\newtheorem{lemma}{Lemma}[section]
\newtheorem{theorem}[lemma]{Theorem}
\newtheorem{corollary}[lemma]{Corollary}
\newtheorem{proposition}[lemma]{Proposition}
\theoremstyle{definition}
\newtheorem{definition}[lemma]{Definition}
\newtheorem{remark}[lemma]{Remark}
\newtheorem{example}[lemma]{Example}
\newtheorem*{theoremA}{Theorem A}
\newtheorem*{theoremB}{Theorem B}
\newtheorem*{theoremC}{Theorem C}
\newtheorem*{corollaryD}{Corollary D}
\renewcommand{\theequation}%
{\arabic{section}.\arabic{lemma}.\arabic{equation}}
\newcommand{\NN}{\ensuremath{\mathbb{N}}} 
\newcommand{\PP}{\ensuremath{\mathbb{P}}} 
\newcommand{\QQ}{\ensuremath{\mathbb{Q}}} 
\newcommand{\RR}{\ensuremath{\mathbb{R}}} 
\newcommand{\sI}{\ensuremath{\kern -1pt \mathscr{I}\kern -2pt}} 
\newcommand{\sJ}{\ensuremath{\kern -2pt \mathscr{J}\kern -2pt}} 
\newcommand{\sZ}{\ensuremath{\mathscr{Z}}}
\newcommand{\sO}{\ensuremath{\mathscr{O}}} 
\newcommand{\sU}{\ensuremath{\mathscr{U}}}
\newcommand{\m}{\ensuremath{\mathfrak{m}}}
\renewcommand{\geq}{\geqslant}
\renewcommand{\leq}{\leqslant}
\DeclareMathOperator{\codim}{codim}
\DeclareMathOperator{\mult}{mult}
\DeclareMathOperator{\Supp}{Supp}
\DeclareMathOperator{\ord}{ord}
\DeclareMathOperator{\Zeroes}{Zeroes}
\DeclareMathOperator{\vol}{vol}
\DeclareMathOperator{\intt}{int}
\DeclareMathOperator{\Eff}{Eff}
\newcommand{\equ}{\ensuremath{\,=\,}}
\newcommand{\deq}{\ensuremath{\stackrel{\textrm{def}}{=}}}
\newcommand{\st}[1]{\ensuremath{\left\{ #1 \right\} }}
\DeclareMathOperator{\Bbig}{Big}
\newcommand{\ybul}{\ensuremath{Y_\bullet}}
\newcommand{\dgeq}{\ensuremath{\,\geq\,}}  
\newcommand{\Bplus}{\ensuremath{\textbf{\textup{B}}_{+} }}
\newcommand{\Bminus}{\ensuremath{\textbf{\textup{B}}_{-} }}
\newcommand{\Bstable}{\ensuremath{\textbf{\textup{B}} }}
\newcommand{\dleq}{\ensuremath{\,\leq\,}} 
\newcommand{\origin}{\ensuremath{\textup{\textbf{0}}}}
\newcommand{\ei}{\ensuremath{\textup{\textbf{e}}_i}}
\newcommand{\etwo}{\ensuremath{\textup{\textbf{e}}_2}}
\newcommand{\eone}{\ensuremath{\textup{\textbf{e}}_1}}
\newcommand{\en}{\ensuremath{\textup{\textbf{e}}_n}}
\newcommand{\dsubseteq}{\ensuremath{\,\subseteq\,}}
\newcommand{\HH}[3]{\ensuremath{H^{#1}\left(#2,#3\right)}}
\newcommand{\nob}[2]{\ensuremath{\Delta_{#1}(#2)}}
\newcommand{\inob}[2]{\ensuremath{\widetilde{\Delta}_{#1}(#2)}}
\newcommand{\iss}[1]{\ensuremath{\Delta^{-1}_{#1}}}
\newcommand{\asym}[2]{\ensuremath{\mult_{#1}\| #2 \|}}
\newcommand{\e}{\ensuremath{\epsilon}}
\newcommand{\movs}[2]{\ensuremath{\e(\| #1\|;#2)}}
\begin{document}

\title[Infinitesimal Newton--Okounkov bodies]{Infinitesimal Newton--Okounkov bodies and jet separation}

\author[A.~K\" uronya]{Alex K\" uronya}
\author[V.~Lozovanu]{Victor Lozovanu}

\address{Alex K\"uronya, Johann-Wolfgang-Goethe Universit\"at Frankfurt, Institut f\"ur Mathematik, Robert-Mayer-Stra\ss e 6-10., D-60325
Frankfurt am Main, Germany}
\address{Budapest University of Technology and Economics, Department of Algebra, Egry J\'ozsef u. 1., H-1111 Budapest, Hungary}
\email{{\tt kuronya@math.uni-frankfurt.de}}

\address{Victor Lozovanu, Universit\'a degli Studi di Milano--Bicocca, Dipartimento di Matematica e Applicazioni, 
Via Cozzi 53,, I-20125 Milano, Italy}
\email{{\tt victor.lozovanu@unimib.it}}

\maketitle

\section*{Introduction}
In this paper we wish to continue the investigations initiated  in \cites{KL14,KL15} to find  a satisfactory theory of positivity for divisors in terms of 
convex geometry. To be more specific, our aim here is to relate local positivity of line bundles to Newton--Okounkov bodies attached to infinitesimal flags. 

Ever since the advent of Newton--Okounkov bodies in projective geometry (see \cites{KKh,LM} or the review \cite{Bou1} for an introduction) the main question 
has been how their geometry is connected to the properties of the underlying polarized variety. For example,  attention has been devoted to the combinatorial study of 
Newton--Okounkov bodies in terms of projective data (see for instance \cites{AKL,KLM1,LSS,PSU}). Nevertheless, in order for these invariants to be really useful in the 
quest for  understanding projective varieties, it is more important to uncover implications in the other direction, that is, one should be able to  gain information about
line bundles in terms of their Newton--Okounkov bodies. 

The hope   for such results comes from  Jow's theorem \cite{Jow} claiming that the function associating to an admissible flag the Newton--Okounkov
body of a given divisor determines the latter up to numerical equivalence. Following our earlier work \cites{KL14,KL15}, we are interested in a local version 
of Jow's principle: we will be mostly concerned with the situation where all flags considered are centered at a given point of the variety. 

Compared to \cite{KL15} we specialize the flags further; as suggested by \cite{KL14}*{Sections 3 \& 4}, one can obtain particulary precise results by taking linear flags 
in the exceptional divisor of the blow-up of the point. This way, we can not only achieve a description of ampleness and nefness in terms of infinitesimal Newton--Okounkov bodies, 
but are also able to extend the  convex geometric interpretation of  moving Seshadri constants described in \cite{KL14}  to  all dimensions.

Let now $X$ be a projective variety over the complex numbers, $L$ a big line bundle, and $x\in X$ a closed point. We say that $L$ is locally positive or locally ample at $x$ 
if there exists a neighbourhood $x\in\sU\subseteq X$ such that the Kodaira map $\phi_{mL}$ restricted to $\sU$ is an embedding for all $m\gg 0$. 
One can of course work with the alternative description provided by global generation of large twists of coherent sheaves (cf. \cite{PAGI}*{Example 1.2.21} and 
\cite{Kur10}*{Proposition 2.7}), in any case both conditions end up being equivalent to $x$ belonging to the complement of the augmented base locus $\Bplus(L)$
of $L$ (see \cite{BCL}*{Theorem A}). 

Once a line bundle $L$ has been proven to be locally positive at a point $x\in X$, one can try to measure the extent of its positivity there.  The traditional way to do 
this is via  the Seshadri constant $\e(L;x)$ introduced by Demailly \cite{D} (see also \cite{PAGI}*{Chapter 5} for a thorough introduction and an extensive bibliography), 
or, in our setting,   its extension, the moving Seshadri constant $\movs{L}{x}$ developed by Nakamaye \cite{Nakamaye}, and studied in much more detail by
Ein--Lazarsfeld--Musta\c t\u a--Nakamaye--Popa \cite{ELMNP2}.

Since one can describe both local ampleness and moving Seshadri constants in terms of infinitesimal Newton--Okounkov bodies, the convex-geometric picture of local positivity appears 
to be complete.  The first main result of our  work is a characterization of ampleness and nefness  in terms of Newton--Okounkov bodies 
(cf. \cite{KL14}*{Theorem A} and \cite{KL15}*{Theorems A \& B}, see also \cite{CHPW}).  

To fix terminology, let  $X$ be a smooth projective variety of dimension $n$, $x\in X$ a closed point, and $\pi\colon X'\to X$ be the blow-up of $X$ at $x$  with exceptional divisor $E$. 
An infinitesimal flag $\ybul$ over $x$ is an admissible flag 
\[
\ybul \ : \ Y_0=X' \ \supseteq \ Y_1=E\ \supseteq \ Y_2 \supseteq \ \ldots \ \supseteq \ Y_n \ ,
\]
where each $Y_i$ is a linear subspace of $E\simeq \PP^{n-1}$ of dimension $n-i$ for each $=2,\ldots, n$.  The Newton--Okounkov body of $\pi^*D$ with respect to $\ybul$ on $X'$ 
will be denoted by $\inob{\ybul}{D}$. For further results  regarding infinitesimal Newton--Okounkov bodies the reader is kindly referred to Section 2. 

\begin{theoremA}(Corollary~\ref{cor:nefness})
Let $X$ be a smooth projective variety of dimension $n$,  $D$ a big $\RR$-divisor on $X$. Then the following are equivalent.
\begin{enumerate}
 \item $D$ is nef.
 \item For every point $x\in X$ there exists an infinitesimal flag $\ybul$ over $x$ such that $\origin\in\inob{\ybul}{D}$.
 \item One has  $\origin\in\inob{\ybul}{D}$ for every infinitesimal flag over $X$. 
\end{enumerate}
\end{theoremA}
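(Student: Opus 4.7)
\emph{Proof plan.} The implication $(3)\Rightarrow(2)$ is trivial, and my strategy for the rest is to reduce the containment $\origin\in\inob{\ybul}{D}$ to a flag-independent asymptotic invariant attached to $x$. Concretely, the plan is to establish the chain of equivalences
\[
 \origin\,\in\,\inob{\ybul}{D} \ \Longleftrightarrow \ \ord_E \|\pi^*D\| \equ 0 \ \Longleftrightarrow \ \asym{x}{D} \equ 0,
\]
from which $(2)\Leftrightarrow(3)$ is immediate, and then to translate the resulting pointwise condition into the nefness of $D$ via the restricted base locus.

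The heart of the argument is the first equivalence. Because $\pi^*D|_E$ is the trivial line bundle on $E\simeq\PP^{n-1}$, a section of $\pi^*(mD)$ that does not vanish along $E$ restricts to a constant on $E$, whose valuation along the linear subflag $Y_2\supseteq\dots\supseteq Y_n$ is the origin in $\RR^{n-1}$. More generally, if $\ord_E(s)\equ k$ then the residual section after removing the $E$-factor lies in $H^0(E,\sO_E(k))$, and its valuation along the successive linear subspaces $Y_i\subseteq E$ is dominated by $k$; iterating gives each coordinate of the valuation bounded by $\ord_E(s)$. Consequently the slice of $\inob{\ybul}{D}$ at $\nu_1\equ 0$ is either empty or equal to $\{\origin\}$, and the body contains $\origin$ precisely when the minimum of the first coordinate on the body vanishes, that is, when $\ord_E\|\pi^*D\|\equ 0$. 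Since $\pi^*$ identifies sections and $\ord_E(\pi^*s)\equ\mult_x(s)$ for every $s\in H^0(X,mD)$, one has $\ord_E\|\pi^*D\|\equ\asym{x}{D}$, a quantity depending only on $x$, so $(2)$ and $(3)$ collapse to the same condition.

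To close the loop with nefness, I would invoke the two standard facts that $D$ is nef if and only if $\Bminus(D)\equ\emptyset$ and that, for a big $\RR$-divisor, $\Bminus(D)\equ\{x\in X\,:\,\asym{x}{D}>0\}$ (see \cite{ELMNP2}). Combining these with the previous step gives $D$ nef $\Leftrightarrow$ $\asym{x}{D}\equ 0$ for every $x\in X$ $\Leftrightarrow$ condition $(3)$. I expect the main subtlety to lie in the handling of the first-coordinate slice: one has to exploit essentially the linearity of the subflag inside $E$, together with the fact that the Newton--Okounkov body of $\sO_{\PP^{n-1}}(k)$ with respect to a linear flag is a standard $k$-simplex, in order to rule out any valuation with first coordinate zero but a nonzero further entry.
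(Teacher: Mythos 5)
Your plan is essentially the paper's: both reduce the condition $\origin\in\inob{\ybul}{D}$ to the pointwise characterization of the restricted base locus $\Bminus(D)$ and then use that $D$ is nef if and only if $\Bminus(D)=\varnothing$. Indeed, Theorem~A is obtained in the paper as an immediate corollary of Theorem~\ref{thm:bminus inf}, which proves exactly the flag-independent equivalence you aim for, with $x\notin\Bminus(D)$ in place of your $\asym{x}{D}=0$. The ``main subtlety'' you flag --- ruling out a valuation vector with $\nu_1=0$ but some later coordinate nonzero --- is precisely Proposition~\ref{prop:inverted} (the containment $\inob{\ybul}{D}\subseteq\iss{\mu(D;x)}$), and the paper's proof likewise exploits this bound, phrased there through the sum function $\sigma$ of \cite{KL15}*{Proposition~2.6} rather than the first coordinate alone.

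One detail that you pass over too quickly: the implication $\asym{x}{D}=0\Rightarrow x\notin\Bminus(D)$ comes from \cite{ELMNP1}*{Proposition~2.8}, whose proof uses multiplier ideals and Nadel vanishing and applies to $\QQ$-divisors. For this reason the paper does not argue with $D$ directly; it fixes a decreasing sequence of ample classes $\alpha_m\to 0$ with $D+\alpha_m$ rational, applies the ELMNP criterion to each $D+\alpha_m$, and then descends to the $\RR$-divisor $D$ via the intersection formula of Lemma~\ref{lem:nested} and the description $\Bminus(D)=\bigcup_m\Bstable(D+\alpha_m)$. Your chain of equivalences should be run through the same approximation; as written, invoking the ELMNP criterion directly for a big $\RR$-divisor is not clearly justified and is the only genuine gap in the proposal.
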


Before we proceed, let us define what we call the inverted standard simplex of size $\xi>0$: this is the convex body 
\[
 \iss{\xi} \deq \text{convex hull of } \st{\origin, \xi\eone,\xi(\eone+\etwo),\dots,\xi(\eone+\en)} \dsubseteq \RR^n \ ,
\]
where $\eone,\dots,\en$ denote the standard basis vectors for $\RR^n$. Lemma~\ref{lem:2inf} and Proposition~\ref{prop:inverted} below explain how the polytopes 
$\iss{\xi}$ arise very naturally in the infinitesimal setting.

\begin{theoremB}(Corollary~\ref{cor:ampleness}) Let $X$ be a smooth projective variety of dimension $n$, $D$ a big $\RR$-divisor on $X$. Then the following are equivalent. 
\begin{enumerate}
 \item $D$ is ample.
 \item For every point $x\in X$ there exists an infinitesimal flag $\ybul$ over $x$ and a real number $\xi>0$ for which $\iss{\xi}\subseteq \inob{\ybul}{D}$.
 \item $\inob{\ybul}{D}$ contains a non-trivial inverted standard simplex for every infinitesimal flag $\ybul$ over $X$. 
\end{enumerate}
\end{theoremB}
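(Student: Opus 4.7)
The plan is to derive Theorem B from the convex-geometric description of moving Seshadri constants supplied by Lemma~\ref{lem:2inf} and Proposition~\ref{prop:inverted}, together with the classical fact that a big $\RR$-divisor $D$ on a smooth projective variety is ample if and only if $\Bplus(D)=\emptyset$, equivalently if and only if $\movs{D}{x}>0$ for every closed point $x\in X$. In this way the three conditions of the statement will all turn into reformulations of the positivity of the moving Seshadri constant at every point.

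The implication $(3)\Rightarrow(2)$ is immediate. For $(2)\Rightarrow(1)$, fix an arbitrary point $x\in X$. By hypothesis there is an infinitesimal flag $\ybul$ over $x$ and a real number $\xi>0$ with $\iss{\xi}\subseteq\inob{\ybul}{D}$. Proposition~\ref{prop:inverted} then gives $\movs{D}{x}\geq \xi>0$, so $x\notin\Bplus(D)$. Since $x$ was arbitrary, $\Bplus(D)=\emptyset$ and $D$ is ample. For $(1)\Rightarrow(3)$, let $\ybul$ be any infinitesimal flag over any point $x\in X$. Because $D$ is ample, $\movs{D}{x}=\e(D;x)>0$. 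Applying Proposition~\ref{prop:inverted} in the other direction shows that every value $\xi$ with $0<\xi<\movs{D}{x}$ yields $\iss{\xi}\subseteq\inob{\ybul}{D}$, and in particular $\inob{\ybul}{D}$ contains a non-trivial inverted standard simplex.

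The real content therefore sits entirely in Proposition~\ref{prop:inverted}, which must be verified before this corollary is deduced: namely, that the supremum of those $\xi\geq 0$ for which the inverted standard simplex $\iss{\xi}$ fits inside $\inob{\ybul}{D}$ equals $\movs{D}{x}$, independently of the choice of linear infinitesimal flag on the exceptional divisor $E\subseteq X'$. The main obstacle I expect is precisely that flag-independence: one must argue that asymptotic vanishing orders along a generic linear flag in $E\simeq\PP^{n-1}$ are controlled by jet separation of $\pi^*D$ at the infinitely near point, so that the tip of the simplex reflects $\ord_E\pi^*D$ while its spread into the positive orthant encodes the moving Seshadri constant. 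Once this correspondence is in place, Theorem B follows formally from the chain of equivalences indicated above, with $(1)\Leftrightarrow \movs{D}{x}>0$ for all $x$ acting as the bridge between projective positivity and convex geometry.
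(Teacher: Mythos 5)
Your high-level strategy --- reduce all three conditions to the positivity of the moving Seshadri constant at every point, using the standard equivalence of ampleness with $\Bplus(D)=\varnothing$ --- is indeed the skeleton of the paper's argument, which proves Theorem B as a corollary of Theorem~\ref{thm:main augmented}. However, the proposal has a serious gap at exactly the point where the work needs to happen.

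You repeatedly invoke Proposition~\ref{prop:inverted} as the engine of the argument, but that proposition says only that
\[
\inob{\ybul}{D}\ \dsubseteq\ \iss{\mu(D;x)}\ ,
\]
i.e.\ it is a one-sided \emph{upper} bound on the Newton--Okounkov body coming from the elementary order-of-vanishing computation in local coordinates. It contains no information whatsoever in the other direction (finding a simplex \emph{inside} the body), and it makes no mention of moving Seshadri constants. In particular, the step ``$\iss{\xi}\subseteq\inob{\ybul}{D}$ implies $\movs{D}{x}\geq\xi$'' cannot follow from Proposition~\ref{prop:inverted}, and the phrase ``applying Proposition~\ref{prop:inverted} in the other direction'' does not describe any valid inference. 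You have, in effect, attributed the deepest result of the paper --- the identity $\xi(D;x)=\movs{D}{x}$, which is Theorem~C / Corollary~\ref{cor:seshadri inverted} --- to a much weaker preliminary lemma, and the last paragraph of your proposal acknowledges that this identity is ``the real content'' but stops at naming it, without supplying a proof.

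The missing content is substantial. For the direction $(2)\Rightarrow(1)$ one must show that an inverted simplex inside $\inob{\ybul}{D}$ forces local positivity, and the paper does this through Proposition~\ref{prop:jet separation}: if $\iss{n+k+\e}\subseteq\inob{\ybul'}{D}$ for all infinitesimal flags $\ybul'$ over $x$, then $K_X+D$ separates $k$-jets at $x$; this relies on Nadel vanishing for asymptotic multiplier ideals on the blow-up together with the $\Bminus$-characterization of Theorem~\ref{thm:bminus inf}. To make the hypothesis ``for all flags'' available, one also needs Proposition~\ref{prop:uniformity for inf flags} (flag-independence of the inverted simplex constant), proved via the slicing theorem and the Fulger--Koll\'ar--Lehmann criterion. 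For the direction $(1)\Rightarrow(3)$ one must actually construct sections achieving the valuation vectors $\origin,\eone,\eone+\ei$; this is Proposition~\ref{prop:not in Bplus implies inf simplex}, which combines $\Bplus(D)=\Bstable(D-A)$ with Lemma~\ref{lem:2inf}. None of these steps is carried out or even correctly located in your proposal, so as written it does not constitute a proof of Theorem~B.
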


Note that as opposed to \cite{KL15}*{Theorem B}, the theorem above  provides a full generalization of what happens in the surface case; its proof is 
significantly more difficult than that of any of its predecessors. 

An interesting feature of the argument leading to Theorem~B is that it passes through 
separation of jets. In fact, an important step in the proof is Proposition~\ref{prop:jet separation} which claims that line bundles whose infinitesimal Newton--Okounkov
bodies contain large inverted standard simplices will separate many jets. Not surprisingly, we will make an extensive use of the circle of ideas around 
jet separation and moving Seshadri constants, and with it, the non-trivial results of 
\cite{ELMNP2}. Another important  ingredient of the proof is an  acute observation of 
Fulger--Koll\'ar--Lehmann \cite{FKL}*{Theorem~A} linking inequalities between volumes of divisors  to augmented base loci.

It follows from our argument that infinitesimal Newton--Okounkov bodies on projective varieties always contain inverted standard simplices at points where 
the divisor is locally ample. Given an infinitesimal flag $\ybul$, the  supremum  of the sizes of all such is called the inverted largest simplex constant, and will be denoted by 
$\xi_{\ybul}(D;x)$. It will turn out that this constant does not depend on the choice of the infinitesimal flag taken, leading to the common value $\xi(D;x)$. 
As a result of our efforts we obtain a description  of moving Seshadri constants in all dimensions (cf. \cite{KL14}*{Theorem D}) 
in the following form.

\begin{theoremC}
Let $D$ be a big $\RR$-divisor on a smooth projective variety $X$, $x\notin\Bplus(D)$. Then 
\[
 \e (\|D\|;x) \equ \xi(D;x)\ .
\]
\end{theoremC}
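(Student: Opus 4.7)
The plan is to establish the two inequalities separately.

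For $\xi(D;x) \leq \e(\|D\|;x)$, suppose $\iss{\xi} \subseteq \inob{\ybul}{D}$ for some infinitesimal flag $\ybul$ and some $\xi > 0$. Proposition~\ref{prop:jet separation} then yields asymptotic jet separation of $D$ at $x$ of order roughly $m\xi$ for $m \gg 0$, and the well-known characterization of the moving Seshadri constant as the asymptotic jet separation rate (via Nakamaye, cf.\ \cite{ELMNP2}) gives $\e(\|D\|;x) \geq \xi$. Taking the supremum over all inverted simplices sitting inside any infinitesimal Newton--Okounkov body of $D$ yields the first inequality.

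For the reverse $\e(\|D\|;x) \leq \xi(D;x)$, fix an infinitesimal flag $\ybul$ and a positive rational $s < \e(\|D\|;x)$; the task is to show $\iss{s} \subseteq \inob{\ybul}{D}$. By the characterization of moving Seshadri constants in \cite{ELMNP2}, the condition $s < \e(\|D\|;x)$ forces $\pi^*D - sE$ to be big with $E \not\subseteq \Bplus(\pi^*D - sE)$. Applying the Lazarsfeld--Musta\c t\u a restriction theorem \cite{LM} in the infinitesimal form developed earlier in the paper, the slice of $\inob{\ybul}{D}$ at first coordinate $x_1 = s$ coincides with the Newton--Okounkov body of $(\pi^*D - sE)|_E \simeq \sO_{\PP^{n-1}}(s)$ with respect to the restricted linear flag $Y_2 \supseteq \ldots \supseteq Y_n$. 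Since this is a flag of linear subspaces in $\PP^{n-1}$, that body is precisely the standard simplex of size $s$ sitting inside the hyperplane $\{x_1 = s\}$.

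Since $x \notin \Bplus(D)$, we have $\asym{x}{D} = 0$, so $\origin \in \inob{\ybul}{D}$. Convexity of the Newton--Okounkov body then implies that the convex hull of $\origin$ with the slice at $x_1 = s$ lies inside $\inob{\ybul}{D}$; a direct computation identifies this convex hull with $\iss{s}$. Letting $s \to \e(\|D\|;x)$ concludes the argument, and as a by-product confirms that $\xi_{\ybul}(D;x)$ is independent of the flag chosen.

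The main obstacle is justifying the slicing statement in the second paragraph: one must control the behaviour of the restricted linear series $H^0(m(\pi^*D - sE)) \to H^0(\sO_E(ms))$ as the twist varies with $s$, and this hinges precisely on the condition $E \not\subseteq \Bplus(\pi^*D - sE)$ provided by the ELMNP characterization. Once this structural fact is available, the remainder of the argument is a convex-geometric formality.
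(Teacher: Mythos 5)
Your argument for $\xi(D;x)\leq\e(\|D\|;x)$ is essentially the paper's: feed large inverted simplices into Proposition~\ref{prop:jet separation} to get jet separation for adjoints, then use the jet--separation characterization of moving Seshadri constants (\cite{ELMNP2}*{Proposition 6.6}) and the continuity theorem to absorb the $K_X$ offset. One small omission: Proposition~\ref{prop:jet separation} requires the simplex to sit inside $\inob{\ybul'}{D}$ for \emph{every} infinitesimal flag $\ybul'$, whereas you start from a single flag; you need to invoke Proposition~\ref{prop:uniformity for inf flags} to upgrade ``some'' to ``all''.

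The real issue is in the reverse inequality. Your plan is: for rational $s<\e(\|D\|;x)$ show $E\not\subseteq\Bplus(\pi^*D-sE)$, apply the slicing theorem to identify the slice $\inob{\ybul}{D}\cap\{x_1=s\}$ with the restricted Newton--Okounkov body $\Delta_{\ybul|E}(\pi^*D-sE)$, and then claim this is the \emph{full} standard simplex $\Delta_s^{n-1}$. The gap is in the last step. The condition $E\not\subseteq\Bplus(\pi^*D-sE)$ only licenses the slicing theorem of \cite{LM}, i.e.\ it identifies the slice with the restricted body; it does \emph{not} tell you that the restricted body equals the Newton--Okounkov body of the complete series $|\sO_{\PP^{n-1}}(ms)|$. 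That would require asymptotic surjectivity of the restriction maps $H^0(X',m(\pi^*D-sE))\to H^0(E,\sO_E(ms))$, which is precisely separation of $ms$-jets by $mD$ --- a strictly stronger input than $E\not\subseteq\Bplus$. You even flag this issue yourself, but then attribute its resolution to the augmented base locus condition, which is where the argument breaks down. The fix is to use \cite{ELMNP2}*{Proposition 6.6} directly: $s<\e(\|D\|;x)$ gives jet separation, hence the surjectivity you need. The paper avoids the slicing machinery altogether: once $D$ separates $s$-jets, it picks local coordinates $u_1,\dots,u_n$ adapted to the flag, takes the monomials $u_i^s$ as sections of $D$, and reads off their valuation vectors $s\,\eone$ and $s(\eone+\ei)$ by the same computation as in Lemma~\ref{lem:2inf}; together with $\origin\in\inob{\ybul}{D}$ this directly exhibits $\iss{s}$ inside the body, then Proposition~\ref{prop:uniformity for inf flags} transports the statement to all flags. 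This is shorter and bypasses the surjectivity issue entirely. Finally, the paper also carefully reduces the $\RR$-divisor case to the $\QQ$-divisor case via Lemma~\ref{lem:nested} and continuity of moving Seshadri constants; your proposal does not address the $\RR$ case at all.
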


Beside providing an alternative way of defining moving Seshadri constants, the largest inverted simplex constant has other benefits as well. Via Theorem~\ref{thm:bminus inf} and 
Theorem~\ref{thm:main augmented} it explains quite clearly why $\movs{D}{x}=0$  for a divisor $D$ with $x\in\Bplus(D)\setminus\Bminus(D)$. 

An interesting by-product of our result is a statement about the existence of global sections with prescribed vanishing behaviour. From the definition of Newton--Okounkov bodies 
it is a priori quite unclear which rational points arise as actual images of global sections, and in general it is very difficult to decide when it comes to boundary points. 
As it turns out, for infinitesimal Newton--Okounkov bodies the situation is more amenable.

\begin{corollaryD}(Corollary~\ref{cor:valuative})
Let $D$ be a big $\QQ$-divisor on $X$, $x\in X$ a closed point, and  $Y_{\bullet}$ an infinitesimal flag over  $x$. If 
$\Delta_{\xi}^{-1}\subseteq \Delta_{Y_{\bullet}}(\pi^*(D))$ for some $\xi>0$, then all vectors in $\Delta_{\xi}^{-1}\cap\QQ^n$
not lying on  the face generated by  the points $\lambda\cdot\eone,\lambda(\eone+\etwo), \ldots , \lambda(\eone+\en)$ are valuative. 
\end{corollaryD}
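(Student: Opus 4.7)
The plan is to exploit jet separation to construct sections with prescribed local behaviour at $x$, and then to read off the desired infinitesimal valuation in a coordinate chart adapted to the flag $\ybul$.

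First, I would choose local coordinates $z_1, \ldots, z_n$ at $x$ such that, in the affine chart of $X' = \Bl_x X$ with coordinates $(u_1, v_2, \ldots, v_n)$ given by $z_1 = u_1$ and $z_i = u_1 v_i$ for $i \geq 2$, the flag $\ybul$ becomes the standard one: $Y_i$ is cut out by $u_1 = v_2 = \cdots = v_i = 0$ for each $i \geq 1$. Such a choice is possible because $Y_2 \supseteq \cdots \supseteq Y_n$ is a complete linear flag in $E \simeq \PP(T_xX)$. A direct computation then shows that the monomial $z_1^{a_1} z_2^{a_2} \cdots z_n^{a_n}$ pulls back to $u_1^{a_1 + \cdots + a_n} v_2^{a_2} \cdots v_n^{a_n}$, and hence has $\ybul$-valuation $(a_1 + \cdots + a_n, a_2, \ldots, a_n)$.

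Now fix a rational point $\nu = (\nu_1, \ldots, \nu_n) \in \iss{\xi} \cap \QQ^n$ with $\nu_1 < \xi$, and pick $\xi' \in \QQ$ with $\nu_1 < \xi' < \xi$. By Proposition~\ref{prop:jet separation} together with the interpretation of $\movs{D}{x}$ as the asymptotic jet-separation rate from \cite{ELMNP2}, for every sufficiently large and sufficiently divisible $m$ with $m\nu \in \ZZ^n$, the linear series $|mD|$ separates $m\nu_1$-jets at $x$. I then pick $s \in H^0(X, mD)$ whose Taylor expansion at $x$ matches the monomial $z_1^{m\nu_1 - m\nu_2 - \cdots - m\nu_n} z_2^{m\nu_2} \cdots z_n^{m\nu_n}$ modulo $\m_x^{m\nu_1 + 1}$; note that the exponent $m\nu_1 - m\nu_2 - \cdots - m\nu_n$ is non-negative precisely because $\nu$ lies in $\iss{\xi}$.

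To finish, I would compute $\nu_{\ybul}(\pi^* s)$ in the chart. Since every element of $\m_x^{m\nu_1+1}$ pulls back to a multiple of $u_1^{m\nu_1+1}$, one has $\pi^* s = u_1^{m\nu_1} v_2^{m\nu_2} \cdots v_n^{m\nu_n} + u_1^{m\nu_1+1} g$ for some regular function $g$; thus $\ord_E(\pi^* s) = m\nu_1$, and after dividing by $u_1^{m\nu_1}$ and restricting to $E$ one obtains exactly $v_2^{m\nu_2} \cdots v_n^{m\nu_n}$, whose valuation along $Y_2 \supseteq \cdots \supseteq Y_n$ is $(m\nu_2, \ldots, m\nu_n)$. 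Therefore $\nu_{\ybul}(\pi^* s) = m\nu$, proving that $\nu$ is valuative.

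The main obstacle is the quantitative input from Proposition~\ref{prop:jet separation}: one needs that the inclusion $\iss{\xi} \subseteq \inob{\ybul}{D}$ forces the jet-separation order of $mD$ at $x$ to grow asymptotically like $m\xi$, rather than merely being positive. The strict inequality $\nu_1 < \xi$ is exactly what provides the slack to invoke this separation at level $m\nu_1$ for all large $m$; on the outer face where $\nu_1 = \xi$, the same argument would demand a section whose vanishing order at $x$ saturates $\movs{D}{x}$, which is typically unavailable, explaining why this face falls outside the corollary.
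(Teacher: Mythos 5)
Your proof is correct, and it runs along essentially the same lines as the paper's: the inclusion $\iss{\xi}\subseteq\inob{\ybul}{D}$ forces $\movs{D}{x}=\xi(D;x)\geq\xi$ by Proposition~\ref{prop:seshadri inverted}, so that $\lim_m s(mD;x)/m\geq\xi$, and a chart adapted to the flag lets one read off the infinitesimal valuation of a section with prescribed leading jet. The monomial bookkeeping $(z_1^{a_1}\cdots z_n^{a_n}\mapsto(a_1+\cdots+a_n,a_2,\ldots,a_n))$ and the identification of the excluded face with $\{\nu_1=\xi\}$ are both right, and the slack $\nu_1<\xi$ is indeed what makes $s(mD;x)\geq m\nu_1$ available for all large $m$ in the relevant arithmetic progression.

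The one genuine (if minor) divergence from the paper is organizational. The paper only produces sections realizing the rational points on the edges $[\origin,\lambda(\eone+\ei)]$, $\lambda<\xi$ (exactly as in the second half of the proof of Proposition~\ref{prop:seshadri inverted}), and then appeals to Lemma~\ref{lem:valuative}, i.e.\ the multiplicativity of $\nu_\ybul$, to propagate valuativity to every rational point of $\iss{\lambda}$. You instead construct a section hitting each rational point directly, which bypasses Lemma~\ref{lem:valuative} at the price of invoking jet separation at the precise level $m\nu_1$ rather than merely at the vertices. Either route is legitimate; yours is more explicit, the paper's is more economical given that Lemma~\ref{lem:valuative} is already available. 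As a small remark, your auxiliary $\xi'$ plays no actual role once you observe $s(mD;x)/m\to\movs{D}{x}\geq\xi>\nu_1$, so it could be dropped.
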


Finally, a somewhat tentative side remark regarding moving Seshadri constants and asymptotic multiplicities. For a given point $x\in X$, the loci of $\RR$-divisor classes 
in $\Bbig(X)$ where $\movs{D}{x}$ and $\asym{x}{D}$ are naturally defined are complementary, and we point out that one can glue these functions to a unique one via
\[
 \e_x(D) \deq \begin{cases} \movs{D}{x} & \text{ if } x\notin\Bplus(D) \\
			    0           & \text{ if } x\in\Bplus(D)\setminus\Bminus(D) \\
			    -\asym{x}{D} & \text{ if } x\in\Bminus(D)\ ,
              \end{cases}
\]
which ends up being homogeneous of degree one and  continuous on the big cone, while examples suggest that one can hope for $\e_x$ to be concave. We believe that $\e_x$ could prove useful as an extension of the moving Seshadri constant function by being capable of  distinguishing between divisor classes $D$ with $x\in\Bplus(D)\setminus\Bminus(D)$ and $x\in\Bminus(D)$. In the end we discuss an example where the Seshadri function is not everywhere differentiable on the ample cone.

A few words about the organization of the paper. We begin in Section 1 by fixing notation and collecting useful facts about asymptotic base loci, Newton--Okounkov bodies, 
and moving Seshadri constants, in Section 2 we present some important observations about infinitesimal Newton--Okounkov bodies. The characterization of restricted 
base loci is given in Section 3, while Section 4 is devoted to the main part of the paper, the description of augmented base loci in terms of Newton--Okounkov bodies 
with the help of separation of jets. Lastly, Section 5 hosts the discussion on Seshadri functions.

\smallskip 
\paragraph*{\bf Acknowledgements} We are grateful to Mihnea Popa for helpful discussions, and to the  Deutsche Bahn, the \"Osterreichische Bundesbahn, the SNCF and Thello
for providing us with  excellent working conditions. % Tomek Szemberg for EKL, 

\section{Notation and preliminaries}

\subsection{Notation}

We work over the complex number field, $X$ will stand for a projective variety of dimension $n$  which will often taken to be smooth. The point $x\in X$ will always be assumed a smooth point, while all points on varieties are taken to be closed. A divisor is always Cartier, whether it is integral, $\QQ$- , or $\RR$-Cartier and $D$ will denote a big divisor without exception. 

If $F$ is an effective $\RR$-Cartier divisor on $X$, then we write 
\[
 \mu_F(D) \equ \mu(D;F) \deq \sup\st{t>0\mid \text{$D-tF$ is big}}\ .
\]
Furthermore, if $Z\subseteq X$ is a smooth subvariety, then denote by
\[
 \mu_Z(D) \equ \mu(D;Z) \deq \mu(\pi^*D;E)\ ,
\]
where $\pi\colon X'\to X$ denotes the blow-up of $X$ along $Z$ with exceptional divisor $E$. 

\begin{remark}
Based on the definition of moving Seshadri constant given below, it is not hard to see that $0<\epsilon(\|D\|;x) \leq \mu(D;x)$. 
\end{remark}

\subsection{Asymptotic base loci}

Following \cite{ELMNP1}, one defines the restricted base locus of a big $\RR$-divisor $D$ as 
\[
\Bminus(D) \deq  \bigcup_{A} \Bstable(D+A)\ ,
\]
where the union is taken over all ample divisors $A$, such that $D+A$ is a $\QQ$-divisor. This locus is   a countable union of subvarieties of $X$  by   \cite{ELMNP1}*{Proposition~1.19}
\[
\Bminus(D) \equ \bigcup_{m\in\NN}\textbf{B}(D+\frac{1}{m}A)\ .
\]
The augmented base locus of $D$ is defined by 
\[
\Bplus(D)\deq  \bigcap_{A}\Bstable(D-A)\ ,
\]
where the intersection is taken again over all ample divisors $A$, such that $D+A$ is a $\QQ$-divisor. It follows quickly from \cite{ELMNP1}*{Proposition 1.5} that 
$\Bplus(D)=\Bstable (D-\frac{1}{m}A)$ for all $m\gg 0$ and any fixed ample class $A$.

\begin{proposition}\label{prop:openclosed}
Let $X$ be a  projective variety, $x\in X$ an arbitrary point. Then 
\begin{enumerate}
 \item $B_+(x) \deq \st{\alpha\in N^1(X)_\RR\mid x\in \Bplus(\alpha)} \dsubseteq N^1(X)_\RR$ is closed, 
 \item $B_-(x) \deq \st{\alpha\in N^1(X)_\RR\mid x\in \Bminus(\alpha)} \dsubseteq N^1(X)_\RR$ is open, 
\end{enumerate}
 both with respect to the metric topology of $N^1(X)_\RR$.  
\end{proposition}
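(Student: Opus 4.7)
The plan is to derive both statements in parallel from the monotonicity of the asymptotic base loci together with two standard alternative descriptions. The monotonicity I need is that for any ample $\RR$-class $A$,
\[
\Bplus(E + A) \dsubseteq \Bplus(E) \quad\text{and}\quad \Bminus(E + A) \dsubseteq \Bminus(E).
\]
Both follow at once from the definitions: if $A''$ is ample then so is $A + A''$, hence every stable base locus $\Bstable(E - A'')$ appearing in the intersection (resp.~union) defining $\Bplus(E)$ (resp.~$\Bminus(E)$) reappears as $\Bstable((E+A) - (A+A''))$ in the one for $\Bplus(E + A)$ (resp.~$\Bminus(E+A)$). I would then record two companion descriptions. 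First, for any fixed ample $A$ and all sufficiently small $\epsilon > 0$,
\[
\Bplus(D) \equ \Bplus(D - \epsilon A),
\]
combining the identity $\Bplus(D) = \Bstable(D - \epsilon A)$ from \cite{ELMNP1}*{Proposition~1.5} (stated in the excerpt above) with the monotonicity above applied to $D - \epsilon A$. Second,
\[
\Bminus(D) \equ \bigcup_{A \text{ ample}} \Bplus(D + A),
\]
obtained from the inclusions $\Bstable(D+A) \dsubseteq \Bplus(D+A) \dsubseteq \Bstable(D + A/2)$ whose extremes both lie among the terms of the union $\bigcup_{A'}\Bstable(D+A')$.

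For closedness of $B_+(x)$, I would assume $x \notin \Bplus(D)$, pick a fixed ample $A$ and $\epsilon > 0$ with $\Bplus(D - \epsilon A) = \Bplus(D)$, and set $E \deq D - \epsilon A$. For any $D'$ in a sufficiently small neighborhood of $D$ in $N^1(X)_\RR$, the class $\epsilon A + (D' - D)$ is a small perturbation of the ample class $\epsilon A$ and hence lies in the open ample cone. Writing $D' = E + \bigl(\epsilon A + (D' - D)\bigr)$ and applying monotonicity then gives $\Bplus(D') \dsubseteq \Bplus(E) \not\ni x$, showing that the complement of $B_+(x)$ is open.

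For openness of $B_-(x)$, given $x \in \Bminus(D)$ I would use the characterization above to pick an ample class $A_0$ with $x \in \Bplus(D + A_0)$ and set $E_0 \deq D + A_0$. For $D'$ close to $D$ the class $E_0 - D' = A_0 + (D - D')$ is again a small ample perturbation, and the identity $D' + (E_0 - D') = E_0$ combined with the characterization yields $x \in \Bplus(E_0) = \Bplus(D' + \text{ample}) \dsubseteq \Bminus(D')$, as required.

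The only technical obstacle is the bookkeeping needed to ensure that the sums of $\RR$-classes with ample perturbations appearing in the definitions are $\QQ$-Cartier, so that the stable base loci $\Bstable$ genuinely make sense. This is handled by fixing the reference ample class to be an ample $\QQ$-divisor and absorbing any irrationality of $D$ into the perturbation; with that in mind the entire argument reduces to the openness of the ample cone in $N^1(X)_\RR$ combined with the two monotonicity properties stated at the outset.
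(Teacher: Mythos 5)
Your argument is correct, and it is essentially the standard one: the paper itself does not prove this proposition but defers to \cite{ELMNP1} and \cite{KL15}, where the proof runs exactly along your lines (perturb by a small ample class, use monotonicity of the asymptotic base loci under adding amples, and the identities $\Bplus(D)=\Bplus(D-\epsilon A)$ and $\Bminus(D)=\bigcup_{A}\Bplus(D+A)$). One small slip worth fixing: your one-sentence justification of the two monotonicity inclusions is literally accurate only for $\Bplus$. For $\Bplus(E+A)\subseteq\Bplus(E)$ it is indeed enough that every term $\Bstable(E-A'')$ of the \emph{intersection} defining $\Bplus(E)$ reappears among the terms defining $\Bplus(E+A)$, since enlarging the index set of an intersection shrinks it. For $\Bminus(E+A)\subseteq\Bminus(E)$ the containment of index sets must go the other way: each term $\Bstable((E+A)+A'')=\Bstable(E+(A+A''))$ of the \emph{union} defining $\Bminus(E+A)$ is a term of the union defining $\Bminus(E)$. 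As written (``every term of the one for $\Bminus(E)$ reappears in the one for $\Bminus(E+A)$'') you would be deriving the reverse inclusion, which for unions is the wrong direction and is in fact false, since $\Bstable(E+A'')$ with $A''$ small need not occur in the union for $\Bminus(E+A)$. This is a trivial repair; the remainder of the argument, including the reduction of the rationality constraints to the density of rational ample classes, is sound.
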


For further references and relevant properties of restricted/augmented base loci, we refer the reader to \cites{ELMNP1,KL15}, including the proof to Proposition~\ref{prop:openclosed}.

\subsection{Newton--Okounkov bodies}

Newton--Okounkov bodies have been introduced to projective geometry by Lazarsfeld--Musta\c t\u a \cite{LM} and Kaveh--Khovanskii \cite{KKh} motivated by 
earlier work of Okounkov in representation theory \cite{Ok}.  For a big $\RR$-divisor $D$ on $X$, $\nob{\ybul}{D}$ stands for the Newton--Okounkov body of $D$ 
with respect to the admissible flag $\ybul$, where 
\[
 \ybul \ \colon \ X\equ \ Y_0 \ \supseteq \ Y_1 \ \supseteq \ \ldots \ \supseteq \ Y_n
\]
is a full flag of (irreducible) subvarieties $Y_i\subseteq X$ with $\codim_X Y_i=i$ and the property that $Y_i$ is smooth at the point $Y_n$ for all $0\leq i\leq n$. 
In particular, if $X$ is only assumed to be projective, the center $Y_n=\st{x}$ of an admissible flag must be a smooth point. 

\begin{remark}(Geometry of $\nob{\ybul}{D}$)
In low dimensions the geometry of $\nob{\ybul}{D}$ is well-understood: for curves $\nob{\ybul}{D}=[0,\deg D]\subseteq \RR$ is a line segment (\cite{LM}*{Example 1.13}); 
in the case of surfaces  variation of Zariski decomposition \cite{BKS} leads to the fact that Newton--Okounkov bodies are polygons with rational slopes 
(see \cite{LM}*{Theorem 6.4} and   \cite{KLM1}*{Section 2}). 

Note  that in dimensions three and above, the situation is no longer purely combinatorial: $\nob{\ybul}{D}$ can be non-polyhedral  even if $D$ is ample and $X$ is a Mori dream  
space. At the same time finite generation of the section ring of $D$ ensures the existence of flags with respect to which $\nob{\ybul}{D}$ is a rational simplex (see
\cite{AKL}).  
\end{remark}

Next,  we quickly recall a few notions and useful facts from \cite{KL15} without proof. 

\begin{proposition}[Equivalent definition of Newton-Okounkov bodies]\label{prop:definition}
Let $\xi\in\textup{N}^1(X)_{\RR}$ be a big $\RR$-class and $Y_{\bullet}$ be an admissible flag on $X$. Then
\[
\Delta_{Y_{\bullet}}(\xi) \ = \ \textup{closed convex hull of\ }\{ \nu_{Y_{\bullet}}(D) \ | \ D\in \textup{Div}_{\geq 0}(X)_{\RR}, D\equiv \xi\},
\]
where the valuation $\nu_{Y_{\bullet}}(D)$, for an effective $\RR$-divisor $D$, is constructed inductively as in the case of integral divisors.
\end{proposition}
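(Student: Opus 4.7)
This is a reformulation of the Newton--Okounkov body of a big $\RR$-class as the closed convex hull of valuative vectors coming from effective $\RR$-divisor representatives of that class. My plan is to prove the two inclusions separately, reducing each to the $\QQ$-divisor case (where the claim is nearly immediate from the original construction of Lazarsfeld--Musta\c t\u a) and then extending to $\RR$-divisors via continuity of Newton--Okounkov bodies on the big cone.

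For the inclusion $\supseteq$, I would first treat the case of an effective $\QQ$-divisor $D \equiv \xi$: choosing $m \in \NN$ with $mD$ integral, the canonical section of $\sO_X(mD)$ with zero scheme $mD$ has valuation $m\,\nu_{Y_\bullet}(D)$, so $\nu_{Y_\bullet}(D) \in \frac{1}{m}\Delta_{Y_\bullet}(mD) = \Delta_{Y_\bullet}(\xi)$ by homogeneity and numerical invariance of Newton--Okounkov bodies in the rational setting. For a general effective $\RR$-divisor $D \equiv \xi$, I would rationally approximate its coefficients to obtain effective $\QQ$-divisors $D_k \to D$ with $[D_k] \to \xi$ in $\textup{N}^1(X)_{\RR}$; the previous step places $\nu_{Y_\bullet}(D_k)$ inside $\Delta_{Y_\bullet}([D_k])$, and closedness of the target body together with continuity of Newton--Okounkov bodies on the big cone and of $\nu_{Y_\bullet}$ in the divisor coefficients yields $\nu_{Y_\bullet}(D) \in \Delta_{Y_\bullet}(\xi)$ in the limit.

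For the inclusion $\subseteq$, assume first that $\xi$ is rational and fix an integral divisor $L$ with $L \equiv c\xi$, $c \in \QQ_{>0}$. By the defining description of \cite{LM}, the rational vectors $\frac{1}{mc}\nu_{Y_\bullet}(s)$ with nonzero $s \in H^0(X, mL)$ are dense in $\Delta_{Y_\bullet}(\xi)$, and each such vector coincides with $\nu_{Y_\bullet}\bigl(\frac{1}{mc}\mathrm{div}(s)\bigr)$, where $\frac{1}{mc}\mathrm{div}(s)$ is an effective $\QQ$-divisor linearly, hence numerically, equivalent to $\xi$. For a general big class $\xi$, I would approximate by rational big classes $\xi_k \to \xi$, apply the rational case to produce effective $\QQ$-divisors $D_k \equiv \xi_k$ whose valuations populate $\Delta_{Y_\bullet}(\xi_k)$, and then perturb each $D_k$ to an effective $\RR$-divisor $\widetilde D_k \equiv \xi$ with close valuation.

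The main obstacle is precisely this last perturbation. Since $\xi$ is big, I would fix an ample $\QQ$-class $A$ and an effective $\RR$-divisor $E$ with $\xi \equiv A + E$, and set $\widetilde D_k = (1 - \epsilon_k)D_k + G_k$ with $\epsilon_k \to 0$ chosen slightly more slowly than $\|\xi - \xi_k\|$, so that the class $\xi - (1 - \epsilon_k)\xi_k$ lies near $\epsilon_k \xi$ inside the big cone and admits an effective $\RR$-divisor representative $G_k$ whose coefficients tend to zero. Continuity of $\nu_{Y_\bullet}$ under such small perturbations then yields $\nu_{Y_\bullet}(\widetilde D_k) \to \nu_{Y_\bullet}(D_k)$, and combining this with continuity of $\Delta_{Y_\bullet}$ on the big cone completes the argument.
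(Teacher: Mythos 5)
The paper does not actually supply a proof of this proposition: it appears in the block introduced by ``Next, we quickly recall a few notions and useful facts from \cite{KL15} without proof,'' so a direct comparison with the authors' argument is not possible; what follows is a critique of your attempt on its own merits.

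Your $\supseteq$ direction is sound: for an effective $\QQ$-divisor $D\equiv\xi$ the canonical section of $\sO_X(mD)$ gives $\nu_{Y_\bullet}(D)\in\Delta_{Y_\bullet}(\xi)$, and for an effective $\RR$-divisor $D=\sum a_iD_i$ the valuation vector depends piecewise-linearly (indeed, linearly once the support is fixed) on the coefficients $a_i$, so rational approximation of the $a_i$ together with Hausdorff continuity of $\Delta_{Y_\bullet}(\cdot)$ on the big cone finishes the inclusion. The rational case of the $\subseteq$ direction is also correct as you state it.

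The gap is in the very last step of $\subseteq$. You assert that the big class $\gamma_k:=\xi-(1-\epsilon_k)\xi_k$, which tends to $0$, ``admits an effective $\RR$-divisor representative $G_k$ whose coefficients tend to zero.'' This is not automatic: a sequence of small big classes need not possess effective representatives supported on a fixed finite set of prime divisors, so ``coefficients tend to zero'' has no uniform meaning, and even with bounded support the inference $\nu_{Y_\bullet}(G_k)\to 0$ needs an argument, since the flag may sit badly with respect to the $G_k$. Without control on $\nu_{Y_\bullet}(G_k)$, the claim $\nu_{Y_\bullet}(\widetilde D_k)\to\nu_{Y_\bullet}(D_k)$ does not follow, so the final inclusion is not established.

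The standard way to close this gap is both simpler and makes the perturbation exact. Choose the rational approximations $\xi_k\to\xi$ so that $A_k:=\xi-\xi_k$ is an \emph{ample} $\QQ$-class for every $k$ (this is possible: the set of $\alpha$ with $\xi-\alpha$ ample is open, so it meets $\QQ$-classes arbitrarily close to $\xi$, and bigness of $\xi_k$ holds for $k\gg 0$). Since $A_k$ is ample, one can pick an effective $\RR$-divisor $M_k\equiv A_k$ whose support avoids the center $Y_n$ of the flag; then $\nu_{Y_\bullet}(M_k)=\origin$ by the inductive construction. For any effective $\QQ$-divisor $D_k\equiv\xi_k$, the divisor $\widetilde D_k:=D_k+M_k$ is effective, $\widetilde D_k\equiv\xi$, and $\nu_{Y_\bullet}(\widetilde D_k)=\nu_{Y_\bullet}(D_k)$ exactly (additivity of the flag valuation on effective $\RR$-divisors). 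Hence the valuation vectors realizing $\Delta_{Y_\bullet}(\xi_k)$ all lie in the set on the right-hand side for $\xi$ itself, so $\Delta_{Y_\bullet}(\xi_k)\subseteq\overline{\mathrm{conv}}\{\nu_{Y_\bullet}(D):D\geq 0,\ D\equiv\xi\}$, and letting $k\to\infty$ with Hausdorff continuity of $\Delta_{Y_\bullet}$ finishes $\subseteq$. In short, replace ``perturb by something small'' with ``add something ample that the flag center does not see''; this removes the need for any quantitative control on $G_k$.
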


\begin{proposition}\label{prop:compute}
Suppose $\xi$ is a big $\RR$-class and $Y_{\bullet}$ is an admissible flag on $X$. Then for any $t\in [0,\mu_{Y_1}(\xi))$, we have
\[
\Delta_{Y_{\bullet}}(\xi)_{\nu_1\geq t} \ = \ \Delta_{Y_{\bullet}}(\xi-tY_1)\ + t\eone,
\]
where $\eone=(1,0,\ldots ,0)\in \RR^n$.
\end{proposition}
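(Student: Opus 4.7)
The plan is to establish both set-theoretic inclusions, leveraging the valuative characterization from Proposition~\ref{prop:definition}; the inclusion $\supseteq$ is straightforward, whereas the reverse requires comparing closed convex hulls under a half-space slice and is where the main technical work lies. For the easy inclusion, given any effective $\RR$-divisor $D'\equiv \xi-tY_1$, the divisor $D\deq D'+tY_1$ is effective and numerically equivalent to $\xi$. Since the first coordinate of $\nu_{\ybul}$ records the coefficient of $Y_1$ in the divisor while the subsequent coordinates depend only on the restriction of $D-\ord_{Y_1}(D)\cdot Y_1$ to $Y_1$, one checks directly that $\nu_{\ybul}(D)=\nu_{\ybul}(D')+t\eone$. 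Taking closed convex hulls and invoking Proposition~\ref{prop:definition}, the translate $\nob{\ybul}{\xi-tY_1}+t\eone$ embeds into $\nob{\ybul}{\xi}$, and it lies in the closed half-space $\{\nu_1\geq t\}$ automatically.

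For the reverse inclusion I would first reduce to the case of an integral divisor $D$ and rational $t$, choosing $m$ sufficiently divisible so that $mt\in\ZZ$. The key observation is that multiplication by the canonical section cutting out $mtY_1$ realizes $H^0(X,\sO_X(m(D-tY_1)))$ as exactly the subspace of $H^0(X,\sO_X(mD))$ of sections vanishing along $Y_1$ to order at least $mt$, and this map shifts the Okounkov valuation of every such section by precisely $mt\eone$. At the level of graded semigroups of sections, the valuations with $\nu_1\geq mt$ therefore correspond bijectively, after subtracting $mt\eone$, with valuations of sections of $m(D-tY_1)$. Normalizing by $m$ and passing to the limit yields the equality for integral $D$ and rational $t$; continuity of Newton--Okounkov bodies on the big cone (together with the fact that $t<\mu_{Y_1}(\xi)$ keeps $\xi-tY_1$ big throughout the approximation) then promotes the identity to arbitrary big $\RR$-classes.

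The main obstacle I anticipate is that an arbitrary point of $\nob{\ybul}{\xi}_{\nu_1\geq t}$ is only a limit of convex combinations of valuations $\nu_{\ybul}(D)$, and some of the constituent valuations may have first coordinate strictly less than $t$; this prevents a naive direct argument from Proposition~\ref{prop:definition} alone. The integral-divisor, rational-$t$ reduction bypasses this difficulty because on the level of the discrete valuation semigroup of sections the correspondence with $H^0(m(D-tY_1))$ is exact and respects slicing. The final continuity step in the big cone must then be carried out with care to ensure that both sides of the claimed identity deform continuously as $\xi$ and $t$ vary inside their common domain of definition.
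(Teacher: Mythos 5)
The paper itself contains no proof of Proposition~\ref{prop:compute}: it is one of the facts explicitly ``recalled from \cite{KL15} without proof,'' so there is no in-paper argument to measure yours against. I will therefore assess your proposal on its own.

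Your easy inclusion is correct: for an effective $\RR$-divisor $D'\equiv\xi-tY_1$, the divisor $D'+tY_1$ is effective, numerically equivalent to $\xi$, and satisfies $\nu_{\ybul}(D'+tY_1)=\nu_{\ybul}(D')+t\eone$ because $\ord_{Y_1}(D'+tY_1)=\ord_{Y_1}(D')+t$ while the residual divisor restricted to $Y_1$ is unchanged. Passing to closed convex hulls and intersecting with the closed half-space $\{\nu_1\geq t\}$ gives $\nob{\ybul}{\xi-tY_1}+t\eone\subseteq\nob{\ybul}{\xi}_{\nu_1\geq t}$. Your reduction to integral $D$ and rational $t$, and the correspondence via multiplication by the canonical section of $mtY_1$, are also the standard and correct moves.

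The genuine gap is in the step ``normalizing by $m$ and passing to the limit yields the equality.'' What the semigroup correspondence gives you is an equality of \emph{subsets} of $\QQ^n$ before taking closures: the set of normalized valuation vectors of sections of multiples of $D$ having first coordinate $\geq t$ equals the set of normalized valuation vectors for $D-tY_1$, translated by $t\eone$. Taking closures of both sides you obtain
\[
\overline{\bigl\{\tfrac{1}{m}\nu_{\ybul}(s)\ :\ s\in H^0(mD),\ \nu_1(s)\geq mt\bigr\}} \ =\ \nob{\ybul}{D-tY_1}+t\eone\ ,
\]
but the left-hand side is a priori only \emph{contained} in $\nob{\ybul}{D}\cap\{\nu_1\geq t\}$; closure and intersection with a half-space do not commute in general, which is exactly the ``main obstacle'' you name. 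Your assertion that the semigroup picture ``bypasses this difficulty because the correspondence is exact and respects slicing'' does not resolve it: the difficulty arises precisely in passing from the sliced semigroup to the sliced closed cone, and exactness of the correspondence at the semigroup level is not, by itself, the missing ingredient.

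To close the gap one needs an additional convexity argument combined with the fact that rational interior points are valuative (Lemma~\ref{lem:valuative}). Concretely: since $t<\mu_{Y_1}(\xi)$, the class $\xi-tY_1$ is big, so the right-hand side $\nob{\ybul}{\xi-tY_1}+t\eone$ is a full-dimensional convex body contained in the left-hand side; in particular $\nob{\ybul}{\xi}\cap\{\nu_1>t\}$ meets $\intt\nob{\ybul}{\xi}$. For a convex body $K$ with nonempty interior and a half-space $H=\{\nu_1\geq t\}$ meeting $\intt K$, one has $K\cap H=\overline{\intt K\cap\{\nu_1>t\}}$: given $v\in K\cap H$, pick $w\in\intt K$ with $w_1>t$, and note that every point on the open segment $(w,v)$ lies in $\intt K$ and has first coordinate $>t$. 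Rational points of $\intt\nob{\ybul}{D}$ are valuative, and those with first coordinate $>t$ land, after the shift by $-t\eone$, in $\nob{\ybul}{D-tY_1}$ by your section-multiplication correspondence; density then gives $\nob{\ybul}{D}\cap\{\nu_1\geq t\}\subseteq\nob{\ybul}{D-tY_1}+t\eone$. This is the substantive step your write-up is missing; the final continuity upgrade to $\RR$-classes and real $t$ is, as you say, routine once the rational integral case is secure.
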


\begin{lemma}\label{lem:nested}
Let $D$ be a big $\RR$-divisor and  $Y_\bullet$  an admissible flag on $X$.  Then the following hold.
\begin{enumerate}
 \item For any real number $\epsilon>0$ and any ample $\RR$-divisor $A$ on $X$, we have $\Delta_{Y_\bullet}(D) \subseteq \Delta_{Y_\bullet}(D+\epsilon A)$.
\item If $\alpha$ is an arbitrary nef  $\RR$-divisor class, then $\nob{\ybul}{D}\subseteq \nob{\ybul}{D+\alpha}$.
\item If $\alpha_m$ is any sequence of nef $\RR$-divisor classes with the property that $\alpha_m-\alpha_{m+1}$ is nef and 
$\|\alpha_m\|\to 0$ as $m\to\infty$  with respect to some norm on $N^1(X)_\RR$, then 
\[
 \Delta_{\ybul}(D) \equ \bigcap_m \nob{\ybul}{D+\alpha_m}\ .
\]
\end{enumerate}
\end{lemma}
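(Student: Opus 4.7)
The plan is to prove the three parts in sequence: (1) directly, then (2) and (3) by a volume-continuity bootstrap of (1). For (1), I would invoke Proposition~\ref{prop:definition} to reduce to the following statement: for every effective $\RR$-divisor $D'\equiv D$, the valuation vector $\nu_{Y_\bullet}(D')$ belongs to $\nob{\ybul}{D+\epsilon A}$. The key step is to produce an effective $\RR$-divisor $B\equiv \epsilon A$ with $\nu_{Y_\bullet}(B)=\origin$; then $D'+B$ is an effective representative of $D+\epsilon A$ whose valuation, by additivity of the flag valuation on sums of effective divisors, coincides with $\nu_{Y_\bullet}(D')$. To construct $B$, I would decompose $\epsilon A$ as a positive $\RR$-linear combination of ample $\QQ$-classes $A_i$, choose $k_i\geq 1$ with $k_iA_i$ very ample, and pick a section $s_i\in H^0(X,k_iA_i)$ nonvanishing at the point $Y_n$. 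Because $Y_n\subseteq Y_j$ for every $j$, the divisor $\Zeroes(s_i)$ cannot contain any $Y_j$ as a component, so $\nu_{Y_\bullet}(\Zeroes(s_i))=\origin$, and a positive combination $B=\sum_i\tfrac{c_i}{k_i}\Zeroes(s_i)$ has the desired properties.

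For (2), fix an ample class $A$ and set $\alpha_k\deq \alpha+\tfrac{1}{k}A$, which is ample for every $k$. Applying (1) in three different ways produces $\nob{\ybul}{D}\subseteq \nob{\ybul}{D+\alpha_k}$, $\nob{\ybul}{D+\alpha}\subseteq \nob{\ybul}{D+\alpha_k}$, and $\nob{\ybul}{D+\alpha_{k+1}}\subseteq \nob{\ybul}{D+\alpha_k}$, so the sets $\nob{\ybul}{D+\alpha_k}$ form a decreasing family of closed convex bodies sandwiching both $\nob{\ybul}{D}$ and $\nob{\ybul}{D+\alpha}$. Continuity of the volume function on $\Bbig(X)$ together with the equality $\vol_X=n!\cdot \vol_{\RR^n}\circ\nob{\ybul}{\cdot}$ forces $\vol_{\RR^n}(\nob{\ybul}{D+\alpha_k})\to \vol_{\RR^n}(\nob{\ybul}{D+\alpha})$. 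Continuity of Lebesgue measure from above transfers this limit to the intersection $\bigcap_k\nob{\ybul}{D+\alpha_k}$, and since two nested closed convex bodies of equal positive Lebesgue measure must coincide, $\bigcap_k\nob{\ybul}{D+\alpha_k}=\nob{\ybul}{D+\alpha}$; this yields $\nob{\ybul}{D}\subseteq \nob{\ybul}{D+\alpha}$. Part (3) then goes through verbatim with (2) in place of (1): (2) provides both the inclusion $\nob{\ybul}{D}\subseteq \nob{\ybul}{D+\alpha_m}$ and the monotonicity $\nob{\ybul}{D+\alpha_{m+1}}\subseteq \nob{\ybul}{D+\alpha_m}$ (using that $\alpha_m-\alpha_{m+1}$ is nef), $\alpha_m\to 0$ in $N^1(X)_\RR$ forces $\vol_X(D+\alpha_m)\to \vol_X(D)$, and the same convex-geometric argument identifies $\bigcap_m\nob{\ybul}{D+\alpha_m}$ with $\nob{\ybul}{D}$.

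The main obstacle will be the passage from measure-theoretic convergence to set-theoretic equality of the nested intersection with the expected limit body: this relies on each body being full-dimensional in $\RR^n$ (which follows from bigness of the classes involved) and on the elementary but crucial fact that two nested closed convex bodies with the same positive Lebesgue measure must agree. The construction of $B$ in (1) is essentially routine once one knows that ample $\QQ$-classes span the ample cone by positive $\RR$-linear combinations, and that the flag valuation is additive on sums of effective divisors none of which contains any $Y_j$ as a component.
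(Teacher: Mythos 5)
This lemma is quoted in the paper from \cite{KL15} without proof, so there is no in-text argument to compare against; your proof must be judged on its own merits, and it is correct. For part (1) you reduce, via Proposition~\ref{prop:definition}, to producing an effective $B\equiv\epsilon A$ with $\nu_{Y_\bullet}(B)=\origin$, and your construction of $B$ from sections of very ample multiples nonvanishing at $Y_n$ is exactly what is needed: since $Y_n\subseteq Y_j$ for all $j$, the zero divisor of such a section meets each $Y_j$ properly, so all flag valuations vanish, and additivity of $\nu_{Y_\bullet}$ on sums of effective $\RR$-divisors gives $\nu_{Y_\bullet}(D'+B)=\nu_{Y_\bullet}(D')$. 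For (2) and (3) you use a volume bootstrap in place of the more commonly invoked continuity of Newton--Okounkov bodies in the big class: the formula $\vol_X=n!\,\vol_{\RR^n}\circ\nob{\ybul}{\cdot}$ from \cite{LM}, continuity of $\vol_X$ on the big cone, Lebesgue continuity from above for the nested intersection, and the elementary fact that two nested closed convex bodies with the same positive Lebesgue measure coincide. The alternative route is to appeal to the global Okounkov body of \cite{LM}*{Theorem~4.5}, whose convexity yields Hausdorff continuity of $D\mapsto\nob{\ybul}{D}$ directly, and combine that with the nesting from (1). Both are sound; your argument trades a heavier structural input for a slightly longer but more elementary convex-geometric reduction. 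One small remark: the additivity of $\nu_{Y_\bullet}$ on sums of effective $\RR$-divisors holds in general (one always subtracts the appropriate multiple of $Y_1$ before restricting), so the caveat at the end of your write-up, that none of the summands contain a $Y_j$ as a component, is not actually needed.
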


\begin{definition}(Valuative points)
 Let $X$ be a projective variety, $\ybul$ an admissible flag, and $D$ a big $\QQ$-Cartier divisor on $X$. We call a point $v\in \nob{\ybul}{D}$ \emph{valuative},
 if it lies in the image of normalized map $\frac{1}{m}\nu_{\ybul}\colon |mD|\to \QQ_{\geq 0}$ for some $m\geq 1$, whenever $mD$ becomes Cartier.
\end{definition}

\begin{lemma}\label{lem:valuative}
With notation as above, $\intt\, \nob{\ybul}{D}\cap\QQ^n$ consists of valuative points. If $\nob{\ybul}{D}$ contains a small simplex with valuative 
vertices, then all  rational points of the simplex are valuative. 
\end{lemma}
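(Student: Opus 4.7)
The plan is to exploit the multiplicativity of the flag valuation: whenever $s\in H^0(aD)$ and $t\in H^0(bD)$ are nonzero sections with $\nu_{\ybul}(s)=u$ and $\nu_{\ybul}(t)=w$, the product $st\in H^0((a+b)D)$ satisfies $\nu_{\ybul}(st)=u+w$. From this I obtain immediately that if $v_0,\dots,v_r\in\nob{\ybul}{D}$ are valuative points realized by sections $s_i\in H^0(m_iD)$ with $\nu_{\ybul}(s_i)=m_iv_i$, and $k_0,\dots,k_r\in\ZZ_{\geq 0}$ are not all zero, then the product $\prod_i s_i^{k_i}\in H^0((\sum_i k_im_i)D)$ exhibits the point
\[
\sum_{i=0}^{r}\frac{k_im_i}{\sum_j k_jm_j}\,v_i
\]
as valuative. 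Hence every convex combination of the $v_i$ whose weights can be written in this form comes from an actual global section.

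Given this, the second assertion reduces to bookkeeping. For a rational point $p=\sum_i\lambda_iv_i$ in the simplex with $\lambda_i=p_i/q\in\QQ_{\geq 0}$ and $\sum_i p_i=q$, I would pick a common multiple $N$ of $m_0,\dots,m_n$ and set $k_i=p_iN/m_i$. A direct computation gives $\sum_j k_jm_j=Nq$ and $k_im_i/\sum_j k_jm_j=\lambda_i$, so the construction above produces a section of $H^0(NqD)$ whose normalised valuation equals $p$.

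For the first assertion I plan to reduce to the second via density. By Proposition~\ref{prop:definition} (together with the original Lazarsfeld--Musta\c t\u a description of $\nob{\ybul}{D}$ as the closure of normalised section valuations), the valuative points are dense in the body. Given $v\in\intt\nob{\ybul}{D}\cap\QQ^n$, I would select $n+1$ valuative points $v_0,\dots,v_n$ close enough to $v$ that they are in general position and $v$ lies in the interior of the simplex they span. Rationality of $v$ and of each $v_i$ forces the barycentric coordinates of $v$ with respect to that simplex to be strictly positive rationals, and the second assertion concludes the job.

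The main obstacle is the density step: Proposition~\ref{prop:definition} directly supplies only denseness of valuations of effective $\RR$-divisors numerically equivalent to $D$, whereas ``valuative'' in the sense of the lemma requires an honest global section of some integral $mD$. Closing this gap for a big $\QQ$-divisor asks one to approximate an arbitrary such effective $\RR$-divisor by one of the form $\tfrac{1}{m}\mathrm{div}(s)$ with $s\in H^0(mD)$ and $m$ sufficiently divisible, and to verify that the flag valuation is continuous enough for the approximation to transfer. Once this is in place, everything that remains is purely combinatorial.
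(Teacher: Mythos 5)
Your approach is sound and, as far as one can tell from its one-line proof, is precisely what the paper has in mind: multiplicativity of $\nu_{\ybul}$ gives closure of the valuative set under rational convex combinations, and the first assertion follows by approximating an interior rational point by a small simplex with valuative vertices. The product-of-sections construction and the $k_i = p_iN/m_i$ bookkeeping for the second assertion are correct; note only that the full-dimensionality of the simplex is what guarantees the barycentric coordinates of a rational point are themselves rational, so it is worth saying this explicitly (for a degenerate simplex one would instead argue that the affine system has rational coefficients and the feasible polytope of nonnegative solutions has a rational point).

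The ``obstacle'' you flag at the end is real but closes more cheaply than you propose. Do not try to squeeze density of valuative points out of Proposition~\ref{prop:definition}: it speaks of effective $\RR$-divisors numerically equivalent to $D$, and promoting numerical to linear equivalence is genuinely problematic when $\operatorname{Pic}^0(X)$ is nontrivial. Instead, reduce by homogeneity to $D$ integral and use the original Lazarsfeld--Musta\c t\u a construction of $\Delta_{\ybul}(D)$ as the level-one slice of the closed convex cone spanned by the valuation semigroup $\Gamma_{\ybul}(D)=\{(\nu_{\ybul}(s),m): 0\neq s\in H^0(mD)\}$; this exhibits $\Delta_{\ybul}(D)$ as the \emph{closed convex hull} of the set $V$ of valuative points. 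Your Step~1 shows $V$ is stable under convex combinations with rational weights, and since $\QQ$ is dense in $\RR$ this forces $\operatorname{conv}_{\RR}(V)\subseteq\overline{V}$, hence $\overline{V}=\overline{\operatorname{conv}}(V)=\Delta_{\ybul}(D)$. Thus $V$ is dense, the density step is not an independent input but a consequence of your own Step~1 together with the definition, and the simplex-approximation argument goes through.
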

\begin{proof}
Follows from Proposition~\ref{prop:definition} and multiplicative property of $\nu_{\ybul}$. 
\end{proof}

\subsection{Moving Seshadri constants}

We recall the  necessary information about moving Seshadri constants; our main source is \cite{ELMNP2}*{Section 6}. 

\begin{definition}(Moving Seshadri constant)
 Let $X$ be a projective variety, $x\in X$ be a smooth point, and $D$ a big $\RR$-divisor with $x\notin\Bplus(D)$. The \emph{moving Seshadri constant of $D$ at $x$} is defined as 
 \[
  \movs{D}{x} \deq \sup_{f^*D=A+E} \e(A;x)\ ,
 \]
where the supremum is taken over all projective morphisms $f\colon Y\to X$ with $Y$ smooth and $f$ an isomorphism around $x$, and over all decompositions $f^*D=A+E$, 
where $A$ is ample, and $E$ is effective with $f^{-1}(x)\notin\Supp (E)$. 
\end{definition}

If $D$ is nef, then $\movs{D}{x}$ specializes to the usual Seshadri constant $\e(D;x)$. The formal rules that the moving Seshadri constant obeys can be concisely expressed as follows.  

\begin{proposition}\cite{ELMNP2}*{Proposition 6.3}
With notation as above, $\movs{\ \cdot\ }{x}$ descends to a degree one homogeneous concave function on $\Bbig(X)\setminus B_+(x)$. 
\end{proposition}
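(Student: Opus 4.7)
I would verify the three asserted properties---numerical invariance, degree-one homogeneity, and concavity on $\Bbig(X)\setminus B_+(x)$---in order of increasing subtlety, working directly from the supremum definition of $\movs{D}{x}$.

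Homogeneity is built in. Any admissible decomposition $f^*D = A + E$ scales to $f^*(tD) = tA + tE$ for $t > 0$, and this rescaling preserves ampleness of the first summand, effectivity of the second, and the condition $f^{-1}(x)\notin\Supp(E)$. Combined with $\e(tA;x) = t\,\e(A;x)$ for the ordinary Seshadri constant on the ample cone, taking suprema in both directions yields $\movs{tD}{x} = t\,\movs{D}{x}$.

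For concavity, fix $D_1, D_2 \in \Bbig(X)\setminus B_+(x)$ and $\delta > 0$, and choose admissible decompositions $f_i^*D_i = A_i + E_i$ realizing $\e(A_i;x) > \movs{D_i}{x} - \delta/2$. Dominate the two birational models by a common smooth $g\colon Z \to X$ that remains an isomorphism over a neighborhood of $x$, pull both decompositions back, and absorb a small ample class into each pulled-back ample part so as to keep ampleness after the perturbation (at the cost of arbitrarily small Seshadri loss). Summing produces the admissible decomposition
\[
g^*(D_1 + D_2) \;=\; \bigl(\widetilde A_1 + \widetilde A_2\bigr) + \bigl(\widetilde E_1 + \widetilde E_2\bigr),
\]
and the infimum description $\e(A;x) = \inf_{C\ni x}(A\cdot C)/\mult_x C$ for nef classes immediately gives the superadditivity $\e(\widetilde A_1 + \widetilde A_2;x) \geq \e(\widetilde A_1;x) + \e(\widetilde A_2;x)$. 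Letting $\delta\to 0$ completes the step.

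The genuinely delicate piece is descent to numerical equivalence, since the definition uses literal divisor-level decompositions while numerical equivalence identifies only classes. The cleanest route is to establish the equivalent asymptotic characterization
\[
\movs{D}{x} \;=\; \limsup_{p\to\infty}\,\frac{s(pD;x)}{p}
\]
for $\QQ$-divisors, where $s(pD;x)$ denotes the maximal jet-separation order of $|pD|$ at $x$; this quantity depends only on the numerical class, and one then extends to $\RR$-divisors by continuity using the already-established homogeneity and concavity on the $\QQ$-locus. The nontrivial input is the equivalence of the two descriptions: given an admissible decomposition $f^*D = A + E$, global generation of $pA$ near $f^{-1}(x)$ together with $f^{-1}(x)\notin\Supp(E)$ translates ample Seshadri data into jet separation by $|pD|$ on $X$; conversely, linear systems separating many jets at $x$ can be converted via blow-ups to decompositions with ample part of comparable Seshadri constant.

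\textbf{Principal obstacle.} The crux is exactly this bridge between the supremum-over-decompositions definition and the asymptotic jet-separation limit. Attempting numerical invariance directly---transporting a divisorial decomposition of $D$ into one of a numerically equivalent $D'$ via small ample perturbations $A \mapsto A - \delta H$ and effective Kodaira-type representatives of the numerically trivial difference $f^*(D'-D) + \delta H$---is possible but technically awkward because such representatives exist only up to numerical equivalence, not up to literal equality of divisors. The jet-separation route sidesteps this difficulty cleanly, which is why the ELMNP argument proceeds in that direction.
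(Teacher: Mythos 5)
The paper states this proposition only as a citation to \cite{ELMNP2}*{Proposition 6.3}; it offers no proof of its own, so there is no in-paper argument to compare against. Judged on its merits, your sketch has the right outline for homogeneity and concavity, but it overcomplicates the descent to numerical equivalence, and the one genuinely delicate step in the concavity argument (ampleness of the pulled-back divisor) is waved at rather than handled.

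For numerical invariance, there is no need to detour through jet separation, and doing so risks circularity within the ELMNP development, since Proposition 6.6 there (the asymptotic jet-separation description) is proved downstream of Proposition 6.3, via the continuity theorem. The direct argument is elementary: if $D'\equiv D$ and $f^*D = A+E$ is an admissible decomposition, put $A' := A + f^*(D'-D)$ and keep $E$ unchanged, so that $f^*D' = A' + E$ literally as $\RR$-divisors. Since $D'-D$ is numerically trivial so is $f^*(D'-D)$; hence $A'\equiv A$ is ample, and $\e(A';f^{-1}(x))=\e(A;f^{-1}(x))$ because Seshadri constants of nef classes are numerical. Taking suprema and reversing roles gives $\movs{D'}{x}=\movs{D}{x}$. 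Your stated obstacle---``representatives exist only up to numerical equivalence, not up to literal equality of divisors''---never arises, because one adds the actual $\RR$-divisor $f^*(D'-D)$ to $A$ instead of trying to represent it effectively.

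For concavity, ``absorb a small ample class into each pulled-back ample part'' glosses over the real difficulty: writing $g=f_i\circ h_i$, the class $h_i^*A_i$ is big and nef but not ample when $h_i$ contracts something, and you cannot simply add an ample class to it without breaking the identity $g^*D_i=h_i^*A_i+h_i^*E_i$. The correct repair uses that $g^{-1}(x)\notin\Bplus(h_i^*A_i)$, because the augmented base locus of the pullback of an ample class sits inside $\mathrm{Exc}(h_i)$, which misses $g^{-1}(x)$. Hence there is a decomposition $h_i^*A_i=A_i'+F_i$ with $A_i'$ ample, $F_i$ effective, and $g^{-1}(x)\notin\Supp F_i$, and then for small $\delta>0$,
\[
g^*D_i \;=\; \bigl(h_i^*A_i-\delta F_i\bigr) + \bigl(h_i^*E_i+\delta F_i\bigr),
\qquad
h_i^*A_i-\delta F_i \;=\; (1-\delta)h_i^*A_i+\delta A_i'
\]
is an admissible decomposition on $Z$ whose ample part has Seshadri constant tending to $\e(h_i^*A_i;g^{-1}(x))=\e(A_i;f_i^{-1}(x))$ as $\delta\to 0$. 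With that repair the superadditivity of $\e(\cdot;x)$ on the nef cone closes the argument exactly as you describe.
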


By virtue of its concavity and the fact that its domain $\Bbig(X)\setminus B_+(x)\subseteq N^1(X)_\RR$ is open, $\movs{\ \cdot\ }{x}$ is of course a continuous function on it. The highly non-trivial result of \cite{ELMNP2} is that continuity is preserved under extending 
$\movs{\ \cdot\ }{x}$ by zero outside $\Bbig(X)\setminus B_+(x)$ in $\textup{N}^1(X)_\RR$. 

\begin{theorem}\cite{ELMNP2}*{Theorem 6.2}
Let $X$ be a smooth projective variety, $x\in X$. Then the function $\movs{\ \cdot\ }{x} \colon \textup{N}^1(X)_\RR  \to \RR_{\geq 0}$ given by 
\[ 
D   \mapsto  \begin{cases} \movs{D}{x} & \text{, if  $D\notin B_+(x)$} \\ 0 & \text{, otherwise} \end{cases}
\]
is continuous. 
\end{theorem}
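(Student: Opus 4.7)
By the preceding proposition, $\movs{\cdot}{x}$ is concave on the open convex cone $\Bbig(X)\setminus B_+(x)$, hence continuous there; it is trivially continuous on the interior of its complement, where it vanishes identically. Continuity therefore reduces to upper semi-continuity at boundary points $D_0$ of $\Bbig(X)\setminus B_+(x)$ (lower semi-continuity being automatic, as the function is non-negative and vanishes at such $D_0$). Concretely, the task is to show that $\movs{D_n}{x}\to 0$ whenever $D_n\in\Bbig(X)\setminus B_+(x)$ converges to some $D_0$ satisfying $D_0\in\partial\Bbig(X)$ or $x\in\Bplus(D_0)$.

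The plan is to transfer the problem to the blow-up $\pi\colon X'\to X$ at $x$ with exceptional divisor $E$ and then invoke the openness statement of Proposition~\ref{prop:openclosed}. The starting point would be the characterization
\[
 \movs{D}{x}\equ\sup\bigl\{t\geq 0 \;\big|\; \pi^*D-tE\text{ is big and }E\not\subset\Bplus(\pi^*D-tE)\bigr\},
\]
valid whenever $D$ is big with $x\notin\Bplus(D)$. This follows by unpacking the definition of moving Seshadri constants: a $t$ realizing the right-hand side produces, after a small ample perturbation, a decomposition $\pi^*D=A+F$ with $A$ ample, $F$ effective avoiding $E$, and $\e(A;x)$ arbitrarily close to $t$; conversely, such decompositions yield values of $t$ in the set.

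Now suppose for contradiction that $\limsup_n\movs{D_n}{x}\geq 2\delta$ for some $\delta>0$. After extracting a subsequence, $\pi^*D_n-\delta E$ is big and $E\not\subset\Bplus(\pi^*D_n-\delta E)$ for every $n$. Applying Proposition~\ref{prop:openclosed} on $X'$, the set $\{\alpha\in N^1(X')_\RR:y\notin\Bplus(\alpha)\}$ is open for each $y\in E$; taking the union over $y\in E$, the locus $V\deq\{\alpha:E\not\subset\Bplus(\alpha)\}$ is open. Since $\pi^*D_n-\delta E\to\pi^*D_0-\delta E$ with every member lying in $V$, the limit lies in $V$ as well, so $\pi^*D_0-\delta E$ is big and $E\not\subset\Bplus(\pi^*D_0-\delta E)$. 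Translating back via the isomorphism $X'\setminus E\cong X\setminus\{x\}$ yields $D_0\in\Bbig(X)$ and $x\notin\Bplus(D_0)$, contradicting that $D_0$ is a boundary point of $\Bbig(X)\setminus B_+(x)$.

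The main obstacle will be justifying the displayed characterization of $\movs{D}{x}$ on the blow-up, and especially the final implication $E\not\subset\Bplus(\pi^*D_0-\delta E)\Rightarrow x\notin\Bplus(D_0)$. This rests on a careful comparison of augmented base loci under blow-ups (in the spirit of \cite{ELMNP1}) together with the fact that $\pi$ is an isomorphism away from $E$; once this translation is in hand, upper semi-continuity is a purely topological consequence of Proposition~\ref{prop:openclosed}.
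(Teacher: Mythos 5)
This statement is not proved in the paper at all: it is quoted verbatim from \cite{ELMNP2}*{Theorem 6.2}, and the authors explicitly describe it as ``the highly non-trivial result of \cite{ELMNP2}.'' So there is no in-paper argument to compare against; what you have written is a reconstruction of the ELMNP2 proof rather than of anything in this article.

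As a stand-alone sketch, the framing is sensible (concavity gives interior continuity; lower semi-continuity at the boundary is automatic because the extension is by $0$ and the function is non-negative; the content is upper semi-continuity at $\partial(\Bbig(X)\setminus B_+(x))$). But the engine of the argument is the displayed identity
\[
\movs{D}{x}\;=\;\sup\bigl\{t\geq 0\;:\;\pi^*D-tE\ \text{is big and}\ E\not\subset\Bplus(\pi^*D-tE)\bigr\},
\]
together with two consequences you use silently: that the right-hand set of $t$'s is an interval containing $[0,\movs{D}{x})$ (otherwise $\movs{D_n}{x}\geq 2\delta$ does not let you conclude that $t=\delta$ lies in the set), and that the identity is meaningful and characterizes $x\notin\Bplus(D_0)$ even before you know $x\notin\Bplus(D_0)$ — this is precisely the ``final implication'' you flag as the main obstacle. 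That identity is essentially \cite{ELMNP2}*{Proposition 6.4}, and in ELMNP2 the chain of results around it (restricted volumes, jet separation, continuity) is developed together, so there is a real danger of circularity in invoking it to re-derive Theorem~6.2. You have correctly identified where the hard work lies, but you have not done it, so the proof has a genuine gap. The openness-of-$V$ step via Proposition~\ref{prop:openclosed} is fine; the unproven input is the blow-up characterization and its validity at the boundary point $D_0$.
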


In Section~5,  we offer an alternative extension of $\movs{D}{x}$ over $B_+(x)$.

\section{Infinitesimal Newton--Okounkov bodies}

In this section we define infinitesimal Newton--Okounkov bodies and  discuss some of their properties  needed in the rest of the paper. 
Recall that  we denote by $\pi\colon X'\to X$ the blow-up of $X$ at $x$ with 
exceptional divisor $E$. As $x$ is smooth, $X'$ is again a projective variety, and $E$ is an irreducible Cartier divisor on $X'$, which is smooth as a subvariety of 
$X'$. 

\begin{definition}
We say that $\ybul$ is an  \emph{infinitesimal flag over the point} $x$, if $Y_1=E$ and each $Y_i$ is a linear subspace in $E\simeq\PP^{n-1}$ of dimension $n-i$. 
We will often write $Y_n=\{z\}$. An \emph{infinitesimal flag over $X$} is an infinitesimal flag over $x\in X$ for some smooth point $x$. 

The symbol $\inob{\ybul}{D}$ stands for an infinitesimal Newton--Okounkov body of $D$, that is, 
\[
 \inob{\ybul}{D} \deq \nob{\ybul}{\pi^*D} \subseteq \RR^n_+\ ,
\]
where  $\ybul$ is an infinitesimal flag over $x$.
\end{definition}

\begin{remark}(Difference in terminology)
Note the deviation  in terminology from \cite{LM}*{Section 5.2}; what Lazarsfeld and Musta\c t\u a call an infinitesimal Newton--Okounkov body, is in our 
language (following \cite{KL14})  the generic infinitesimal Newton--Okounkov body. 
\end{remark}

\begin{remark}
Recently, interesting steps in the infinitesimal direction have  been taken  by Ro\'e \cite{Roe}.
\end{remark}

We start with an observation explaining the shapes of the 'right' kind of simplices that play the role of standard simplices in the infinitesimal theory.

\begin{lemma}(cf. \cite{KL15}*{Lemma 3.4})\label{lem:2inf}
Let $X$ be a projective variety, $x\in X$ a smooth point, and $A$ an ample Cartier divisor on $X$. Then there exists a natural number $m_0$ such that 
for  any infinitesimal flag $Y_{\bullet}$ over $x$ and  for every  $m\geq m_0$ there exist global sections  
$s_0',\ldots ,s_n'\in \HH{0}{X'}{\sO_{X'}(\pi^*(mA))}$ for which 
\[
\nu_{Y_{\bullet}}(s_0') \equ  \origin\ ,\ \nu_{Y_{\bullet}}(s_1')\equ \eone\ ,\ \text{ and }\ \nu_{Y_{\bullet}}(s_i') \equ \eone+\ei, 
\text{ for every $2\leq i\leq n$,}
\]
 where $\{ \eone,\ldots ,\en\}\subseteq \RR^n$ denotes the standard basis.
\end{lemma}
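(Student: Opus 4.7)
The plan is to identify the Okounkov valuation of $\pi^{*}s$ for $s\in \HH{0}{X}{\sOX(mA)}$ with the $1$-jet of $s$ at $x$, and then invoke $1$-jet separation for high multiples of an ample line bundle. Choose $m_0$ so that $\HH{1}{X}{\m_x^{2}\otimes \sOX(mA)}=0$ for every $m\geq m_0$; this is Serre vanishing applied to the coherent sheaf $\m_x^{2}$. The short exact sequence
\[
0\longrightarrow \m_x^{2}\otimes \sOX(mA) \longrightarrow \sOX(mA) \longrightarrow \sOX(mA)\otimes \sOX/\m_x^{2} \longrightarrow 0
\]
then yields surjectivity of $\HH{0}{X}{\sOX(mA)}\to \HH{0}{X}{\sOX(mA)\otimes \sOX/\m_x^{2}}$, so $mA$ separates $1$-jets at $x$ for every $m\geq m_0$. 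Crucially, $m_0$ depends only on $x$ and $A$, not on the choice of infinitesimal flag.

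Given an infinitesimal flag $\ybul$ over $x$, pick local coordinates $z_1,\ldots,z_n$ centered at $x$ compatible with the flag, meaning that for each $2\leq i\leq n$ the linear subspace $Y_i\subseteq E\simeq \PP(T_xX)$ is cut out by $z_1=\cdots=z_{i-1}=0$; such coordinates exist because the $Y_i$ form a descending flag of linear subspaces of dimension $n-i$. After trivializing $mA$ near $x$, use $1$-jet separation to pick sections $s_0,s_1,\ldots,s_n \in \HH{0}{X}{\sOX(mA)}$ with
\[
s_0(x)\neq 0,\quad s_1(x)=0 \text{ and } d_xs_1=dz_n,\quad s_i(x)=0 \text{ and } d_xs_i=dz_{i-1}\ (2\leq i\leq n),
\]
and set $s_i' \deq \pi^{*}s_i$.

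The verification rests on the standard computation that if $s$ vanishes to order exactly one at $x$, then $\nu_1(\pi^{*}s)=1$ and, writing $e$ for a local equation of $E$, the section $(\pi^{*}s)/e$ restricts along $E$ to the linear form $d_xs$, viewed as a global section of $\sO_E(1)$ on $E\simeq\PP(T_xX)$; consequently $(\nu_2,\ldots,\nu_n)$ coincides with the flag valuation of that linear form with respect to $Y_2\supseteq\cdots\supseteq Y_n$ in $\PP^{n-1}$. Thus $\pi^{*}s_0$ is nonzero along $E$ and restricts to a nonzero constant, giving $\nu_{\ybul}(s_0')=\origin$; the linear form $z_n$ stays nonzero down the full linear flag, hence $\nu_{\ybul}(s_1')=\eone$; and a direct computation in the chosen homogeneous coordinates shows that $z_{i-1}$ has flag valuation $(0,\ldots,0,1,0,\ldots,0)\in \RR^{n-1}$ with the $1$ in position $i-1$, yielding $\nu_{\ybul}(s_i')=\eone+\ei$. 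The only conceptual point is the identification of the higher components of the Okounkov valuation on the blowup with the linear-flag valuation of $d_xs$ on $\PP(T_xX)$; no real obstacle is anticipated.
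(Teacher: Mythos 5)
Your proof is correct and follows essentially the same strategy as the paper's: produce sections of $\sO_X(mA)$ with prescribed value and linear part at $x$ (uniformly for all $m\geq m_0$ and independently of the flag), pull back to $X'$, and read off the flag valuation from the projectivized linear term on $E\simeq\PP^{n-1}$. The only cosmetic difference is in how the sections are produced: you use Serre vanishing for $\m_x^2\otimes\sO_X(mA)$ to get $1$-jet separation directly, whereas the paper takes $m_0A$ very ample, uses Bertini to choose hyperplane sections through $x$ whose strict transforms cut out the flag, and twists by a section not vanishing at $x$ to pass from $m_0A$ to $mA$.
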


\begin{proof}
The line bundle $A$ is ample, therefore  there exists a natural number $m_0>0$ such that $m_0A$ is very ample, in particular the   linear series $|(m_0+m)A|$ define  
embeddings for all  $m\geq 0$.  As $|m_0A|$ separates tangent directions as well,  Bertini's theorem yields the  existence of  hyperplane sections 
$H_1,\ldots, H_{n-1} \in |m_0 A|$ intersecting  transversally at $x$, and $\tilde{H}_1\cap\ldots\cap\tilde{H}_i\cap E = Y_{i+1}$ for all $i=1,\ldots, n-1$, 
where $\tilde{H}_i$ denotes  the strict transform of $H_i$ through the blow-up map $\pi$.

At the same time observe that for any $m\geq m_0$ there exists a global section $t\in H^0(X,\sO_X(mA))$ not passing through $x$. By setting $s_i'\deq \pi^*(t+s_i)$ 
where $s_i\in\HH{0}{X}{\sO_X(m_0A)}$ is  a section associated to $H_i$, then the sections $s_0',\ldots,s_n'$ satisfy the requirements.
\end{proof} 

\begin{definition}
For a positive real number $\xi\geq 0$, the \emph{inverted standard simplex of size $\xi$}, denoted by $\Delta_{\xi}^{-1}$, is the convex hull of the set
\[
 \Delta_{\xi}^{-1} \ \deq \ \st{\origin, \xi \eone,\xi(\eone+\etwo),\ldots,\xi(\eone+\en)} \dsubseteq \RR^n .
\]
When $\xi =0$, then $\Delta_{\xi}^{-1}=\origin$.
\end{definition}

A major difference from  the non-infinitesimal case is the fact that infinitesimal Newton--Okounkov bodies are also contained in inverted simplices 
in a very natural way.

\begin{proposition}\label{prop:inverted}
Let $D$ be a big $\RR$-divsor $X$, then  $\Delta_{Y_{\bullet}}(\pi^*(D))\subseteq \iss{\mu(D;x)}$ 
for any  infinitesimal flag $Y_{\bullet}$ over the point $x$. 
\end{proposition}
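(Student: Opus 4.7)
The plan is to check, coordinate by coordinate, the two affine inequalities that cut out $\iss{\mu}$ inside $\RR^n_{\geq 0}$, where $\mu\deq\mu(D;x)$. A direct computation with the convex combinations of the vertices $\origin, \mu\eone, \mu(\eone+\etwo),\dots,\mu(\eone+\en)$ shows that
\[
 \iss{\mu} \equ \st{(y_1,\dots,y_n)\in\RR^n_{\geq 0}\mid y_1\leq\mu,\ y_1\geq y_2+\dots+y_n}\ .
\]
Since $\iss{\mu}$ is closed and convex, Proposition~\ref{prop:definition} reduces the task to verifying both inequalities on every valuation vector $\nu_{\ybul}(D')=(\nu_1,\dots,\nu_n)$ arising from an effective $\RR$-divisor $D'\equiv\pi^*D$ on $X'$.

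Next I would handle the first inequality by a standard pseudoeffectivity argument. Writing $D'=\nu_1 E+D''$ with $\nu_1=\ord_E(D')$ and $D''$ effective without an $E$-component, numerical equivalence gives $D''\equiv\pi^*D-\nu_1 E$. Since $D''$ is effective, $\pi^*D-\nu_1 E$ is pseudoeffective, which forces $\nu_1\leq\mu(\pi^*D;E)=\mu(D;x)$.

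For the second inequality I would restrict $D''$ to $E\simeq\PP^{n-1}$. Using the blow-up identities $(\pi^*D)|_E\equiv 0$ and $E|_E\equiv -H$, where $H$ is the hyperplane class on $\PP^{n-1}$, the restriction $D''|_E$ is an effective $\RR$-divisor numerically equivalent to $\nu_1 H$. By the inductive construction of $\nu_{\ybul}$, the tail $(\nu_2,\dots,\nu_n)$ equals the Okounkov valuation of $D''|_E$ along the linear flag $Y_2\supseteq\dots\supseteq Y_n$ inside $E$. Since the Newton--Okounkov body of $\nu_1 H$ on $\PP^{n-1}$ against a linear flag is the ordinary standard simplex of size $\nu_1$, a second application of Proposition~\ref{prop:definition} yields $\nu_2+\dots+\nu_n\leq\nu_1$, which is exactly the second defining inequality of $\iss{\mu}$.

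I do not anticipate a serious obstacle here; the argument is essentially bookkeeping once the restrictions to $E$ have been computed. The only technical point worth care is ensuring that the inductive restriction procedure defining $\nu_{\ybul}$ behaves cleanly for $\RR$-divisors, which follows from the additivity and positive homogeneity of $\nu_{\ybul}$ on the effective cone. The elegance of the approach is that both bounds are forced by pseudoeffectivity: on $X'$ in one direction, and on the exceptional divisor $E$ in the other.
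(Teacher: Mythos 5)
Your proof is correct, and the underlying mechanism is the same as the paper's: both inequalities cutting out $\iss{\mu}$ are checked on valuation vectors of effective divisors, with the key bound $\nu_2+\dots+\nu_n\leq\nu_1$ coming from the fact that the restriction to $E\simeq\PP^{n-1}$ of what remains after removing $\nu_1 E$ has degree $\nu_1$. The implementations differ: the paper first reduces to an integral divisor by continuity and homogeneity and then expands a section in local coordinates, identifying the restriction to $E$ with the leading homogeneous term $P_m$ of degree $\leq m=\nu_1$; you instead stay with effective $\RR$-divisors throughout and obtain the degree bound numerically from $E|_E\equiv -H$ together with the known Newton--Okounkov body of $\sO_{\PP^{n-1}}(\nu_1)$ with respect to a linear flag. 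Your route avoids the reduction steps and the coordinate bookkeeping, at the cost of invoking the $\PP^{n-1}$ computation as a black box (which is itself proved by essentially the paper's degree argument); both are complete.
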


\begin{proof}
By the continuity of Newton--Okounkov bodies inside the big cone it suffices  to treat the case when $D$ is a big $\QQ$-divisor. Homogeneity then lets 
us assume that $D$ is integral. Set $\mu=\mu(D;x)$. 

We will follow the line of thought of the proof of \cite{KL14}*{Proposition 3.2}. Recall that  $E\simeq \PP^{n-1}$;  we will write  $[y_1:\ldots :y_{n}]\in\PP^{n-1}$ 
for a set of homogeneous coordinates in $E$ such that 
\[
 Y_i \equ \Zeroes(y_1,\dots,y_{i-1}) \dsubseteq \PP^{n-1} \equ E\  \text{for all $2\leq i\leq n$.}
\]
With respect to a system of local coordinates  $(u_1,\ldots ,u_n)$ at  the point $x$, the blow-up $X'$ can be described (locally around $x$) as 
\[
X' \equ  \big\{\big((u_1,\ldots ,u_n);[y_1:\ldots :y_{n}]\big) \ | \ u_iy_j=u_jy_i \text{ for any } 1\leq i<j\leq n \big\}\ .
\] 
We can then write a global section $s$ of $D$  in the form 
\[
s \equ  P_m(u_1,\ldots ,u_n)+P_{m+1}(u_1,\ldots ,u_n)+\ldots + P_{m+k}(u_1,\ldots u_n) 
\]
around $x$,  where $P_{i}$ are homogeneous polynomials of degree $i$. 

We will perform the computation in the open subset $U_n=\{y_n\neq 0\}$, where we can take $y_n=1$ and the defining equations of the blow-up 
are given by $u_i=u_ny_i$ for  $1\leq i\leq n-1$. Then
\[
s|_{U_n} \ \equ \ u_n^m\cdot \big(P_m(y_1,\ldots ,y_{n-1},1)+u_nP_{m+1}(y_1,\ldots ,y_{n-1},1)+\ldots +u_n^kP_{m+k}(y_1,\ldots ,y_{n-1},1)\big)\ ,
\]
in particular,  $\nu_1(s)=m$. Notice that for the rest of $\nu_i(s)$'s  we have to restrict to the exceptional divisor $u_n=0$ and 
thus the only term arising  in the computation is  $P_m(y_1,\ldots ,y_{n-1},1)$. 

As $\deg P_m\leq m$, taking into account the algorithm for constructing the valuation vector of a section one can see that indeed 
\[
\nu_2(s)+\ldots +\nu_n(s) \ \leq \ \nu_1(s) \ ,
\]
and this finishes the proof of the proposition.
\end{proof}

\section{Restricted base locus via Newton--Okounkov bodies}
 
The section is devoted to  our characterization of restricted base loci in terms of infinitesimal data. The proofs are variations of those found in \cite{KL15}*{Section 2}.

\begin{theorem}\label{thm:bminus inf}
 Let $X$ be a smooth projective variety, $D$ a big $\RR$-divisor and $x\in X$ an arbitrary point on $X$. Then the following are equivalent. 
 \begin{enumerate}
  \item $x\not\in \Bminus(D)$.
  \item There exists an infinitesimal flag $\ybul$ over  $x$ such that $\origin\in\inob{\ybul}{D}$.  
   \item For every infinitesimal flag $\ybul$ over  $x$, one has $\origin\in \inob{\ybul}{D}$.
 \end{enumerate}
\end{theorem}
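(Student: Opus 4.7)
The plan is to prove the chain $(3) \Rightarrow (2) \Rightarrow (1) \Rightarrow (3)$, adapting the argument for general admissible flags from \cite{KL15}*{Section~2} to the infinitesimal setting; the first implication is automatic.

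For $(1) \Rightarrow (3)$, I would fix any infinitesimal flag $\ybul$ over $x$. Writing $\Bminus(D) = \bigcup_A \Bstable(D+A)$ where the union runs over ample $A$ with $D+A$ a $\QQ$-divisor, the hypothesis $x \notin \Bstable(D+A)$ supplies for each such $A$ some $m \geq 1$ and a section $s \in H^0(X, \sO_X(m(D+A)))$ with $s(x) \neq 0$. Its pullback $\pi^*s$ does not vanish along $E$, so $\nu_1(\pi^*s) = 0$; since $\pi^*(D+A)|_E$ is numerically trivial, the restriction $(\pi^*s)|_E$ is a nonzero constant on $E$, forcing $\nu_i(\pi^*s) = 0$ for all $i \geq 2$. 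Hence $\origin \in \inob{\ybul}{D+A}$ for every such $A$. Selecting a nested sequence of ample $\QQ$-classes $\alpha_m$ with $\alpha_m - \alpha_{m+1}$ nef and $\|\alpha_m\| \to 0$, Lemma~\ref{lem:nested}(3) yields
\[
\origin \in \bigcap_m \inob{\ybul}{D+\alpha_m} = \inob{\ybul}{D}.
\]

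The core step is $(2) \Rightarrow (1)$. Assume $\origin \in \inob{\ybul}{D}$. By Proposition~\ref{prop:definition}, $\origin$ belongs to the closed convex hull of $\{\nu_{\ybul}(F) : F \equiv \pi^*D,\ F \in \textup{Div}_{\geq 0}(X')_\RR\}$. Projecting onto the first coordinate, which is the linear functional $F \mapsto \mult_E F$, this forces
\[
\inf\{\mult_E F : F \equiv \pi^*D,\ F \geq 0\} = 0.
\]
Pick effective $F_k \equiv \pi^*D$ with $a_k := \mult_E F_k \to 0$, decompose $F_k = a_k E + G_k$ where $G_k$ has no component along $E$, and let $H_k := \pi_* F_k = \pi_* G_k$ (using $\pi_* E = 0$). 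The identity $\pi_* \pi^* = \mathrm{id}_{N^1(X)}$ gives $H_k \equiv D$, and the standard formula $\pi^* H_k = G_k + (\mult_x H_k) E$ (valid since $G_k$ avoids $E$), combined with $F_k - \pi^* H_k \equiv 0$, forces $a_k = \mult_x H_k$. Hence $\mult_x H_k \to 0$, so $\asym{x}{D} = 0$, and the characterization $\Bminus(D) = \{y \in X : \asym{y}{D} > 0\}$ from \cite{ELMNP1} delivers $x \notin \Bminus(D)$.

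The principal obstacle is the multiplicity accounting in the push-forward step of $(2) \Rightarrow (1)$: one must extract a downstairs effective representative of $D$ whose multiplicity at $x$ is exactly the exceptional coefficient of the upstairs representative $F_k$. This identification rests essentially on the blow-up being at a smooth point, via the clean strict-transform decomposition $\pi^* H = \widetilde H + \mult_x(H) \cdot E$; once this bridge is in place, the remainder is a direct invocation of the ELMNP dictionary between $\Bminus$ and asymptotic multiplicity.
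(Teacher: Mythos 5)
Your argument is correct, and while your $(1)\Rightarrow(3)$ closely follows the paper's (with the minor fix that the ample perturbations $\alpha_m$ should be chosen so that $D+\alpha_m$, rather than $\alpha_m$ itself, is a $\QQ$-class), your treatment of $(2)\Rightarrow(1)$ is a genuinely different and shorter route. The paper first perturbs to rational classes $D+t_mA$, applies the sum function from \cite{KL15}*{Proposition~2.6} to deduce $\asym{z}{\pi^*(D+t_mA)}=0$ at the flag center $z=Y_n$, passes to $\asym{x}{D+t_mA}=0$ via $\asym{E}{\pi^*(D+t_mA)}\leq\asym{z}{\pi^*(D+t_mA)}$, invokes the $\QQ$-divisor form of \cite{ELMNP1}*{Proposition~2.8}, and only then takes the countable union over $m$. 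You instead extract everything from the first coordinate alone: by Proposition~\ref{prop:definition}, the hypothesis $\origin\in\inob{\ybul}{D}$ forces $\inf\{\ord_E F : F\equiv\pi^*D,\ F\geq 0\}=0$, and the pushforward identity $\ord_E F=\mult_x(\pi_*F)$ --- which you justify cleanly by comparing $F=aE+G$ with $\pi^*\pi_*F=G+\mult_x(\pi_*F)\,E$ and using that $E$ is not numerically trivial on $X'$ --- transports this to $\asym{x}{D}=0$ on $X$, never touching the remaining valuation coordinates, the sum function, or the auxiliary center $z$. The trade-off is that you invoke the dictionary $\asym{x}{D}>0 \Leftrightarrow x\in\Bminus(D)$ directly for $\RR$-classes, whereas the paper carefully stays within the $\QQ$-setting of \cite{ELMNP1}*{Proposition~2.8}; this is harmless (the $\RR$-version is standard and is itself used in Section~5 of the paper), but it is worth making explicit that your argument leans on it. Both proofs bottom out in the multiplier-ideal input of \cite{ELMNP1}, so neither extends the implication to merely normal $X$.
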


\begin{proof}
 $(1)\Rightarrow(3)$ Assume   $x\notin \Bminus(D)$, and fix a sequence of ample $\RR$-divisor $(\alpha_m)_{m\in\NN}$ so that $\alpha_m-\alpha_{m+1}$ is ample and $D+\alpha_m$ is a $\QQ$-divisor for any $m\geq 1$, and  $\|\alpha_m\|\to 0$ as $m\to \infty$. 
 
Now, let $\ybul$ be an arbitrary infinitesimal flag over  $x$. Since $x\notin\Bminus(D)$, then  $x\notin \Bstable(D+\alpha_m)$ for all $m\geq 1$. 
On the other hand, we have the sequence of equalities
 \[
  \Bstable(\pi^*(D+\alpha_m)) \equ \pi^{-1}( \Bstable(D+\alpha_m)) \ .
 \]
In particular, this implies that
\[
 E\cap \Bstable(\pi^*(D+\alpha_m)) \equ \varnothing \ ,
\]
for all $m\geq 1$. As $\ybul$ is an infinitesimal flag over $x$,  there must exist a sequence of natural numbers $n_m\geq 1$ and a  sequence of  global sections  $s_m\in H^0(X',\sO_{X'}(\pi^*(n_m(D+\alpha_m))))$ such that $s_m(z)\neq 0$. This implies that $\nu_{Y_{\bullet}}(s_m)=\origin$ for each $m\geq 1$. 
In particular, $\origin\in \inob{\ybul}{D+\alpha_m}$ for every $m\geq 1$.

Recall that $\pi^*\alpha_m$ is big and semi-ample, therefore 
\[
 \inob{\ybul}{D} \equ \bigcap_{m=1}^{\infty} \inob{\ybul}{D+\alpha_m}\ ,
\]
according to Lemma~\ref{lem:nested}, hence $\origin\in\inob{\ybul}{D}$ as wanted.  

The  implication $(3)\Rightarrow (2)$ is trivial, and so we are left with checking $(2)\Rightarrow (1)$. Let   $Y_{\bullet}$ be an infinitesimal flag over $x$ so that  $\origin\in\inob{\ybul}{D}$. Fix an ample $\RR$-divisor $A$ on $X$ and an decreasing sequence of positive real number $(t_m)_{m\in\NN}$  such that $\|t_m\|\to 0$ as $m\to\infty$, and  $D+t_mA$ is  a $\QQ$-divisor for all $m\geq 1$. Now, by Lemma~\ref{lem:nested}, we know
\[
 \origin \in \inob{\ybul}{D} \dsubseteq  \inob{\ybul}{D+t_mA} 
\]
for all $m\geq 0$, therefore $\min\sigma_{\pi^*(D+t_mA)}=0$ for the sum function  $\sigma_{\pi^*(D+t_mA)}\colon\inob{\ybul}{D+t_mA}\rightarrow\RR_{+}$. In particular, this implies, by making use of \cite{KL15}*{Proposition~2.6}, that $\mult_z(||\pi^*(D+t_mA)||)=0$ for all $m\geq 1$, where $Y_n=z$ is the base point of the flag $Y_{\bullet}$. Taking into account the string of (in)equalities 
\[
 \mult_x(\| D+t_mA\|) \equ \mult_E(\| \pi^*(D+t_mA)\|) \dleq \mult_z(\| \pi^*(D+t_mA) \|) \equ 0
\]
yields $\mult_x\| D+t_mA\|=0$ for all $m\geq 1$. As all the divisors $D+t_mA$ were taken to be $\QQ$-divisors, \cite{ELMNP1}*{Proposition 2.8} leads to $x\notin\Bminus(D+t_mA)$ for all $m\geq 1$. But, since 
\[
 \Bminus(D) \equ \bigcup_{m} \Bminus(D+\alpha_m) \equ \bigcup_m\Bstable(D+\alpha_m)
\]
by \cite{ELMNP1}*{Proposition~1.19}, we are done. 
\end{proof}
 
\begin{remark}
 We point out that the implication $(1)\Rightarrow (3)$ remains true under the weaker assumptions that $X$ is a projective variety and $x\in X$ a smooth point. 
 For the converse the answer is unclear since the proof of $(2)\Rightarrow (1)$ uses \cite{ELMNP1}*{Proposition~2.8}, which in turn is verified with the help of  multiplier ideals and 
 Nadel vanishing. 
\end{remark}

\begin{corollary}\label{cor:nefness}
Let $X$ be a smooth projective variety,  $D$ a big $\RR$-divisor on $X$. Then the following are equivalent.
\begin{enumerate}
 \item $D$ is nef.
 \item For every point $x\in X$ there exists an infinitesimal flag $\ybul$ over $x$ such that $\origin\in\inob{\ybul}{D}$.
 \item The origin $\origin\in\inob{\ybul}{D}$ for every infinitesimal flag over $X$. 
\end{enumerate} 
\end{corollary}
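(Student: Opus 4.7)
The plan is to deduce this corollary directly from Theorem~\ref{thm:bminus inf} by combining it with the standard characterization that, for a big $\RR$-divisor $D$ on a smooth projective variety, $D$ is nef if and only if $\Bminus(D)=\varnothing$. Granting this equivalence, the statement is just the pointwise version of Theorem~\ref{thm:bminus inf} applied simultaneously at every $x \in X$: nefness is encoded by $x \notin \Bminus(D)$ for all $x$, which Theorem~\ref{thm:bminus inf} translates into the existence (for (2)) or universality (for (3)) of infinitesimal flags $\ybul$ over $x$ with $\origin \in \inob{\ybul}{D}$.

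The only point requiring attention is the nef-versus-$\Bminus$ characterization for an $\RR$-divisor. The forward direction is immediate: if $D$ is nef then $D+A$ is ample for every ample $\RR$-divisor $A$, so whenever $D+A$ is a $\QQ$-divisor we get $\Bstable(D+A)=\varnothing$, and taking the union over such $A$ yields $\Bminus(D)=\varnothing$. For the reverse direction, if $D$ is not nef there is an irreducible curve $C$ with $D \cdot C < 0$; choosing an ample $\RR$-divisor $A$ small enough that $D+A$ is a $\QQ$-divisor with $(D+A)\cdot C < 0$, every effective member of $|m(D+A)|$ must contain $C$, so $C \subseteq \Bstable(D+A) \subseteq \Bminus(D)$. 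This is recorded as \cite{ELMNP1}*{Example~1.18}.

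With these two inputs in place, the proof is a one-line assembly: $(1) \Leftrightarrow$ ``$x \notin \Bminus(D)$ for every $x \in X$'' $\Leftrightarrow (3) \Rightarrow (2) \Rightarrow (1)$, where the first equivalence is the characterization above, the second is part $(1) \Leftrightarrow (3)$ of Theorem~\ref{thm:bminus inf} applied at each point, and the remaining implications follow from $(3) \Rightarrow (2)$ (trivial) and $(2) \Rightarrow (1)$ via the same theorem.

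There is no genuine obstacle here, since the corollary is a globalization of Theorem~\ref{thm:bminus inf}. The only mild subtlety is ensuring the characterization of nefness in terms of $\Bminus$ is used for $\RR$-divisors rather than just $\QQ$-divisors, which is why the argument above is phrased in terms of perturbations $D+A$ chosen so as to become $\QQ$-divisors, matching the definition of $\Bminus$ employed in Section~1.
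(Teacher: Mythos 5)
Your proposal is correct and is exactly the intended derivation: the paper states Corollary~\ref{cor:nefness} as an immediate consequence of Theorem~\ref{thm:bminus inf} together with the standard fact (\cite{ELMNP1}*{Example~1.18}) that a big $\RR$-divisor is nef if and only if $\Bminus(D)=\varnothing$. Your careful verification of that equivalence for $\RR$-divisors via perturbation to $\QQ$-divisors is a welcome addition but does not change the route.
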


\section{Augmented base loci, infinitesimal Newton--Okounkov bodies, and jet separation}

In this section, which is the core of the paper, we extend  the characterization of augmented base loci in terms of infinitesimal Newton--Okounkov 
bodies suggested by \cite{KL14}*{Theorem 3.8} to all dimensions (cf. \cite{KL15}*{Theorem B} as well). Our statement  can be seen as  a  generalization of 
Seshadri's criterion for ampleness. The argument   will pass through a study of the  connection between infinitesimal Newton--Okounkov bodies and jet separation.

\subsection{The main theorem and the largest inverted simplex constant}
 
\begin{theorem}\label{thm:main augmented}
 Let $X$ be a smooth projective variety, $x\in X$ an arbitrary (closed) point,  $D$ a big $\RR$-divisor on $X$. Then the following are equivalent.
 \begin{enumerate}
  \item $x\notin\Bplus(D)$.
  \item For every infinitesimal flag $\ybul$ over $x$ there is $\xi>0$ such that $\iss{\xi}\subseteq \inob{\ybul}{D}$. 
  \item There exists an infinitesimal $\ybul$ over $x$ and $\xi>0$  such that $\iss{\xi}\subseteq \inob{\ybul}{D}$.
 \end{enumerate}
\end{theorem}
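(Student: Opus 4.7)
The plan is to prove the cyclic chain $(1) \Rightarrow (2) \Rightarrow (3) \Rightarrow (1)$; since $(2) \Rightarrow (3)$ is formal, everything comes down to the remaining two implications, both of which rest on the interplay between inverted simplices inside $\inob{\ybul}{D}$ and separation of jets of $mD$ at $x$.

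For $(1) \Rightarrow (2)$: The assumption $x \notin \Bplus(D)$ is equivalent to $\movs{D}{x} > 0$, and the jet-separation description of moving Seshadri constants in \cite{ELMNP2} then guarantees that, for every rational $\xi < \movs{D}{x}$, the line bundle $mD$ separates $\lfloor m\xi\rfloor$-jets at $x$ for all $m \gg 0$. Fix an arbitrary infinitesimal flag $\ybul$ over $x$ and choose local coordinates $(u_1, \ldots, u_n)$ at $x$ adapted to $\ybul$, in the sense that on the chart $U_n = \{y_n \neq 0\}$ of $X'$ appearing in the proof of Proposition~\ref{prop:inverted}, the subspace $Y_i \subseteq E$ is cut out by $y_1 = \cdots = y_{i-1} = 0$. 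Jet separation then produces global sections $s_0, s_1, \ldots, s_n$ of $mD$ whose $\lfloor m\xi\rfloor$-jets at $x$ are, respectively, a nonzero constant, the monomial $u_n^{\lfloor m\xi\rfloor}$, and the monomials $u_{i-1}^{\lfloor m\xi\rfloor}$ for $i = 2, \ldots, n$. Running the chart computation from the proof of Proposition~\ref{prop:inverted} on each $\pi^*s_j$ shows that the valuation of $\pi^*s_j$ with respect to $\ybul$ equals $\origin$, $\lfloor m\xi\rfloor\eone$, and $\lfloor m\xi\rfloor(\eone + \ei)$ respectively. Rescaling by $1/m$, letting $m \to \infty$, and using that $\inob{\ybul}{D}$ is closed places $\origin$, $\xi\eone$, and $\xi(\eone + \ei)$ all inside $\inob{\ybul}{D}$; taking the closed convex hull yields $\iss{\xi} \subseteq \inob{\ybul}{D}$, and since $\xi$ is arbitrary below $\movs{D}{x}$, assertion $(2)$ follows.

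The implication $(3) \Rightarrow (1)$ is where the bulk of the difficulty lies, and I plan to attack it through the converse of the jet-separation principle used above: if $\iss{\xi} \subseteq \inob{\ybul}{D}$ for some infinitesimal flag and some $\xi > 0$, then $mD$ must separate roughly $m\xi$ jets at $x$ for $m$ sufficiently large. Granting this --- it is the content of Proposition~\ref{prop:jet separation} --- the ELMNP characterization of $\movs{D}{x}$ as the asymptotic rate of jet separation yields $\movs{D}{x} \geq \xi > 0$, whence $x \notin \Bplus(D)$.

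The hard part is the jet-separation proposition itself. The hypothesis $\iss{\xi} \subseteq \inob{\ybul}{D}$ is only a statement about the closed convex hull of valuation vectors of sections of the various $\pi^*(mD)$, and offers no direct mechanism to prescribe the $m\xi$-jet at $x$ of an actual section of $mD$. The bridge is Proposition~\ref{prop:compute}: slicing $\inob{\ybul}{D}$ horizontally at $\nu_1 = t$ and identifying the resulting set with $\nob{\ybul}{\pi^*D - tE}$ up to translation by $t\eone$ converts the hypothesis into concrete volume lower bounds $\vol(\pi^*D - tE) \geq \xi^n - t^n$ for all $t \in [0, \xi)$. These estimates should combine with the Fulger--Koll\'ar--Lehmann comparison between volumes of big divisors and augmented base loci \cite{FKL}*{Theorem~A} to force $E \not\subseteq \Bplus(\pi^*D - tE)$ for small $t$ in a sufficiently strong sense; translating this positivity back into genuine separation of jets of $mD$ at $x$ is the technical core of the argument.
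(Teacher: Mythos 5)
Your overall strategy---reduce everything to the correspondence between inverted simplices inside $\inob{\ybul}{D}$ and asymptotic jet separation of $mD$ at $x$, then invoke \cite{ELMNP2}---is the right one, and it captures the spirit of the paper's argument. Your treatment of $(1)\Rightarrow(2)$ is a genuinely different route: the paper proves this directly (Proposition~\ref{prop:not in Bplus implies inf simplex}) by writing $\Bplus(D)=\Bstable(D-A)$, taking a section of $\pi^*(m(D-A))$ nonvanishing at $z$, and tensoring with the sections from Lemma~\ref{lem:2inf}; you instead pass through jet separation. Your argument is correct (it essentially coincides with the $\epsilon(\|D\|;x)\leq\xi(D;x)$ half of Proposition~\ref{prop:seshadri inverted} in the paper), but it is less economical since it imports nontrivial ELMNP theory for a step that can be handled by hand.

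The gap is in $(3)\Rightarrow(1)$. You state Proposition~\ref{prop:jet separation} as ``if $\iss{\xi}\subseteq\inob{\ybul}{D}$ for \emph{some} infinitesimal flag, then $mD$ separates roughly $m\xi$ jets,'' but the actual hypothesis needed (and what the paper proves) is that $\Delta^{-1}_{n+k+\epsilon}\subseteq\inob{\ybul'}{D}$ for \emph{every} infinitesimal flag $\ybul'$ over $x$. The all-flag hypothesis is used essentially: setting $B=\pi^*D-(n+k)E$, the hypothesis places $\origin\in\nob{\ybul'}{B}$ for all infinitesimal flags, whence Theorem~\ref{thm:bminus inf} yields $\Bminus(B)\cap E=\varnothing$; the proof then runs through asymptotic multiplier ideals and Nadel vanishing to get the required surjectivity of the restriction map, which is the actual engine of jet separation here and is absent from your sketch. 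To bridge hypothesis $(3)$ (one flag) to the all-flag hypothesis you must insert Proposition~\ref{prop:uniformity for inf flags}: if $\iss{\xi}\subseteq\inob{\ybul}{D}$ for one infinitesimal flag, then the same holds for all of them. \emph{That} is where \cite{FKL}*{Theorem A} enters, together with \cite{LM}*{Theorem A} and the slicing theorem \cite{LM}*{Theorem 4.24}: from the volume estimates you correctly extract via Proposition~\ref{prop:compute}, one deduces $E\nsubseteq\Bplus(\pi^*D-\xi'E)$, then compares restricted Newton--Okounkov body volumes with the (flag-independent) restricted volume $\vol_{X'|E}$ to show the slice at height $\xi'$ equals the full simplex $\Delta_{\xi'}^{n-1}$ for every flag. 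Your paragraph conflates this uniformity step with the jet-separation proposition itself, mislocates the role of FKL, and omits Nadel vanishing; as written the chain does not close.
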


As an immediate consequence via the equivalence of ampleness and $\Bplus$ being empty, we obtain 

\begin{corollary}\label{cor:ampleness}
  Let $X$ be a smooth projective variety and  $D$ a big $\RR$-divisor on $X$. Then the following are equivalent.
 \begin{enumerate}
  \item $D$ is ample.
  \item For every point $x\in X$ and every infinitesimal flag $\ybul$ over $x$ there exists a real number $\xi>0$ for which $\iss{\xi}\subseteq \inob{\ybul}{D}$. 
  \item For every point $x\in X$ there exists an infinitesimal flag $\ybul$ over $x$ and a real number $\xi>0$ such that  $\iss{\xi}\subseteq \inob{\ybul}{D}$. 
 \end{enumerate}
 \end{corollary}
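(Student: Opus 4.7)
The plan is to deduce this corollary directly from Theorem~\ref{thm:main augmented} together with the standard fact that for a big $\RR$-divisor $D$ one has $D$ ample if and only if $\Bplus(D)=\varnothing$. This last equivalence is what reduces a global (ampleness) statement to a pointwise (non-membership in $\Bplus$) statement, which is precisely the form that Theorem~\ref{thm:main augmented} takes.

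More concretely, I would argue as follows. The implication $(1)\Rightarrow(2)$: assuming $D$ is ample, for every $x\in X$ we have $x\notin\Bplus(D)=\varnothing$; by the implication $(1)\Rightarrow(2)$ of Theorem~\ref{thm:main augmented} applied pointwise, for every infinitesimal flag $\ybul$ over $x$ there exists $\xi>0$ with $\iss{\xi}\subseteq\inob{\ybul}{D}$. The implication $(2)\Rightarrow(3)$ is trivial. For $(3)\Rightarrow(1)$: fix an arbitrary point $x\in X$; by hypothesis there exists an infinitesimal flag $\ybul$ over $x$ and some $\xi>0$ with $\iss{\xi}\subseteq\inob{\ybul}{D}$. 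By the implication $(3)\Rightarrow(1)$ of Theorem~\ref{thm:main augmented} we conclude $x\notin\Bplus(D)$. Since $x$ was arbitrary we get $\Bplus(D)=\varnothing$, and because $D$ is big this forces $D$ to be ample.

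There is essentially no obstacle here beyond Theorem~\ref{thm:main augmented} itself; the only non-formal input is the equivalence \textquotedblleft$D$ big and $\Bplus(D)=\varnothing$ $\iff$ $D$ ample,\textquotedblright\ which is a well-known consequence of the basic properties of augmented base loci recalled in Section~1 (see e.g.\ \cite{ELMNP1}). Thus the corollary is in practice just the \emph{quantifier-over-points} version of the main theorem of the section.
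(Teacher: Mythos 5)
Your proof is correct and matches the paper's reasoning exactly: the paper deduces Corollary~\ref{cor:ampleness} from Theorem~\ref{thm:main augmented} in one line, precisely via the equivalence $D$ ample $\iff$ $\Bplus(D)=\varnothing$, which is what you spell out. No discrepancy to report.
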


We will first give a proof of implication $(1)\Rightarrow (2)$ from Theorem~\ref{thm:main augmented}.

\begin{proposition}\label{prop:not in Bplus implies inf simplex}
 Let $X$ be a projective variety, $x\in X$ a smooth point, and $D$ a big $\RR$-Cartier divisor on $X$. If $x\notin \Bplus(D)$,  then there exists a real number $\xi>0$ such that $\iss{\xi}\subseteq \nob{\ybul}{D}$ for any infinitesimal flag $\ybul$ over $x$ 
\end{proposition}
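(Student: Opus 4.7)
The plan is to construct, uniformly in the infinitesimal flag $\ybul$, global sections on $X'$ whose valuation vectors realise the vertices of an inverted standard simplex $\iss{\xi}$, and then conclude by convexity of the Newton--Okounkov body. The first step is to reduce to the case of a big $\QQ$-divisor: since $\{D : x \notin \Bplus(D)\}$ is open in $N^1(X)_\RR$ by Proposition~\ref{prop:openclosed}, I can write $D \equiv D_1 + A_1$ with $D_1$ a big $\QQ$-divisor still satisfying $x \notin \Bplus(D_1)$ and $A_1$ an ample $\RR$-class. Lemma~\ref{lem:nested}(2) then gives $\inob{\ybul}{D_1} \subseteq \inob{\ybul}{D}$, so it suffices to treat $D_1$.

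Assume now $D$ is big $\QQ$-Cartier with $x \notin \Bplus(D)$, and fix an ample integral Cartier divisor $A$ on $X$. The identity $\Bplus(D) = \Bstable(D - \tfrac{1}{m}A)$ for all $m \gg 0$ produces, after multiplying by $m$ and clearing a further denominator via a large positive integer $k$, an integral linear equivalence $kmD \sim kA + kmN$, where $N$ is an effective $\QQ$-divisor with $x \notin \Supp N$. Next, Lemma~\ref{lem:2inf} applied to $A$ supplies an integer $m_0 = m_0(A)$ such that, for every $\ell \geq m_0$ and every infinitesimal flag $\ybul$ over $x$, there are sections $s_0', \ldots, s_n' \in H^0(X', \pi^*(\ell A))$ with $\nu_{\ybul}$-valuations $\origin,\; \eone,\; \eone+\etwo, \ldots,\; \eone+\en$. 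Replacing $k$ by a sufficiently large multiple I may assume $\ell = k \geq m_0$.

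Let $\sigma \in H^0(X, \sO_X(kmN))$ be the section cutting out $kmN$. Since $x \notin \Supp N$, the pullback $\pi^*\sigma$ does not vanish on $E$, and its restriction to $E$ is a nowhere-vanishing section of the trivial bundle $\pi^*(kmN)|_E$; hence $\nu_{\ybul}(\pi^*\sigma) = \origin$. By multiplicativity of the valuation, the products $s_i' \cdot \pi^*\sigma \in H^0(X', \pi^*(kmD))$ share the valuations of the $s_i'$, so after normalising by $km$ the body $\inob{\ybul}{D}$ contains the vertices $\origin,\; \tfrac{1}{km}\eone,\; \tfrac{1}{km}(\eone+\etwo),\ldots,\;\tfrac{1}{km}(\eone+\en)$ of $\iss{1/(km)}$. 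Convexity then yields $\iss{\xi} \subseteq \inob{\ybul}{D}$ with $\xi = 1/(km)$, and as neither $k$ nor $m$ depends on $\ybul$, this choice is uniform over all flags.

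The main obstacle I anticipate is threading the $\RR$-to-$\QQ$ reduction correctly and extracting the effective decomposition $mD \sim A + mN$ with $N$ avoiding $x$ out of the augmented base locus hypothesis; once that setup is in place, the essential work is done by Lemma~\ref{lem:2inf}, which supplies uniformity in $\ybul$ via Bertini-type constructions downstairs on $X$. The remainder is the routine additive behaviour of the valuation map under products of sections.
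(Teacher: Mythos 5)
Your proof is correct and follows essentially the same route as the paper's: reduce to the $\QQ$-Cartier case, use $\Bplus(D)=\Bstable(D-\tfrac{1}{m}A)$ to decompose a multiple of $D$ as an ample part plus an effective part avoiding $x$, apply Lemma~\ref{lem:2inf} to the ample part, and conclude by multiplicativity of the valuation and convexity. The only cosmetic difference is that you realise the ``non-vanishing'' factor as a section downstairs on $X$ cutting out $kmN$ with $x\notin\Supp N$, whereas the paper works directly with a section $s\in H^0\bigl(X',\sO_{X'}(\pi^*(m(D-A)))\bigr)$ not vanishing at $z=Y_n$; since $\pi_*\sO_{X'}=\sO_X$ these are the same thing.
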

  
\begin{proof}
This is a modification of the proof of \cite{KL15}*{Theorem B}  using Lemma~\ref{lem:2inf};  the basic strategy is the same. 

Let us first suppose  that  $D$ is  $\QQ$-Cartier. By assumption $x\notin \Bplus(D) = \Bstable(D-A)$ for some small ample $\QQ$-Cartier  divisor $A$. Note also that by 
$\Bstable(\pi^*(D-A))=\pi^{-1}(\Bstable(D-A))$ this gives 
\[
\Bstable(\pi^*(D-A))\ \bigcap \ E \equ \varnothing
\]
as well. Choose a positive integer  $m$ large and divisible enough  
such that $\pi^*(mA)$  becomes integral,  satisfies the conclusions of  Lemma~\ref{lem:2inf}, and  $\Bstable(\pi^*(D-A))=\textup{Bs}(\pi^*(m(D-A)))$ 
set-theoretically.  

Since $z\notin \textup{Bs}(\pi^*(m(D-A)))$,  there exists a section $s\in H^0(X',\sO_{X'}(\pi^*(mD-mA)))$ with $s(z)\neq 0$, i.e. 
$\nu_{Y_{\bullet}}(s)=\origin$. Furthermore,  Lemma~\ref{lem:2inf} provides   global  sections $s_0,\ldots, s_n\in H^0(X',\sO_{X'}(\pi^*(mA)))$ such that 
$\nu_{Y_{\bullet}}(s_0)\equ \origin$, $\nu_{Y_{\bullet}}(s_1)=\eone$ and $\nu_{Y_{\bullet}}(s_i)=\eone+\ei$ for all $2\leq i\leq n$. 

Multiplicativity of the valuation map $\nu_{\ybul}$ then gives 
\[
\nu_{Y_{\bullet}}(s\otimes s_0)\equ \origin, \nu_{Y_{\bullet}}(s\otimes s_1)\equ \eone \text{ and } \nu_{Y_{\bullet}}(s\otimes s_i)=\eone+\ei \ \ \text{for all $2\leq i\leq n$.}
\]
By the construction of Newton--Okounkov bodies, then $\iss{1/m}\subseteq \inob{\ybul}{D}$. 

Next, let $D$ be a big $\RR$-divisor for which $x\notin\Bplus(D)$, and let  $A$ be an ample $\RR$-divisor such that  $D-A$ is a $\QQ$-divisor,  and 
$\Bplus(D)=\Bplus(D-A)$. Then we have $x\notin\Bplus(D-A)$, therefore 
\[
\iss{\xi}\dsubseteq \inob{\ybul}{D-A} \dsubseteq \inob{\ybul}{D} 
\]
for some positive number $\xi$, according to the $\QQ$-Cartier case and  Lemma~\ref{lem:nested}. 
\end{proof}

Just as in the surface case, $x\notin\Bplus(D)$ implies that $\inob{\ybul}{D}$ will contain an inverted standard simplex of some size, hence it makes sense to ask how large these simplices  can become (cf.  \cite{KL14}*{Definition 4.5}).
  
 \begin{definition}(Largest inverted simplex constant)
  Let $X$ be a projective variety, $x\in X$ a smooth point on $X$, and $D$ a big $\RR$-divisor with $x\notin \Bminus(D)$. For an infinitesimal flag $\ybul$ over $x$ write 
  \[
   \xi_{\ybul}(D;x) \deq \sup\st{\xi\geq 0\mid \iss{\xi}\subseteq \inob{\ybul}{D}}\ .
  \]
  The \emph{largest inverted simplex constant} $\xi(D;x)$ of $D$ at $x$ is then defined as 
  \[
   \xi(D;x) \deq \sup_{\ybul} \xi_{\ybul}(D;x)\ ,
  \]
  where $\ybul$ runs through all infinitesimal flags over $x$. Moreover, if $x\in \Bminus(D)$, then let $\xi(D;x)=0$. 
 \end{definition}

\begin{remark}\label{rmk:xi continuous}
As  Newton--Okounkov bodies are homogeneous, so is  $\xi(\ \cdot\ ;x)$ as a function on $N^1(X)_\RR$. Although it is not a priori clear if $\xi(\ \cdot\ ;x)$ should be 
continuous, a bit of thought will convince that this is indeed the case over the domain where $x\notin \Bplus(D)$. 

First,  Corollary~\ref{cor:infinitesimal uniform}  below shows  that 
$\xi_{Y_{\bullet}}(D;x)$ is in fact independent of  $Y_{\bullet}$, therefore we can use one flag for all $\RR$-divisor classes. The natural inclusion
\[
\Delta_{Y_{\bullet}}(D) + \Delta_{Y_{\bullet}}(D')\dsubseteq \Delta_{Y_{\bullet}}(D+D')
\]
shows that  $\xi(\cdot;x)$ is in fact a concave  function on 
$\Bbig(X)\setminus B_+(x)$. This  latter is an open   subset of $\textup{N}^1(X)_\RR$,  therefore $\xi(\ \cdot\ ;x)$ is continuous on its domain. For further results regarding continuity, we advise the reader to look at Corollary~\ref{cor:xi continuous} and  Section 5. 
\end{remark}

\begin{proposition}\label{prop:uniformity for inf flags}
Let $X$ be a normal projective  variety, $x\in X$ a smooth point and $D$ a big $\RR$-Cartier divisor on $X$. Assume that 
$\iss{\xi}\subseteq \inob{\ybul}{D}$ for some infinitesimal flag $\ybul$ over $x$. Then 
$\iss{\xi}\subseteq \inob{\ybul'}{D}$ for all infinitesimal flags $\ybul'$ over $x$. 
\end{proposition}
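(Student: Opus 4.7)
Proof plan.

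I plan to prove the proposition via a volume-comparison argument whose central observation is that the $n$-dimensional Euclidean volume of the ``lower slab'' $\inob{\ybul}{D}\cap\st{\nu_1\leq\xi}$ of an infinitesimal Newton--Okounkov body is independent of the choice of infinitesimal flag over $x$.

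Setting $\mu\deq\mu(D;x)$, Proposition~\ref{prop:compute} identifies the upper slab $\inob{\ybul}{D}_{\nu_1\geq t}$ with a translate of $\nob{\ybul}{\pi^*D - tE}$ for every $t\in[0,\mu)$; by the Lazarsfeld--Musta\c t\u a volume formula its $n$-dimensional volume therefore equals $\vol(\pi^*D-tE)/n!$, a purely numerical quantity and in particular flag-independent. Subtracting from the (also flag-independent) total volume $\vol(D)/n!$ yields that $\vol_n(\inob{\ybul}{D}\cap\st{\nu_1\leq t})$ does not depend on the choice of infinitesimal flag either. The boundary value $t=\mu$ is handled by continuity (or by the trivial observation that in that degenerate case the full NO body already equals $\iss{\mu}$).

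To conclude, I will first apply this to the given flag $\ybul$: the hypothesis, combined with Proposition~\ref{prop:inverted}, produces the chain
\[
\iss{\xi} \dsubseteq \inob{\ybul}{D}\cap\st{\nu_1\leq\xi} \dsubseteq \iss{\mu}\cap\st{\nu_1\leq\xi} \equ \iss{\xi},
\]
where the final equality is an elementary convex-geometric identity (both sides have the standard $(n-1)$-simplex of size $s$ as their slice at height $\nu_1=s$ for every $s\in[0,\xi]$, which also forces $\xi\leq\mu$). Consequently $\inob{\ybul}{D}\cap\st{\nu_1\leq\xi}=\iss{\xi}$, which has volume $\xi^n/n!$. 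Applying the flag-independence statement to $\ybul'$ then shows that $\inob{\ybul'}{D}\cap\st{\nu_1\leq\xi}$ is a closed convex subset of $\iss{\xi}$ (again by Proposition~\ref{prop:inverted}) with volume equal to that of $\iss{\xi}$; the standard fact that a closed convex subset of a convex body having the full volume of that body coincides with it gives $\inob{\ybul'}{D}\cap\st{\nu_1\leq\xi}=\iss{\xi}$, and in particular $\iss{\xi}\subseteq\inob{\ybul'}{D}$, as desired. The key mechanism is the flag-independence of slab volumes, which falls out essentially for free by combining Proposition~\ref{prop:compute} with the Lazarsfeld--Musta\c t\u a volume formula; all remaining steps are elementary convex-geometric bookkeeping.
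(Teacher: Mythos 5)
Your proof is correct, but takes a genuinely different route from the paper's. The paper works with the $(n-1)$-dimensional slice of $\inob{\ybul}{D}$ at the level $\nu_1=\xi'$: after passing to $B=\pi^*D-\xi'E$ it invokes the volume-growth criterion of Fulger--Koll\'ar--Lehmann (\cite{FKL}*{Theorem A}) to deduce $E\nsubseteq\Bplus(B)$, which is precisely what is needed to apply the slicing theorem \cite{LM}*{Theorem 4.24} and identify the slice with the restricted Newton--Okounkov body $\Delta_{\ybul|E}(B)$; the flag-independence is then extracted from the restricted volume $\vol_{X'|E}(B)$. You instead compare $n$-dimensional volumes of the lower slabs $\inob{\ybul}{D}\cap\st{\nu_1\leq t}$, whose flag-independence drops out directly from Proposition~\ref{prop:compute} and the Lazarsfeld--Musta\c t\u a volume formula, with no need for restricted volumes, the slicing theorem, or \cite{FKL}. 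This is both more elementary and structurally cleaner, and it also sidesteps the paper's preliminary reduction to rational $\xi'$ and $\QQ$-divisors. One consequence worth noting: the normality hypothesis in the statement is present in the paper precisely so that \cite{FKL}*{Theorem A} can be invoked (the paper remarks on this), while your argument never uses that result, so normality plays no visible role in it beyond whatever is needed for the basic volume formula on $X'$. The only points that deserve an explicit sentence in a write-up are (i) the boundary hyperplane $\st{\nu_1=t}$ has measure zero, so the slab volumes add to $\vol_X(D)/n!$, and (ii) the case $\xi=\mu(D;x)$ must be handled by a limit or by observing that then $\inob{\ybul}{D}=\iss{\mu}$ outright; you flag both, so the argument is complete.
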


\begin{remark}
Normality is used  in \cite{FKL}*{Theorem A}, a key ingredient of the proof. The cited result studies the question when the support of an effective 
$\RR$-divisor  is contained in certain augmented base loci  in terms of  the variation of the volume function. 
\end{remark}

\begin{proof}
The argument below works only for $\QQ$-divisors, passing to the limit delivers the general case (recall that restricted Newton--Okounkov bodies behave in a continuous
fashion by \cite{LM}*{Example 4.22}).  Assume  that $D$ is a big $\QQ$-divisor on $X$
For $\xi'\in(0,\xi)$, write  $\Delta_{\xi'}^{n-1}\subseteq \RR^{n-1}$ for standard  simplex of size  $\xi'$ and dimension $n-1$. 

Our  goal is then  to show that 
\[
\Delta_{\ybul'}(\pi^*(D)-\xi'E) \bigcap \{0\}\times\RR^{n-1} \equ  \Delta_{\xi'}^{n-1}
\]
for any infinitesimal flag $\ybul'$ over $x$. By continuity it suffices to check this for rational values of $\xi'$. 

So,  fix a rational number $\xi'\in(0,\xi)$ and denote by  $B\deq \pi^*(D)-\xi'E$. Obviously, 
\[
\Delta_{\ybul}(B+\lambda E) \equ  \Delta_{\ybul}(\pi^*D-(\xi'-\lambda)E) 
\]
for any $\lambda<\xi'$. The condition $\iss{\xi}\subseteq \inob{\ybul}{D}$ and  Proposition~\ref{prop:compute} imply
\[
\vol_{\RR^n}\big(\Delta_{\ybul}(B+\lambda E) \big) \ > \ \vol_{\RR^n}\big(\Delta_{\ybul}(B)\big) 
\]
for any rational number $0<\lambda<\xi'$. Then \cite{LM}*{Theorem A} gives  $\vol_X(B+\lambda E)>\vol_X(B)$, which, via 
\cite{FKL}*{Theorem B} leads to  $E\nsubseteq \Bplus(B)$.

The significance of this condition is that it grants us access to the slicing theorem  \cite{LM}*{Theorem 4.24}. In particular, 
\[
\nob{\ybul|E}{B} \equ \nob{\ybul}{B}|_{x_1=0} \equ \Delta_{\xi'}^{n-1}\ , 
\]
where left-hand side denotes  the appropriate  restricted Newton--Okounkov body (see \cite{LM}*{$(2.7)$}). 
By the same token, since  $E\nsubseteq \Bplus(B)$,  we have 
\[
\vol_{X'|E}(B) \equ  (n-1)!\vol_{\RR^{n-1}}(\Delta_{Y_{\bullet}|E}(B)) \equ (n-1)!\vol_{\RR^{n-1}}(\Delta_{\xi'}^{n-1}) \ . 
\]
Note that both extremes are independet of the choice of the flag, hence we have 
\[
\vol_{\RR^{n-1}}(\Delta_{\ybul'|E}(B)) \equ  \vol_{\RR^{n-1}}(\Delta_{\xi'}^{n-1}) 
\]
for  any   infinitesimal flag $\ybul'$ on $X'$.  

It follows from  Proposition~\ref{prop:inverted} that 
\[
\Delta_{\ybul'|E}(B) \equ  \Delta_{\ybul'}(B)|_{x_1=0} \dsubseteq  \Delta_{\xi'}^{n-1}\ ,
\]
however, as the the two convex bodies have equal volume, they must coincide. This means that $\Delta_{\ybul'}(B)|_{x_1=0}=\Delta_{\xi'}^{n-1}$
as required. 
\end{proof}
  
\begin{corollary}\label{cor:infinitesimal uniform}
With notation as above, $\xi(D;x)=\xi_{\ybul}(D;x)$ for all infinitesimal flags $\ybul$ over $x$. 
\end{corollary}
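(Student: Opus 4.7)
The plan is to observe that this corollary is essentially a direct consequence of Proposition~\ref{prop:uniformity for inf flags}, which has just been established. The heart of the matter is that Proposition~\ref{prop:uniformity for inf flags} shows the containment $\iss{\xi}\subseteq \inob{\ybul}{D}$ does not depend on the choice of the infinitesimal flag $\ybul$ over $x$, only on $\xi$ and $D$.

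More concretely, the first step will be to fix two arbitrary infinitesimal flags $\ybul$ and $\ybul'$ over $x$ and consider the two sets
\[
S_{\ybul} \deq \st{\xi\geq 0\mid \iss{\xi}\subseteq \inob{\ybul}{D}}, \qquad S_{\ybul'} \deq \st{\xi\geq 0\mid \iss{\xi}\subseteq \inob{\ybul'}{D}}.
\]
Proposition~\ref{prop:uniformity for inf flags} applied in both directions gives $S_{\ybul}=S_{\ybul'}$. Taking suprema then yields $\xi_{\ybul}(D;x)=\xi_{\ybul'}(D;x)$, and since this equality holds for any pair of infinitesimal flags, the supremum over all flags coincides with the value at any single one, i.e.\ $\xi(D;x)=\xi_{\ybul}(D;x)$.

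The only edge case to mention is when $x\in \Bminus(D)$: then $\xi(D;x)=0$ by definition, and by Theorem~\ref{thm:bminus inf} the origin is not contained in $\inob{\ybul}{D}$ for any infinitesimal flag, so $S_{\ybul}=\varnothing$ for each $\ybul$; adopting the convention $\xi_{\ybul}(D;x)=0$ in this case (consistent with the definition of $\xi(D;x)$), the equality still holds. There is no serious obstacle here; all the technical content has already been absorbed into Proposition~\ref{prop:uniformity for inf flags}, whose proof exploited the slicing theorem of \cite{LM} together with the volume-theoretic input from \cite{FKL}.
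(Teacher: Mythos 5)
Your proof is correct and is exactly the intended argument: the paper states the corollary without proof as an immediate consequence of Proposition~\ref{prop:uniformity for inf flags}, and your observation that the sets $S_{\ybul}$ are flag-independent (hence so are their suprema) is precisely the point. The edge case $x\in\Bminus(D)$ is handled appropriately; one could also simply note that $\xi_{\ybul}(D;x)$ is only defined in the paper for $x\notin\Bminus(D)$, so the corollary need only be verified there.
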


\subsection{Inverted standard simplices and jet separation}

Arguably one of the most important ingredients of the proof of Theorem~\ref{thm:main augmented} is the following connection between 
infinitesimal Newton--Okounkov bodies and jet separation of adjoint bundles.

\begin{proposition}\label{prop:jet separation}(Infinitesimal Newton--Okounkov bodies and jet separation)
Let $X$ be an $n$-dimensional smooth projective variety, $D$ a big Cartier divisor,  and $x$ be a (closed) point on $X$. Assume that there exists  a positive real number $\epsilon$  and a natural number $k$ with the property that $\Delta^{-1}_{n+k+\epsilon}\subseteq \Delta_{\ybul'}(\pi^*(D))$ for every  infinitesimal flag $\ybul'$ over $x$. Then $K_X+D$ separates $k$-jets.
\end{proposition}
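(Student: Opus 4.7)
The strategy is to realize, within the Newton--Okounkov body formalism, the classical multiplier-ideal approach to jet separation: extract from the hypothesis an effective $\QQ$-divisor on $X$ whose multiplier ideal lies inside $\mathfrak{m}_x^{k+1}$, correct it so as to pass from bigness to nefness via Kodaira's lemma, and conclude via Nadel vanishing.

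Fix an infinitesimal flag $\ybul$ over $x$. Since the vertex $(n+k+\epsilon)\eone$ of $\iss{n+k+\epsilon}$ lies in $\inob{\ybul}{D}$, any sufficiently nearby rational interior point $v \in \inob{\ybul}{D}$ with $v_1 > n+k$ (e.g.\ $v = (n+k+\epsilon/2,\delta,\dots,\delta)$ with $\delta > 0$ rational and small) is valuative by Lemma~\ref{lem:valuative}. Thus for some $m$ divisible enough there is a section $s \in H^0(X', m\pi^*D) = H^0(X, mD)$ with $\nu_{\ybul}(s) = mv$. Since $\nu_1(\pi^*s) = \mult_E \pi^*s = \mult_x s$, the pushdown $G := \tfrac{1}{m}\operatorname{div}(s)$ is an effective $\QQ$-divisor on $X$ with $G \equiv D$ and $\mult_x G = v_1 > n+k$. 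Setting $F := \lambda G$ for $\lambda < 1$ close enough to $1$ that $\mult_x F > n+k$ still, the standard estimate on the point blow-up $\pi\colon X' \to X$ (reading off the coefficient of $E$ in $K_{X'/X} - \lfloor \pi^* F \rfloor$) yields
\[
\mathcal{J}(X, F) \;\subseteq\; \mathfrak{m}_x^{k+1}.
\]

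To handle the fact that $D$ is only assumed big, so $D - F \equiv (1-\lambda)D$ may fail to be nef, we correct $F$ by Kodaira's lemma. Choose a small ample $\QQ$-divisor $A$ with $(1-\lambda)D - A$ still big, and pick an effective $\QQ$-divisor $\mu \equiv (1-\lambda)D - A$. Set $F' := F + \mu$; then $F'$ is effective, $D - F' \equiv A$ is ample, and the monotonicity of multiplier ideals under addition of effective divisors gives $\mathcal{J}(X, F') \subseteq \mathcal{J}(X, F) \subseteq \mathfrak{m}_x^{k+1}$. Nadel vanishing applied to $F'$ now yields
\[
H^1\bigl(X,\, (K_X + D) \otimes \mathcal{J}(X, F')\bigr) \;=\; 0,
\]
and combining with the sheaf surjection $\mathcal{O}_X/\mathcal{J}(X, F') \twoheadrightarrow \mathcal{O}_X/\mathfrak{m}_x^{k+1}$ produces the surjection
\[
H^0(X, K_X+D) \;\twoheadrightarrow\; H^0\bigl(X, (K_X+D) \otimes \mathcal{O}_X/\mathfrak{m}_x^{k+1}\bigr),
\]
which is exactly the statement that $K_X + D$ separates $k$-jets at $x$.

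The main conceptual step is the first one, where the convex-geometric input (an interior rational point of $\inob{\ybul}{D}$ with first coordinate $>n+k$) is converted via the valuative-point lemma into an effective $\QQ$-divisor on $X$ with controlled multiplicity at $x$; this is precisely what the Newton--Okounkov body formalism contributes. The bigness-versus-nefness gap is then handled cleanly by the Kodaira correction, using crucially that multiplier ideals only shrink when effective divisors are added, so $\mathcal{J}(X,F') \subseteq \mathfrak{m}_x^{k+1}$ persists regardless of the behaviour of $\mu$ at $x$. Notably, no Angehrn--Siu style tie-breaking or isolation of the non-klt locus at $x$ is required, since the multiplicity bound $\mult_x F > n+k$ alone already forces the containment $\mathcal{J}(X,F) \subseteq \mathfrak{m}_x^{k+1}$.
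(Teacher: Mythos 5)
Your proof runs along the right classical lines up to and including Nadel vanishing, but there is a genuine gap in the last step, and your explicit claim that ``no Angehrn--Siu style tie-breaking or isolation of the non-klt locus at $x$ is required'' is precisely where it fails. Knowing $\mathcal{J}(X,F') \subseteq \mathfrak{m}_x^{k+1}$ together with $H^1(X,\sO_X(K_X+D)\otimes\mathcal{J}(X,F'))=0$ does \emph{not} yield the surjection $H^0(K_X+D) \twoheadrightarrow H^0((K_X+D)\otimes\sO_X/\mathfrak{m}_x^{k+1})$. Nadel gives you $H^0(K_X+D) \twoheadrightarrow H^0((K_X+D)\otimes\sO_X/\mathcal{J}(X,F'))$, and you compose with the sheaf surjection $\sO_X/\mathcal{J}(X,F')\twoheadrightarrow \sO_X/\mathfrak{m}_x^{k+1}$, but that sheaf surjection need not be surjective on $H^0$: its kernel $(K_X+D)\otimes\mathfrak{m}_x^{k+1}/\mathcal{J}(X,F')$ can be supported on positive-dimensional subvarieties passing through $x$ (the non-klt locus of $F'$ is exactly of this type when the section defining $G$ has high multiplicity along components through $x$, and the correction term $\mu$ only makes the multiplier ideal smaller and the co-support larger), and a nonzero coboundary into the $H^1$ of that kernel obstructs the lift of jets. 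What is needed --- and what the classical argument always supplies through tie-breaking --- is that $\sO_X/\mathcal{J}(X,F')$ split off $\sO_X/\mathfrak{m}_x^{k+1}$ as a direct summand, i.e.\ that $x$ be scheme-theoretically isolated in the non-klt locus.

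The paper's proof is built precisely to manufacture this isolation. It works on $X'$ with $B := \pi^*D - (n+k)E$; the hypothesis on \emph{every} infinitesimal flag (not just one, as in your argument) gives $\origin\in\Delta_{Y'_\bullet}(B)$ for all such flags, hence by Theorem~\ref{thm:bminus inf} that $\Bminus(B)\cap E = \varnothing$, whence $\Zeroes(\sJ(X',\|B\|))\cap E=\varnothing$. This disjointness is exactly what produces the direct-sum quotient $\sZ\oplus\sO_{(k+1)E}$ in the short exact sequence, so that after Nadel vanishing (for the asymptotic multiplier ideal, since $B$ is merely big) the $H^0$-surjection projects onto the $\sO_{(k+1)E}$-factor. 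Note also that the paper's route avoids your Kodaira-lemma correction entirely, because Nadel vanishing for asymptotic multiplier ideals only needs $B$ big. To repair your argument you would have to add a tie-breaking step guaranteeing isolation of $x$ in the non-klt locus of $F'$, or reorganize it along the paper's lines so that the asymptotic multiplier ideal's co-support provably misses the exceptional divisor.
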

\begin{proof}
By definition (see \cite{D}, also \cite{PAGI}*{Definiton 5.1.15} and  \cite{PAGI}*{Proposition 5.1.19}), what we need to prove is that the 
restriction map 
\[
H^0(X,\sO_X(K_X+D)) \ \longrightarrow \ H^0(X,\sO_X(K_X+D)\otimes {\sO_{X,x}}/{\m^{k+1}_{X,x}}) 
\]
is surjective.

Transferring the question to the blow-up $X'$, this is equivalent to requiring 
\begin{equation}\label{eq:surjective}
H^0(X,\sO_{X'}(\pi^*(K_X+D))) \ \rightarrow \ H^0(X',\sO_{X'}(\pi^*(K_X+D))\otimes {\sO_{X'}}/{\sO_{X'}(-(k+1)E)})
\end{equation}
to be  surjective. 

In order to do check  surjectivity  in $(\ref{eq:surjective})$, let us write  $B\deq\pi^*(D)-(n+k)E$. By Proposition~\ref{prop:compute}, we have
\[
\Delta_{Y'_{\bullet}}(B) \equ  \Delta_{Y'_{\bullet}}(\pi^*(D))_{x_0\geq n+k} \ - \ (n+k,0,\ldots,0)  
\]
for any  infinitesimal flag $Y'_{\bullet}$ over the point $x$. In particular, $B$ is a big line bundle with the property that the origin $\origin\in\Delta_{Y'_{\bullet}}(B)$ for any infinitesimal flag $Y'_{\bullet}$. As a consequence of Theorem~\ref{thm:bminus inf}, we obtain that $\Bminus(B)\cap E=\varnothing$. Thus 
 $\Zeroes(\sJ(X',\|B\|))\cap E=\varnothing$ via \cite{ELMNP1}*{Corollary 10}. 

To finish off the proof, we will make use of a variant of the classical argument to deduce the required surjectivity. Recall  that $B=\pi^*D-(n+k)E$, and  $K_{X'}=\pi^*K_X+(n-1)E$, therefore we have the short exact sequence
\[
0\rightarrow \sO_{X'}(K_{X'}+B)\otimes \sJ(X',||B||)\rightarrow \sO_{X'}(\pi^*(K_X+D)) \rightarrow 
\sO_{X'}(\pi^*(K_X+D)) \otimes \big( \sZ\oplus \sO_{(k+1)E}\big)\rightarrow 0 \ ,
\]
where $\sZ$ stands for the structure sheaf determined by the closed subscheme associated to the ideal $\sJ(X',||B||))$; note that this latter has support disjoint from $E$.

Since $B$ is a big line bundle, by Nadel's vanishing for asymptotic multiplier ideals  \cite{PAGII}*{Theorem~11.2.12.(ii)}  we have
\[
H^1(X', \sO_{X'}(K_{X'}+B)\otimes \sJ(X',||B||)) \equ 0 \ ,
\]
therefore the restriction map 
\[
\HH{0}{X'}{\sO_{X'}(\pi^*(K_X+D))} \longrightarrow  \HH{0}{X'}{\sO_{X'}(\pi^*(K_X+D)) \otimes \big( \sZ\oplus \sO_{(k+1)E}\big)}   
\]
is surjective, but then so is 
\[
 \HH{0}{X'}{\sO_{X'}(\pi^*(K_X+D))} \longrightarrow \HH{0}{X'}{\sO_{X'}(\pi^*(K_X+D))\otimes \sO_{(k+1)E}}\ ,
\]
as required. 
\end{proof}  

Now we are in a position to finish the proof of Theorem~\ref{thm:main augmented}; our main tool is going to be the connection between moving Seshadri constants 
and largest inverted simplex constants via jet separation (cf. \cite{ELMNP2}*{Proposition 6.6})
 
\begin{proposition}\label{prop:seshadri inverted}
Let  $D$ be a big $\RR$-divisor on a smooth projective variety $X$ and $x\in X$ a closed point. If $\xi(D;x)>0$, then $\xi(D;x)=\epsilon(||D||;x)$. 
\end{proposition}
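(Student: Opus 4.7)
The plan is to prove both inequalities $\xi(D;x) \geq \epsilon(\|D\|;x)$ and $\xi(D;x) \leq \epsilon(\|D\|;x)$, using jet separation (in both directions) as the bridge. By Corollary~\ref{cor:infinitesimal uniform}, a single infinitesimal flag $\ybul$ suffices; by continuity of both sides in $D$, we may also assume $D$ is a big $\QQ$-Cartier divisor. Fix local coordinates $u_1,\ldots,u_n$ at $x$ adapted to $\ybul$ as in the proof of Proposition~\ref{prop:inverted}.

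\emph{Direction $\xi(D;x) \geq \epsilon(\|D\|;x)$.} Recall the standard characterization $\epsilon(\|D\|;x) = \limsup_{p} s(pD;x)/p$, where $s(pD;x)$ is the largest $k$ for which $pD$ separates $k$-jets at $x$. The key observation is that jet separation directly produces valuative points: if $pD$ separates $k$-jets, then for each $(a_1,\ldots,a_n) \in \NN^n$ with $|a| \leq k$ there exists $\tilde s \in H^0(X,pD)$ with leading term $u_1^{a_1}\cdots u_n^{a_n}$ at $x$, and the calculation from Proposition~\ref{prop:inverted} gives $\nu_{\ybul}(\pi^*\tilde s) = (|a|, a_1, \ldots, a_{n-1})$. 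Rescaling by $p$, these points are exactly the lattice points of $p\cdot\iss{k/p}$, whose convex hull is $\iss{k/p}$. Hence $\iss{k/p} \subseteq \inob{\ybul}{D}$, so $\xi(D;x) \geq s(pD;x)/p$ for every $p$; taking $\limsup$ gives the inequality.

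\emph{Direction $\xi(D;x) \leq \epsilon(\|D\|;x)$.} Fix $\xi'' < \xi' < \xi = \xi(D;x)$ and aim to produce, for all $m \gg 0$, many jets separated by $mD$. Choose an ample divisor $H$ on $X$ with $H - K_X$ nef. Since $\pi^*(H - K_X)$ is nef, Theorem~\ref{thm:bminus inf} yields $\origin \in \nob{\ybul'}{\pi^*(H-K_X)}$ for every infinitesimal flag $\ybul'$. The Minkowski containment $\nob{\ybul'}{D_1} + \nob{\ybul'}{D_2} \subseteq \nob{\ybul'}{D_1+D_2}$ applied to $D_1 = mD - H$ and $D_2 = H - K_X$ therefore yields
\[
\inob{\ybul'}{mD - H} \ \subseteq \ \inob{\ybul'}{mD - K_X}.
\]
By homogeneity and continuity of $\xi(\,\cdot\,;x)$ on $\Bbig(X)\setminus B_+(x)$ (Remark~\ref{rmk:xi continuous}), $\frac{1}{m}\xi(mD - H;x) = \xi(D - H/m;x) \to \xi(D;x) = \xi$ as $m \to \infty$. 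Hence for all $m \gg 0$ we get $\iss{m\xi'} \subseteq \inob{\ybul'}{mD - H} \subseteq \inob{\ybul'}{mD-K_X}$ for every $\ybul'$. Applying Proposition~\ref{prop:jet separation} to $mD - K_X$ with $n+k+\epsilon = m\xi'$, the bundle $K_X + (mD - K_X) = mD$ separates $\lfloor m\xi' - n - 1\rfloor$ jets at $x$. Dividing by $m$ and letting $m \to \infty$ yields $\epsilon(\|D\|;x) \geq \xi''$; finally $\xi'' \to \xi$ gives the claim.

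\emph{The main obstacle} is legitimizing the continuity step for $\xi(\,\cdot\,;x)$ at $D$, which \emph{a priori} requires $D \in \Bbig(X)\setminus B_+(x)$. Since this is exactly the implication $(3)\Rightarrow(1)$ of Theorem~\ref{thm:main augmented}, we avoid circularity by invoking it only after Direction 1 has already forced $\epsilon(\|D\|;x) > 0$ (via ELMNP, $\epsilon(\|D\|;x) > 0 \Leftrightarrow x \notin B_+(D)$), or alternatively by observing that the jet-separating sections constructed in Direction 1 from a sub-simplex $\iss{\xi/2}$ suffice to place $x$ outside $B_+(D)$. Once this is settled, openness of $N^1(X)_\RR \setminus B_+(x)$ (Proposition~\ref{prop:openclosed}) ensures that $D - H/m$ also avoids $B_+(x)$ for $m \gg 0$, and the continuity step is valid.
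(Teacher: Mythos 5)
Your Direction~$\xi \geq \epsilon$ is correct and matches the paper's corresponding half: jet-separating sections produce lattice points of $\iss{s(pD;x)/p}$, hence the containment. The gap is in Direction~$\xi \leq \epsilon$, and your attempted repairs of the circularity do not actually work.

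The decisive step in your second direction is $\tfrac{1}{m}\xi(mD-H;x)=\xi(D-H/m;x)\to\xi(D;x)$, which you justify by continuity of $\xi(\,\cdot\,;x)$. But as Remark~\ref{rmk:xi continuous} makes clear, continuity of $\xi$ is only established on $\Bbig(X)\setminus B_+(x)$; to invoke it at $D$ you must already know $D\notin B_+(x)$. That is precisely what is at stake: showing $\xi(D;x)>0\Rightarrow \epsilon(\|D\|;x)>0$ is equivalent to showing $\xi(D;x)>0\Rightarrow D\notin B_+(x)$. Your two proposed escapes do not close this loop. First, Direction~1 proves $\xi\geq\epsilon$, which forces nothing about the positivity of $\epsilon$; if $x\in B_+(D)$ one just gets the trivial $\xi\geq 0$. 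Second, ``jet-separating sections from a sub-simplex'' is not something Direction~1 produces (it goes the other way); the conversion from a simplex to jet separation is Proposition~\ref{prop:jet separation}, which applied to $mD$ yields jet separation for $K_X+mD$ and hence $x\notin B_+(K_X+mD)$ -- not $x\notin B_+(D)$, and closedness of $B_+(x)$ does not let you pass to the limit.

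The paper sidesteps exactly this trap by \emph{not} subtracting $K_X$ before invoking Proposition~\ref{prop:jet separation}. It applies the proposition to $mD$ directly (whose infinitesimal body contains $\iss{m\xi}$ for every infinitesimal flag, by homogeneity and Corollary~\ref{cor:infinitesimal uniform}), obtaining $s(K_X+mD;x)\gtrsim m\xi-n$, whence $\epsilon(\|K_X+mD\|;x)\gtrsim m\xi-n$ by ELMNP's Proposition~6.6. The key ingredient is then the \emph{unconditional} continuity of the moving Seshadri constant on all of $N^1(X)_\RR$ (ELMNP Theorem~6.2, extended by zero), which gives $\epsilon(\|D\|;x)=\lim_m \epsilon(\|K_X+mD\|;x)/m \geq \xi$ without ever presupposing $D\notin B_+(x)$. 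So: keep Proposition~\ref{prop:jet separation} as your bridge, but replace the continuity of $\xi$ (known only conditionally) by the continuity of $\epsilon(\|\,\cdot\,\|;x)$ (known unconditionally from ELMNP), and drop the ample twist $H$ with $H-K_X$ nef -- it does not buy you anything here, since the containment you would need in $\inob{\ybul'}{mD-H}$ is exactly what you cannot get without the circularity.
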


\begin{proof}
Let us first assume that  $D$ is a big $\QQ$-divisor; we wish to show that 
\begin{equation}\label{eq:jetsinverse}
\xi(D;x) \equ  \limsup_{m\rightarrow \infty} \frac{s(mD;x)}{m} \equ  \epsilon(||D||;x) \ , 
\end{equation}
where the latter equality is  \cite{ELMNP2}*{Proposition 6.6}. Then one can go on and use $(\ref{eq:jetsinverse})$ and Proposition~\ref{prop:jet separation} 
to deduce $\xi(D;x)=\movs{D}{x}$. 

Our first goal is to check  $\epsilon(||D||;x)\geq \xi(D;x)$. Since both expressions  are homogeneous, it will suffice  to show  $\epsilon(||D||;x)\geq n$
whenever $\xi(D;x)>n$. Let  $r>0$ be a natural number so that $rD$ becomes integral. Then, by homogeneity, $\xi(mrD;x)>mrn$, and  
Proposition~\ref{prop:jet separation} gives 
\[
s(K_X+mrD;x) \dgeq  mrn-n\ .
\] 
Consequently, by taking multiples we obtain 
\[
\frac{s(k(K_X+mrD);x)}{k} \ \geq \ mrn-n, \text{ for any } m,k\geq 1 \ ,
\]
in particular, by \cite{ELMNP2}*{Proposition 6.6} one has
\[
\epsilon(||K_X+mrD||;x) \equ  \limsup_{k\rightarrow\infty}\frac{s(k(K_X+mrD);x)}{k} \dgeq  mrn-n \ .
\]
On the other hand,  \cite{ELMNP2}*{Theorem 6.2} says that the function $N^1(X)_{\RR}\ni\alpha\mapsto \epsilon(||\alpha||;x)\in \RR_+$ is continuous, 
therefore 
\[
\epsilon((||D||;x) \equ  \frac{1}{r}\limsup_{m\rightarrow\infty}\frac{\epsilon(||K_X+mrD||;x)}{m} \ \geq \ n \ .
\]

For the converse  inequality $\epsilon(||D||;x)\leq \xi(D;x)$, we will show that whenever  $D$ is an integral divisor separating  $s$-jets at the point $x$
, then $\Delta_{s}^{-1}\subseteq\Delta_{Y_{\bullet}}(\pi^*(D))$ for any infinitesimal flag $Y_{\bullet}$ over $x$. 
Note that  Proposition~\ref{prop:uniformity for inf flags} shows that it suffices check this  for one such flag. 

To this end, choose a system of local coordinates $\{u_1,\ldots ,u_n\}$ at $x$ and choose the infinitesimal flag $\ybul$ in such a way  that each $Y_{i+1}$ is  given by the intersection of $E$ with the proper transforms of $u_1,\ldots ,u_i$. Because  $D$ separates $s$-jets at $x$, there exist sections $s_1,\ldots ,s_n\in H^0(X,\sO_X(D))$ such that $s_i=u_i^s$ locally. Analogously to the  proof of Lemma~\ref{lem:2inf}, we see  that $\nu_{Y_{\bullet}}(\pi^*(s_1))=s\cdot\eone$ and $\nu_{Y_{\bullet}}(\pi^*(s_i))=s\cdot(\eone+\textup{\textbf{e}}_{i})$. The origin is  contained in \inob{\ybul}{D} since $\xi(D;x)>0$.

Lastly, it remains to deal with the case when  $D$ is a big $\RR$-divisor, which we will do by reduction to the rational case. Fix  a sequence of ample $\RR$-divisors $(\alpha_l)_{l\in\NN}$ for which  $\lim_{l\rightarrow\infty}||\alpha_l||=0$ for an arbitrary norm on $\textup{N}^1(X)_{\RR}$,  $D+\alpha_l$ is a $\QQ$-divisor, and $\alpha_{l+1}-\alpha_{l}$ is ample for any $l\geq 1$. 

Then  Lemma~\ref{lem:nested} yields 
\[
\Delta_{Y_{\bullet}}(\pi^*(D)) \equ  \bigcap_{l\in\NN}\Delta_{Y_{\bullet}}(\pi^*(D+\alpha_l)) 
\]
for any infinitesimal flag $Y_{\bullet}$ over $x$. As a consequence,  $\lim_{l\rightarrow\infty}\xi(D+A_l;x) =\xi(D;x)$; however,  since each class 
$D+A_l$ is a $\QQ$-divisor, we know that $\xi(D+A_l)=\epsilon(||D+A_l||;x)$ for any $l\in\NN$. Continuity of moving Seshadri constants 
\cite{ELMNP2}*{Theorem 6.2} then concludes  the proof.
\end{proof}

\begin{proof}[Proof of Theorem~\ref{thm:main augmented}]
The implication $(1)\Rightarrow (2)$ has been taken care of in Proposition~\ref{prop:not in Bplus implies inf simplex}, as  $(2)\Rightarrow (3)$
is formal, we are left with $(3)\Rightarrow (1)$. However, if there exists an infinitesimal flag $\ybul$ over $x$ with an inverted standard 
simplex contained in it, then $\xi(D;x)>0$, hence Proposition~\ref{prop:seshadri inverted} yields $\movs{D}{x}=\xi(D;x)>0$, which by definition 
means $x\notin\Bplus(D)$. 
\end{proof}  

We obtain a sequence of interesting corollaries. 

\begin{corollary}\label{cor:seshadri inverted}
Let $D$ be a big $\RR$-divisor on a smooth projective variety $X$. Then 
\[
\xi(D;x) \equ  \epsilon(||D||;x)
\]
for any (closed) point $x\in X$.
\end{corollary}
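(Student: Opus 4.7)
The plan is to reduce this corollary to the combination of Proposition~\ref{prop:seshadri inverted} and Theorem~\ref{thm:main augmented}, treating separately the cases where $\xi(D;x)$ is positive or zero.

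First, if $\xi(D;x)>0$, then Proposition~\ref{prop:seshadri inverted} immediately yields $\xi(D;x)=\epsilon(\|D\|;x)$, so there is nothing to prove in this regime. The only remaining case is $\xi(D;x)=0$. Here I would argue that both sides vanish simultaneously. Indeed, by the very definition of the largest inverted simplex constant, $\xi(D;x)=0$ means that for every infinitesimal flag $\ybul$ over $x$ the body $\inob{\ybul}{D}$ contains no inverted standard simplex of positive size; by Theorem~\ref{thm:main augmented} this is equivalent to $x\in\Bplus(D)$ (including the boundary case $x\in\Bminus(D)$, where $\xi(D;x)=0$ by convention and where $x\in\Bminus(D)\subseteq\Bplus(D)$). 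On the other hand, by the very definition of the moving Seshadri constant extended by zero (as recorded right before the statement of \cite{ELMNP2}*{Theorem 6.2}), one has $\epsilon(\|D\|;x)=0$ precisely when $x\in B_{+}(x)$, i.e. when $x\in\Bplus(D)$.

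Putting these two observations together gives the equivalence
\[
\xi(D;x)\equ 0 \ \Longleftrightarrow\ x\in\Bplus(D) \ \Longleftrightarrow\ \epsilon(\|D\|;x)\equ 0,
\]
so both sides vanish on exactly the same set. Combining this with Proposition~\ref{prop:seshadri inverted} on the complementary set $\Bbig(X)\setminus B_{+}(x)$ gives the desired equality for every closed point $x\in X$ and every big $\RR$-divisor $D$.

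I do not expect any serious obstacle: the substantive work has been done in Proposition~\ref{prop:seshadri inverted} (which identifies the two invariants on the locus where $\xi>0$) and in Theorem~\ref{thm:main augmented} (which characterises the vanishing locus of $\xi(\,\cdot\,;x)$ as $\Bplus$). The corollary is thus just the assembly of these two ingredients, plus the bookkeeping convention $\xi(D;x)=0$ for $x\in\Bminus(D)$ built into the definition.
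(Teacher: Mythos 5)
Your proof is correct and relies on the same ingredients as the paper's: Proposition~\ref{prop:seshadri inverted} on the locus where $\xi>0$, and Theorem~\ref{thm:main augmented} to identify the common vanishing locus with $\Bplus(D)$ (the paper additionally splits the vanishing case into $x\in\Bplus\setminus\Bminus$ and $x\in\Bminus$, citing Proposition~\ref{prop:uniformity for inf flags} for the former, but the logical content is the same). The only point worth stating explicitly, which you use implicitly, is that $\movs{D}{x}>0$ whenever $x\notin\Bplus(D)$, so that the extended-by-zero moving Seshadri function vanishes exactly on $B_+(x)$; this is covered by the remark in Section 1 and the definition.
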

\begin{proof}
If $x\notin \Bplus(D)$, then this is immediate  from Theorem~\ref{thm:main augmented} and  Proposition~\ref{prop:seshadri inverted}. 
If $x\in \Bplus(D)\setminus\Bminus(D)$, then $\xi(D;x)=0$ by Proposition~\ref{prop:uniformity for inf flags} and $\epsilon(||D||;x)=0$ by definition. 
In the last case  $x\in \Bminus(D)$, both invariants are zero by definition.
\end{proof}

\begin{corollary}\label{cor:xi continuous}
For a smooth projective variety $X$ and a point $x\in X$, the function 
\begin{eqnarray*}
 \xi(\ \cdot\ ;x) \colon N^1(X)_\RR & \longrightarrow & \RR_{+} \\ D & \mapsto & \xi(D;x)
\end{eqnarray*}
is continuous. 
\end{corollary}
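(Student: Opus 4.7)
The plan is to obtain continuity as a direct consequence of Corollary~\ref{cor:seshadri inverted} combined with the continuity theorem of Ein--Lazarsfeld--Musta\c t\u a--Nakamaye--Popa (\cite{ELMNP2}*{Theorem 6.2}), which is already recorded in the Preliminaries. The essential observation is that the two functions $\xi(\,\cdot\,;x)$ and $\movs{\,\cdot\,}{x}$ coincide on $N^1(X)_\RR$ once both have been extended by zero outside their natural domains, and continuity of the latter has already been established.

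More concretely, I would first pin down the extension of $\xi(\,\cdot\,;x)$ to $N^1(X)_\RR$: the a priori definition (Definition~\ref{}) only makes sense when $\inob{\ybul}{D}$ exists, i.e., when $D$ is big, so classes outside the big cone are assigned the value $0$. Inside the big cone, Corollary~\ref{cor:seshadri inverted} already furnishes the identity $\xi(D;x) = \movs{D}{x}$: if $x\notin\Bplus(D)$ this comes from Proposition~\ref{prop:seshadri inverted} via the equivalence of Theorem~\ref{thm:main augmented}; if $x\in\Bplus(D)\setminus\Bminus(D)$ both sides vanish (the right side by the ELMNP extension convention); and if $x\in\Bminus(D)$ both vanish by definition.

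Next I would match the two extensions outside the big cone. The extension of the moving Seshadri constant from \cite{ELMNP2}*{Theorem 6.2} is by zero on the complement of $\Bbig(X)\setminus B_+(x)$ in $N^1(X)_\RR$, which in particular forces it to be zero on $N^1(X)_\RR\setminus\Bbig(X)$. This agrees with the convention chosen for $\xi(\,\cdot\,;x)$, so as functions on all of $N^1(X)_\RR$ we have $\xi(\,\cdot\,;x)\equiv\movs{\,\cdot\,}{x}$. Invoking \cite{ELMNP2}*{Theorem 6.2} then yields the desired continuity.

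There is no substantial obstacle here; the only point that requires a sentence of care is the reconciliation of the extension conventions on $\partial\Bbig(X)$ and on $B_+(x)\cap\Bbig(X)$, where both $\xi$ and the extended moving Seshadri constant must separately be shown to vanish. Once that is done, the statement is a formal consequence of the preceding results.
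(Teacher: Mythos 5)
Your argument is correct and is precisely the one the paper has in mind: the paper's proof is the one-line remark that the statement ``follows easily from Corollary~\ref{cor:seshadri inverted} and \cite{ELMNP2}*{Theorem 6.2},'' and your write-up simply spells out the (correct) bookkeeping that the two functions agree under the zero-extension conventions off the big cone and on $B_+(x)$. Nothing is missing; this is the same route, made explicit.
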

\begin{proof}
Follows easily from Corollary~\ref{cor:seshadri inverted} and \cite{ELMNP2}*{Theorem 6.2}.
\end{proof}

\begin{corollary}\label{cor:valuative}
Let $D$ be a big $\QQ$-divisor on $X$, $x\in X$ a closed point, and  $Y_{\bullet}$ an infinitesimal flag over  $x$. If 
$\Delta_{\xi}^{-1}\subseteq \Delta_{Y_{\bullet}}(\pi^*(D))$ for some $\xi>0$, then all vectors in $\Delta_{\xi}^{-1}\cap\QQ^n$
not lying on  the face generated by  the points $\lambda\cdot\eone,\lambda(\eone+\etwo), \ldots , \lambda(\eone+\en)$ are valuative. 
\end{corollary}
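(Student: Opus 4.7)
My plan is to prove valuativity of each rational point $v = (v_1, \ldots, v_n) \in \iss{\xi} \cap \QQ^n$ with $v_1 < \xi$ by constructing a positive integer $m$, with $mD$ Cartier, and a section $s \in \HH{0}{X}{\sO_X(mD)}$ whose valuation vector $\nu_{\ybul}(\pi^*s)$ equals exactly $mv$. The guiding idea is that within this framework the inverted-simplex condition $\sum_{i \geq 2} v_i \leq v_1$ is equivalent to prescribing a certain monomial appearing as the lowest-order term of a local expansion at $x$, while $x \notin \Bplus(D)$ together with the sharp jet-separation characterization of moving Seshadri constants produces sections realising such monomials.

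Concretely, I would first align coordinates with the flag: choose local coordinates $u_1, \ldots, u_n$ at $x$ so that the induced homogeneous coordinates $[y_1:\cdots:y_n]$ on $E \cong \PP^{n-1}$ satisfy $Y_{i+1} = \Zeroes(y_1, \ldots, y_i)$ for $1 \leq i \leq n-1$. This is possible because the $Y_i$ form a linear flag in $E$, and any such flag can be put in standard position by a suitable change of basis of $T_x X$. Next, Theorem~\ref{thm:main augmented} forces $x \notin \Bplus(D)$, while Corollary~\ref{cor:seshadri inverted} identifies $\xi(D;x) = \e(\|D\|;x) \geq \xi$. Invoking \cite{ELMNP2}*{Proposition 6.6} together with superadditivity of $s(\ \cdot\ ; x)$ and the strict inequality $v_1 < \xi$, one can then select $m \geq 1$, divisible by the common denominator of $v$, such that $mD$ is Cartier and $s(mD;x) \geq m v_1$, meaning $mD$ separates $m v_1$-jets at $x$.

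Set $\mathbf{a} = (mv_2, mv_3, \ldots, mv_n, m(v_1 - v_2 - \cdots - v_n)) \in \ZZ^n_{\geq 0}$, whose non-negativity is immediate from the inverted-simplex condition, and whose weight is $|\mathbf{a}| = m v_1 \leq s(mD;x)$. Jet separation then delivers a section $s \in \HH{0}{X}{\sO_X(mD)}$ whose local Taylor expansion at $x$ begins with the monomial $u^{\mathbf{a}}$; retracing the chart computation already carried out in the proof of Proposition~\ref{prop:inverted} on $U_n = \{y_n \neq 0\}$ yields $\nu_{\ybul}(\pi^*s) = (|\mathbf{a}|, a_1, \ldots, a_{n-1}) = mv$, so that $v$ is valuative. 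The hard part is the translation from the inverted-simplex inclusion to genuine jet separation, which relies on Corollary~\ref{cor:seshadri inverted} (i.e.\ on Proposition~\ref{prop:jet separation}) together with the full strength of \cite{ELMNP2}*{Proposition 6.6}; once jets are available the rest is bookkeeping. The exclusion of the top face is transparent in this scheme: points on it would require $|\mathbf{a}| = m\xi$, which the supremum characterization $\epsilon(\|D\|;x) = \sup_m s(mD;x)/m$ cannot produce in general when $\xi$ equals $\xi(D;x)$.
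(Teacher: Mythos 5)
Your argument is correct and it does the job, but it takes a somewhat different and more direct route than the paper's. The paper passes through Lemma~\ref{lem:valuative}: it uses the jet-separation statement (coming from Proposition~\ref{prop:seshadri inverted} together with \cite{ELMNP2}*{Proposition~6.6}) only to exhibit sections with prescribed jets $u_i^{k}$, thereby making the vertices $\origin,\ \lambda'\eone,\ \lambda'(\eone+\etwo),\dots,\lambda'(\eone+\en)$ of $\iss{\lambda'}$ ($\lambda'<\xi$ rational) valuative, and then invokes the general principle --- products of sections realize convex combinations of valuation vectors --- to sweep out all rational points of $\iss{\lambda'}$. You short-circuit that intermediate step: for a rational $v$ with $v_1<\xi$ you set $\mathbf{a}=(mv_2,\dots,mv_n,\,m(v_1-\sum_{i\geq 2}v_i))$ and use jet separation to produce a section whose lowest-order term is exactly the single monomial $u^{\mathbf{a}}$, reading off $\nu_{\ybul}(\pi^*s)=(|\mathbf{a}|,a_1,\dots,a_{n-1})=mv$ directly from the chart computation of Proposition~\ref{prop:inverted}. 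Both proofs feed on the same engine --- Corollary~\ref{cor:seshadri inverted} identifying $\xi(D;x)$ with $\e(\|D\|;x)$ and ELMNP's asymptotic jet-separation formula --- but your version needs no appeal to Lemma~\ref{lem:valuative} and is arguably more transparent, since it names the section realizing each point. Two small points you should spell out: (i) the non-negativity of the last entry of $\mathbf{a}$ is precisely the defining inequality $v_1\geq\sum_{i\geq 2}v_i$ of $\iss{\xi}$, which is worth stating; and (ii) the existence of $m$ divisible by the common denominator of $v$ with $s(mD;x)\geq mv_1$ follows from the fact that $s(\,\cdot\,;x)$ is superadditive, hence $\lim_m s(mD;x)/m$ exists and equals $\e(\|D\|;x)\geq\xi>v_1$, so the inequality holds for \emph{all} sufficiently large $m$ in the relevant arithmetic progression --- you gesture at this but a reader would want the one-line justification.
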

\begin{proof}
This is a consequence of Lemma~\ref{lem:valuative}, the inequality
\[
\lim_{k\rightarrow\infty}\frac{s(kD;x)}{k} \ \geq \ \lambda \ ,
\]
and the definition of jet separation constants. One can see by the proof of Proposition~\ref{prop:seshadri inverted}  all  vectors with rational coordinates sitting 
on one of the rays coming out of the origin in the inverted simplex come from a basis for some power of the maximal ideal of  $x$.
\end{proof}

\section{The extended Seshadri function}

In this section we briefly discuss an extension of moving Seshadri constants completing in some ways the picture considered in \cite{ELMNP2}. We also give an example where the Seshadri constant function inside the ample cone is not everywhere differentiable.

First, recall the notion of asymptotic multiplicity: for a point $x\in X$ on a smooth projective variety $X$, the \textit{asymptotic multiplicity}
of a big $\RR$-divisor $D$ is defined as 
\[
\asym{x}{D}\ \deq \ \inf_{D'} \st{\mult_x(D')} \ ,
\]
where the minimum is over all effective $\RR$-divisors with $D'\equiv D$ (see  \cite{ELMNP1} for the general theory).

Note that $\asym{x}{D}>0$ precisely when $x\in\Bminus(D)$ by \cite{ELMNP1}*{Proposition 2.9}; in contrast with the various  largest simplex constants and 
the geometric definition of the moving Seshadri constant, $\asym{x}{D}$ concerns the situation when the point $x\in\Bminus(D)$. 
Our goal is to see this invariant through the eyes of infinitesimal Newton--Okounkov bodies, and use this relation to connect asymptotic multiplicities to moving Seshadri constants. 

\begin{proposition}
Let $D$ be an $\RR$-divisor on $X$, $x\in \Bminus(D)$, and denote by  $r\deq\asym{x}{D}$. Then for any infinitesimal flags $Y_{\bullet}$ over the point $x$, the following hold
 \begin{enumerate}
\item $\inob{\ybul}{D}\subseteq r\cdot \eone\ + \ \RR^n_+$. In particular,  $E\subseteq \Bplus(\pi^*(D)-rE)$.
\item  $E\nsubseteq \Bminus(\pi^*(D)-rE)$. In particular $\inob{\ybul}{D}\cap \{r\}\times\RR^{n-1}\neq \varnothing$.
\item The intersection $\inob{\ybul}{D}\cap \{r\}\times\RR^{n-1}$ has  empty interior in $\RR^{n-1}$. 
\end{enumerate}
\end{proposition}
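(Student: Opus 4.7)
Set $r\deq\asym{x}{D}$. Under the smoothness of $x$, this equals $\mult_E\|\pi^*D\|$ by the identification given in Section~1, and $\mu\deq\mu(D;x)>r$, since otherwise $\inob{\ybul}{D}$ would collapse into the hyperplane $\{x_1=r\}$, contradicting the bigness of $\pi^*D$. In particular $\pi^*D-rE$ is big, so the statement $E\subseteq \Bplus(\pi^*D-rE)$ is substantive. Part (1) is then immediate from Proposition~\ref{prop:definition}: the first coordinate of $\nu_{\ybul}(D'')$ for any effective $D''\equiv\pi^*D$ is $\mult_E(D'')\geq \mult_E\|\pi^*D\|=r$. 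For (2), I would use the standard identity $\mult_E\|L-tE\|=\mult_E\|L\|-t$, valid for $0\leq t\leq \mult_E\|L\|$ (one direction: adding $tE$ to an effective representative of $L-tE$; the other: subtracting $tE$ from a representative of $L$ of minimal $\mult_E$). Applied with $L=\pi^*D$ and $t=r$ it gives $\mult_E\|\pi^*D-rE\|=0$, so $E\not\subseteq \Bminus(\pi^*D-rE)$ by \cite{ELMNP1}*{Proposition~2.8}; the ``in particular'' of (2) is automatic from compactness of $\inob{\ybul}{D}$, whose first-coordinate infimum $r$ is attained.

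The technical heart is (3), together with the ``in particular'' of (1), which I plan to deduce simultaneously from the $C^1$-regularity of the volume function on the big cone of $X'$ (Boucksom--Favre--Jonsson). Define $f(s)\deq\vol(\pi^*D-sE)$. For $s\in[0,r]$, Proposition~\ref{prop:compute} shows that $\nob{\ybul}{\pi^*D-sE}$ is the pure translate $\inob{\ybul}{D}-s\eone$ (since the body is already contained in $\{x_1\geq r\geq s\}$), so $f$ is constant on $[0,r]$ and its left derivative at $s=r$ vanishes. For $s\in[r,\mu)$, the same proposition together with Fubini gives
\[
 f(s) \equ n!\int_s^{\mu}A(t)\,dt, \qquad A(t)\deq \vol_{\RR^{n-1}}\!\bigl(\inob{\ybul}{D}\cap\{x_1=t\}\bigr)\ ,
\]
whose right derivative at $s=r$ is $-n!\,A(r^+)$. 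The $C^1$-regularity of $f$ at $s=r$ (available since $\pi^*D-rE$ is big) matches the two one-sided derivatives, forcing $A(r^+)=0$; since $A^{1/(n-1)}$ is concave on $[r,\mu]$ by Brunn--Minkowski, hence continuous on the closed interval, one upgrades this to $A(r)=0$, which is precisely (3). Reading the same right derivative through the Boucksom--Favre--Jonsson formula as $-n\,\vol_{X'|E}(\pi^*D-rE)$ yields $\vol_{X'|E}(\pi^*D-rE)=0$, equivalent to $E\subseteq \Bplus(\pi^*D-rE)$ via \cite{ELMNP1}; this settles the ``in particular'' of (1).

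The main obstacle is orchestrating the two inputs above: the $C^1$-regularity of the volume function at the boundary divisor $\pi^*D-rE$, and the endpoint continuity of the slice function $A$ at $s=r$ (as opposed to mere upper semi-continuity). Together they convert the qualitative mismatch between the left and right behaviour of $f$ at $s=r$ into the precise vanishings $A(r)=0$ and $\vol_{X'|E}(\pi^*D-rE)=0$, which simultaneously give (3) and the augmented base locus assertion in (1).
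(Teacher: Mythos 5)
Your argument is correct, but it is organized around a genuinely different set of tools than the paper's. For part~(2) you compute $\mult_E\|\pi^*D-rE\|=0$ directly from the subtraction identity $\mult_E\|L-tE\|=\mult_E\|L\|-t$, $0\le t\le \mult_E\|L\|$, and then invoke \cite{ELMNP1}*{Proposition~2.8}; the paper instead works with the strictly bigger twists $\pi^*D-(r+t)E$ for $0<t\ll 1$, takes a very general $z\in E$, and passes to the limit using the openness of $B_-(z)$ from Proposition~\ref{prop:openclosed}. Both are fine; yours is shorter.

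The real divergence is in part~(3) together with the ``in particular'' of~(1). You build everything on the $C^1$-regularity of $s\mapsto\vol(\pi^*D-sE)$ at $s=r$ (Boucksom--Favre--Jonsson), pinning the right-hand derivative to the vanishing left-hand one and reading off both $A(r^+)=0$ and $\vol_{X'|E}(\pi^*D-rE)=0$ from the derivative formula. The paper instead goes through $\origin\notin\nob{\ybul'}{\pi^*D-tE}$ for $t<r$ combined with \cite{KL15}*{Theorem~A} and closedness of $B_+(z)$ for the ``in particular'' of~(1), and then for~(3) combines the strict volume decrease $\vol(\pi^*D-rE)>\vol(\pi^*D-(r+t)E)$, \cite{FKL}*{Theorem~A} to get $E\nsubseteq\Bplus(\pi^*D-(r+t)E)$ for small $t>0$, the slicing theorem \cite{LM}*{Theorem~4.24}, and finally \cite{ELMNP2}*{Theorem~5.7} on the vanishing of restricted volumes along components of $\Bplus$. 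Your route is slicker in that it settles~(1) and~(3) in one stroke, but it relies on BFJ differentiability, a heavier external input that the paper deliberately avoids (it only uses the one-sided behaviour captured by \cite{ELMNP2}*{Theorem~5.7}, not full $C^1$-regularity). One small imprecision: concavity of $A^{1/(n-1)}$ on $[r,\mu]$ gives lower semicontinuity from the interior at the endpoint, $A(r)\le\liminf_{t\to r^+}A(t)$, not continuity outright; that one-sided inequality is all you use, so the conclusion $A(r)=0$ still goes through, but the phrase ``hence continuous on the closed interval'' should be replaced by the semicontinuity statement (or by the geometric fact that slice volumes of a compact convex body vary continuously up to the boundary of the projection).
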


\begin{proof}
$(1)$ As $x\in\Bminus(D)$, the asymptotic multiplicity $r=\asym{x}{D}$ is strictly positive. By the definition of asymptotic multiplicity coupled with the fact that $\mult_x(D') = \ord_E(\pi^*(D')) = \nu_1(\pi^*(D'))$ 
for any effective $\RR$-divisor $D'\equiv D$ we obtain, $\Delta_{Y_{\bullet}}(\pi^*(D))\subseteq r\cdot\eone+\RR_+^n$.

Take an arbitrary point $z\in E$ and an infinitesimal flag $\ybul$ centered at $z\in X'$. Proposition~\ref{prop:compute} implies that 
$\origin\notin\nob{\ybul}{\pi^*D-tE}$  for $0\leq t<r$. Then $z\in\Bminus(\pi^*D-tE)\subseteq\Bplus(\pi^*D-tE)$ follows from \cite{KL15}*{Theorem~A} 
for all $0\leq t<r$. Using Proposition~\ref{prop:openclosed}.(i), then we know that $B_+(z)$ is closed in the big cone and in particular this yields that $z\in\Bplus(\pi^*D-rE)$ as well. 
 
$(2)$ Observe that $\pi^*D-rE$ is big (it has the same volume as $D$ has), therefore $\pi^*D-(r+t)E$ is big for all $0<t\ll 1$. By the definition of asymptotic multiplicity, $\asym{E}{\pi^*D-(r+t)E}=0$ for all $0<t\ll 1$, in particular $E\nsubseteq \Bminus(\pi^*D-(r+t)E)$. But then $z\notin \Bminus(\pi^*D-(r+t)E$ for all rational values  $0<t<\ll 1$ provided  $z\in E$ is very general. Now, making use of Proposition~\ref{prop:openclosed}.(ii) we know that $\textup{Big}(X')_{\RR}\setminus B_{-}(z)$ is closed. In particular, this yields that $z\notin\Bminus(\pi^*D-rE)$. 
 
$(3)$ Let us first point out that $E\nsubseteq \Bplus(\pi^*(D)-(r+t)E)$ for any $0<t\ll 1$. To see this, recall that by $(2)$ above,  $\inob{\ybul}{D}\cap \{r\}\times\RR^{n-1}\neq\varnothing$. 
Second, $\inob{\ybul}{D}$ is a full dimensional convex body, therefore
\[
\vol_{X'}(\pi^*(D)-rE) \ > \ \vol_{X'}(\pi^*(D)-(t+r)E) \ ,
\]
by  \cite{LM}*{Theorem~A}. But then  \cite{FKL}*{Theorem~A} gives $E\nsubseteq \Bplus(\pi^*(D)-(r+t)E)$ for any $0<t\ll 1$.

To finish the proof, suppose for a contradiction that  
\[
 \vol_{\RR^{n-1}}(\inob{\ybul}{D}\cap\{r\}\times\RR^{n-1}) \ >\ 0\ .
\] 
By  the slicing theorem \cite{LM}*{Theorem~4.24} and the fact that $E\nsubseteq \Bplus(\pi^*(D)-(r+t)E)$ for any $0<t\ll 1$, we obtain 
\[
\lim_{t\rightarrow0} \Big(\textup{vol}_{X'|E}\big(\pi^*(D)-(t+r)E\big)\Big) \ \geq \ \textup{vol}_{\RR^{n-1}}\Big(\inob{\ybul}{D}\cap\{r\}\times\RR^{n-1}\Big) \ > \ 0 \ .
\]
On the other hand,  \cite{ELMNP2}*{Theorem 5.7} forces the limit on the left-hand side to be zero,  since $E$ is an irreducible component of  $\Bplus(\pi^*(D)-rE)$ by $(1)$, a 
contradiction.
\end{proof}

\begin{lemma}\label{lem:curious}
Let $(D_k)_{k\in\NN}$ be a sequence of big $\RR$-divisors on a smooth projective variety $X$  converging to a big $\RR$-divisor $D$, 
let $x\in X$ be a  point. Then
\begin{enumerate}
\item  If $\epsilon(||D_k||;x)>0$ for all $k\in\NN$, and $\lim_{k\rightarrow\infty}\epsilon(||D_k||;x)=0$, then $x\in\Bplus(D)\setminus\Bminus(D)$.
\item If $\mult_x(||D_k||)>0$ for all $k\in\NN$, and $\lim_{k\rightarrow\infty}\asym{x}{D_k}=0$, then $x\in \Bplus(D)\setminus\Bminus(D)$.
\end{enumerate}
\end{lemma}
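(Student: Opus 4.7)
The plan is to combine continuity of the relevant asymptotic invariants on the big cone with the semicontinuity of augmented and restricted base loci recorded in Proposition~\ref{prop:openclosed}. In both parts of the lemma one invariant will certify membership in one of $\Bplus(D)$, $\Bminus(D)$ via a continuity/limit argument, while the topological statement from Proposition~\ref{prop:openclosed} will handle the complementary containment.

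For part~$(1)$, the hypothesis $\e(\|D_k\|;x)>0$ forces $x\notin\Bplus(D_k)\supseteq\Bminus(D_k)$ for every $k$. Since $B_{-}(x)$ is open in $N^1(X)_\RR$ by Proposition~\ref{prop:openclosed}.(2), its complement is closed, and passing to the limit $D_k\to D$ yields $x\notin\Bminus(D)$. For the other containment I would invoke continuity of the extended moving Seshadri constant $\e(\|\cdot\|;x)$ on $N^1(X)_\RR$, i.e.\ \cite{ELMNP2}*{Theorem~6.2}, to deduce $\e(\|D\|;x)=\lim_k \e(\|D_k\|;x)=0$. Since $D$ is big and the moving Seshadri constant of a big class is strictly positive at any point outside its augmented base locus, this equality forces $x\in\Bplus(D)$.

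For part~$(2)$, the hypothesis $\mult_x\|D_k\|>0$ places $x\in\Bminus(D_k)\subseteq\Bplus(D_k)$ by \cite{ELMNP1}*{Proposition~2.9}. As $B_{+}(x)$ is closed by Proposition~\ref{prop:openclosed}.(1), passing to the limit yields $x\in\Bplus(D)$. For the complementary containment I would appeal to continuity of the asymptotic multiplicity function $\alpha\mapsto \asym{x}{\alpha}$ on the big cone, a fact going back to Nakayama (see also \cite{ELMNP1}): combining this with $\asym{x}{D_k}\to 0$ gives $\asym{x}{D}=0$, and a second application of \cite{ELMNP1}*{Proposition~2.9} concludes $x\notin\Bminus(D)$.

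The main obstacle lies in the citation discipline for part~$(2)$. Whereas the continuity input needed for~$(1)$ is already highlighted in the paper as \cite{ELMNP2}*{Theorem~6.2}, continuity of $\asym{x}{\cdot}$ on the big cone has not been recalled in the earlier sections and must be brought in as external input; a cleaner alternative would be to re-derive it for this single point via the volume-based characterization used in Section~4 combined with \cite{FKL}*{Theorem~A}. Once both continuity statements are granted, the two parts are structurally symmetric: each pairs continuity of an asymptotic invariant detecting one base locus with the topological property of the other base locus from Proposition~\ref{prop:openclosed}.
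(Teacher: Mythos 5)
Your proof is correct. Part (2) is essentially identical to the paper's argument: positivity of $\asym{x}{D_k}$ gives $x\in\Bminus(D_k)\subseteq\Bplus(D_k)$, closedness of $B_+(x)$ gives $x\in\Bplus(D)$, and continuity of the asymptotic multiplicity on the big cone (which the paper does in fact invoke, citing \cite{ELMNP1}*{Theorem~A}, so your worry about ``external input'' is moot) gives $\asym{x}{D}=0$ and hence $x\notin\Bminus(D)$. The only genuine divergence is in part (1), in the step $x\notin\Bminus(D)$: you argue purely topologically, noting that $\e(\|D_k\|;x)>0$ forces $D_k\notin B_+(x)\supseteq B_-(x)$ and then passing to the limit using that $N^1(X)_\RR\setminus B_-(x)$ is closed (Proposition~\ref{prop:openclosed}.(2)). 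The paper instead routes this through its convex-geometric machinery: via Corollary~\ref{cor:seshadri inverted} one has $\Delta^{-1}_{\xi_k}\subseteq\inob{\ybul}{D_k}$ with $\xi_k=\e(\|D_k\|;x)$, so $\origin\in\inob{\ybul}{D_k}$ for all $k$, continuity of Newton--Okounkov bodies puts $\origin$ in $\inob{\ybul}{D}$, and Theorem~\ref{thm:bminus inf} concludes. Your version is shorter and avoids both Corollary~\ref{cor:seshadri inverted} and the slightly delicate limit of Newton--Okounkov bodies at a boundary point; the paper's version has the virtue of staying inside the infinitesimal Newton--Okounkov framework that the section is advertising. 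The remaining step of part (1), deducing $x\in\Bplus(D)$ from $\e(\|D\|;x)=\lim_k\e(\|D_k\|;x)=0$ together with strict positivity of the moving Seshadri constant off $\Bplus$, is exactly what the paper's one-line appeal to continuity means.
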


\begin{proof}
$(1)$ By Corollary~\ref{cor:seshadri inverted} it is legal to write  $\xi_k\deq \xi(||D_k||;x)=\movs{D_k}{x}$ for each $k\in\NN$. Fixing an 
infinitesimal flag $Y_{\bullet}$ over $x$, by definition we have  $\origin \in \Delta_{\xi_k}^{-1}\subseteq\inob{\ybul}{D_k}$. 
By continuity of   Newton--Okounkov bodies we obtain   $\textbf{0}\in\Delta_{Y_{\bullet}}(\pi^*(D))$, we can conclude by  Theorem~\ref{thm:bminus inf}, 
$x\notin\Bminus(D)$. 

On the other hand,  $x\in\Bplus(D)$ follows from the continuity of the moving Seshadri constant as a function on the N\'eron-Severi space. 

$(2)$ Since $\asym{x}{D_k}>0$,  \cite{ELMNP1}*{Theorem B} implies  that $x\in\Bminus(D_k)$ for all $k\in\NN$. But then $x\in \Bplus(D_k)$ for 
all $k\in\NN$ as well, whence $x\in\Bplus(D)$ according to \cite{KL15}*{Proposition 1.2}. For $x\notin\Bminus(D)$ note that asymptotic 
multiplicity is continuous on the big cone (see \cite{ELMNP1}*{Theorem~A}), therefore $\asym{x}{D}=0$, and consequently $x\notin\Bminus(D)$.
\end{proof}

By Corollary~\ref{cor:seshadri inverted}, and Lemma~\ref{lem:curious} one can glue  the functions  $\epsilon(||\ \cdot\ ||;x)$ and $-\asym{x}{\ \cdot\ }$ 
giving rise to a continuous extension of the moving Seshadri constant function which is nowhere zero on the open subset $B_-(x)\subseteq \textup{N}^1(X)_{\RR}$.

\begin{definition}[Extended Seshadri function]\label{defi:Seshadri function}
Let $X$ be a smooth projective variety, $x\in X$. We define the \emph{(extended) Seshadri function} $\e_x\colon\Bbig(X)\to \RR_{\geq  0}$ 
\emph{associated to the point $x\in X$} by 
\[
 \e_x(D) \deq \begin{cases} \movs{D}{x} & \text{ if } D\notin B_+(x) \\
			    0           & \text{ if } D\in B_+(x)\setminus B_-(x)) \\
			    -\asym{x}{D} & \text{ if } D\in B_-(x)\ . 
              \end{cases}
\]
\end{definition}

\begin{remark}
Since both the asymptotic multiplicity and the moving Seshadri constant are concave on  the domain where they are  non-zero, it is not unnatural to hope that 
the extended Seshadri function will retain this property. We shall see that this is indeed the case in the example below. 
\end{remark}

We end this section with an explicit computation of the extended Seshadri function; an interesting feature of the example is that $\e_x$ is not everywhere differentiable even inside the ample cone. 

\begin{example}[A non-differentiable Seshadri function]
Let $p\in \PP^2$ be a point and denote by $\pi_1: X\deq Bl_p(\PP^2)\rightarrow \PP^2$ the blow-up of $\PP^2$ at the point $p$ with  exceptional divisor $E$. 
We pick a point $x\in E$, and then pursue to compute the function $\e_x$ on the pseudo-effective cone $\overline{\Eff}(X) = \RR_+E+\RR_+(H-E)$, 
where $H$ is the pullback of the class of a line. 

The function $\e_x$ being homogeneous of degree one, it will suffice to determine the values of $\e_x$ as we traverse the line segment 
$[E,H-E]\subseteq N^1(X)_\RR \equ \RR^2$. To this end, set 
\[
 F_t \deq  tH + (1-2t)E\ \ \text{ for all $0\leq t\leq 1$.}
\]

Observe that  for  $t\in[0,\frac{1}{2})$  we have $x\in\Bminus(F_t)$, and 
\[
\e_x(F_t) \equ - \asym{x}{tH+(1-2t)E} \equ 2t-1\ .
\]

If $\frac12 \leq t \leq 1$, then $F_t$ is nef, hence $\e_x(F_t) \equ \movs{F_t}{x} \equ \e(F_t;x)$. The  Seshadri constants $\e(F_t;x)$  
are somewhat more complicated  to compute,  this will  take up the remaining part of our  example. 

Thus, let $\pi_2:X'\rightarrow X$ denote the blow-up  of $X$ at the point $x$. Write  $\pi=\pi_2\circ\pi_1$ for the composition of the  two blow-ups. 
On $X'$ we have precisely three negative curves: 
\begin{eqnarray*}
 E_1 & = & \text{the strict transform of the exceptional divisor of $\pi_1$ under $\pi_2$}, \\
 E_2 & = & \text{the exceptional divisor of the blow-up $\pi_2$},\\
 E_3 & = & \text{the strict transform of the line of class $H-E$ on $X$ going through the point $x$}. 
\end{eqnarray*}
The intersection matrix of the curves $E_i$ is 
\[
 (E_i\cdot E_j)_{1\leq i \leq 3,1\leq j\leq 3} \equ \left( \begin{array}{ccc}
                                                            -2 & 1 & 0 \\ 1 & -1 & 1 \\ 0& 1 & -1 
                                                           \end{array}
                                                           \right)\ .
\]
In the basis $(E_1,E_2,E_3)$ of $N^1(X')_\RR$, the hyperplane class $H'$ is given as 
\[
 H' \equ E_1+2E_2+E_3\ .
\]
Along with $H'$, the divisors $H'+E_3$ and $E_2+E_3$ turn out to be nef as well, 
and the three generate the nef cone of $X'$.  In this notation, 
\[
D_t \deq \pi_2^*F_t \equ \pi_2^*(tH+(1-2t)E) \equ tH' + (1-2t)(E_1+E_2) \ ,
\]
which can in turn be written in the form 
\[
 D_t \equ (1-t)H' + (2t-1)(E_2+E_3)\ \ \text{for all $1/2\leq t\leq 1$.}
\]
This means in particular that $D_t$ sits on the face of the nef cone generated by $H'$ and $E_2+E_3$ for all $1/2\leq t\leq 1$. 

As one can check, that  the ray $D_t-\epsilon E_1$  leave the nef cone through the 
 the face generated by the  divisors $H'$ and $H'+E_3$ whenever $t\in 
[\frac{1}{2},\frac{2}{3}]$,  and throught the face generated by the divisors $H'+E_3$ and $E_1+E_3$ for  $t\in[\frac{2}{3},1]$. 

As a result, $\e_x$ is going to be piecewise linear, and  it is not going to be differentiable  
 at $t=\frac{2}{3}$. The full computation goes as follows. 

For $t\in [\frac{1}{2},\frac{2}{3}]$, the ray $D_t-\epsilon E_1$ hits the boundary of the nef cone at $\epsilon=2t-1$, in the divisor
\[
D_t-(2t-1)E_1 \equ  (2-3t)H'+(2t-1)(H'+E_3) \ .
\]
In particular, $\epsilon_x(F_t)=2t-1$ on the interval $[\frac{1}{2},\frac{2}{3}]$. 

On the other hand, if $t\in[\frac{2}{3},1]$, the ray $D_t-\epsilon E_1$ reaches the boundary of the nef cone at $\epsilon=1-t$, in the divisor
\[
D_t-(1-t)E_1 \equ  (1-t)(H'+E_3) + (3t-2)(E_1+E_3) \ , 
\]
and we obtain  $\epsilon_x(t)=1-t$ on the interval $[\frac{2}{3},1]$. Putting all this together, the Seshadri function on the line segment 
$[E,H-E]$ is given by 
\begin{displaymath}
\epsilon_x(F_t) \equ   \left\{ \begin{array}{lll}
2t-1 & \textrm{if $t\in[0,\frac{1}{2}]$}\\
2t-1 & \textrm{if $t\in[\frac{1}{2},\frac{2}{3}]$}\\
1-t & \textrm{if $t\in [\frac{2}{3},1]$} 
\end{array} \right. \ .
\end{displaymath}
\end{example}

%%%%%%%%%%%%%%%%%%%%%%%%%%%%%%%%%%%%%%%%%%%%  End of Text %%%%%%%%%%%%%%%%%%%%%%%%%%%%%%%%

\begin{bibdiv}
\begin{biblist}

\bib{AKL}{article}{
  label={AKL},
  author={Anderson, Dave},
  author={K\"uronya, Alex},
  author={Lozovanu, Victor},
  title={Okounkov bodies of finitely generated divisors},
  journal={International Mathematics Research Notices},
  volume={132},
  date={2013},
  number={5},
  pages={1205--1221}, 
}

\bib{BKS}{article}{
  label={BKS},
  author={Bauer, Thomas},
  author={K\"uronya, Alex},
  author={Szemberg, Tomasz},
  title={Zariski decompositions, volumes, and stable base loci},
  journal={Journal f\"ur die reine und angewandte Mathematik},
  volume={576},
  date={2004},
  pages={209--233},
}

\bib{Bou1}{article}{
  label={B},
  author={Boucksom, S\'ebastien},
  title={Corps D'Okounkov},
  journal={S\'eminaire Bourbaki},
  volume={65},
  date={2012-2013},
  number={1059},
  pages={1--38},
} 

\bib{BCL}{article}{
    label={BCL},
    author={Boucksom, S.},
    author={Cacciola, S.},
    author={Lopez, A. F.},
    title={Augmented base loci and restricted volumes on normal varieties},
   date={2013},
   note={arXiv:1305.4284},
}
   
\bib{CHPW}{article}{
   label={CHPW},
   author={Choi, Sung Rak},
   author={Hyun, Yoonsuk},
   author={Park, Jinhyung},
   author={Won, Joonyeong},
   title={Asymptotic base loci via Okounkov bodies},
   date={2015}, 
   note={arXiv:1506.00817},
}
 
\bib{D}{incollection}{
   label={D},
   author={Demailly, Jean-Pierre},
   title={Singular Hermitian metrics on positive line bundles},
   booktitle={Complex Algebraic Varieties (Bayreuth, 1990)},
   pages={87--104},
   note={Lect. Notes in Math., vol. 1507},
   year={1992},   
}

\bib{ELMNP1}{article}{
   label={ELMNP1},
   author={Ein, Lawrence},
   author={Lazarsfeld, Robert},
   author={Musta{\c{t}}{\u{a}}, Mircea},
   author={Nakamaye, Michael},
   author={Popa, Mihnea},
   title={Asymptotic invariants of base loci},
   journal={Ann. Inst. Fourier (Grenoble)},
   volume={56},
   date={2006},
   number={6},
   pages={1701--1734},
}

\bib{ELMNP2}{article}{
   label={ELMNP2},
   author={Ein, Lawrence},
   author={Lazarsfeld, Robert},
   author={Musta{\c{t}}{\u{a}}, Mircea},
   author={Nakamaye, Michael},
   author={Popa, Mihnea},
   title={Restricted volumes and base loci of linear series},
   journal={Amer. J. Math.},
   volume={131},
   date={2009},
   number={3},
   pages={607--651},
   doi={10.1353/ajm.0.0054},
}

\bib{FKL}{article}{
   label={FKL},
   author={Fulger, Mihai},
   author={Koll\'ar, J\'anos},
   author={Lehmann, Brian},
   title={Volume and Hilbert function of $\RR$-divisors},
   date={2015},
   note={arXiv:1503.02324},
}

\bib{Jow}{article}{
   label={J},
   author={Jow, Shin-Yao},
   title={Okounkov bodies and restricted volumes along very general curves},
   journal={Adv. Math.},
   volume={223},
   date={2010},
   number={4},
   pages={1356--1371},
   doi={10.1016/j.aim.2009.09.015},
}

\bib{KKh}{article}{
   label={KKh},
   author={Kaveh, Kiumars},
   author={Khovanskii, A. G.},
   title={Newton-Okounkov bodies, semigroups of integral points, graded
   algebras and intersection theory},
   journal={Ann. of Math. (2)},
   volume={176},
   date={2012},
   number={2},
   pages={925--978},
   doi={10.4007/annals.2012.176.2.5},
}

\bib{Kur10}{article}{
   label={K},
   author={K\"uronya, Alex},
   title={Positivity of restrictions on subvarieties and vanishing of higher cohomology}, 
   journal={Annales de l'Institut Fourier},
   volume={63},
   date={2013},
   number={5},
   pages={1717--1737},
}

\bib{KL14}{article}{
   label={KL14},
   author={K\"{u}ronya, Alex},
   author={Lozovanu, Victor},
   title={Local positivity of linear series on surfaces},
   note={arXiv:1411.6205}
}

\bib{KL15}{article}{
   label={KL15},
   author={K\"{u}ronya, Alex},
   author={Lozovanu, Victor},
   title={Positivity of line bundles and Newton--Okounkov bodies},
   note={arXiv:1506.06525},
}

\bib{KLM1}{article}{
   label={KLM},
   author={K{\"u}ronya, Alex},
   author={Lozovanu, Victor}, 
   author={Maclean, Catriona},
   title={Convex bodies appearing as Okounkov bodies of divisors},
   journal={Adv. Math.},
   volume={229},
   date={2012},
   number={5},
   pages={2622--2639},
   doi={10.1016/j.aim.2012.01.013},
}

\bib{LM}{article}{
   label={LM},
   author={Lazarsfeld, Robert},
   author={Musta{\c{t}}{\u{a}}, Mircea},
   title={Convex bodies associated to linear series},
   journal={Ann. Sci. \'Ec. Norm. Sup\'er. (4)},
   volume={42},
   date={2009},
   number={5},
   pages={783--835},
}

\bib{LSS}{article}{
   label={LSS},
   author={Patrycja \L uszcz-\'Swidecka}, 
   author={David Schmitz},
   title={Minkowski decomposition of Okounkov bodies on surfaces},
    JOURNAL = {J. Algebra},
    VOLUME = {414},
      YEAR = {2014},
     PAGES = {159--174},
    %  ISSN = {0021-8693},
   %    DOI = {10.1016/j.jalgebra.2014.05.024},
}

\bib{Nakamaye}{article}{
   label={N},
   author={Nakamaye, Michael},
   title={Base loci of linear series are numerically determined},
   journal={Trans. Amer. Math. Soc.},
   volume={355},
   date={2002},
   pages={551--566},
}

\bib{PAGI}{book}{
  label={PAG1},
  author={Lazarsfeld, Robert},
  title={Positivity in algebraic geometry. I},
  series={Ergebnisse der Mathematik und ihrer Grenzgebiete. 3. Folge.},
  volume={48},
  publisher={Springer-Verlag},
  place={Berlin},
  date={2004},
}

\bib{PAGII}{book}{
  label={PAG2},
  author={Lazarsfeld, Robert},
  title={Positivity in algebraic geometry. II},
  series={Ergebnisse der Mathematik und ihrer Grenzgebiete. 3. Folge.},
  volume={49},
  publisher={Springer-Verlag},
  place={Berlin},
  date={2004},
}

\bib{Ok}{article}{
   label={O},
   author={Okounkov, Andrei},
   title={Brunn-Minkowski inequality for multiplicities},
   journal={Invent. Math.},
   volume={125},
   date={1996},
   number={3},
   pages={405--411},
 %  doi={10.1007/s002220050081},
}

\bib{PSU}{article}{
 label={PSU},
 author={Pokora, Piotr},
 author={Schmitz, David},
 author={Urbinati, Stefano},
 title={Minkowski bases on toric varieties},
 date={2014},
}

\bib{Roe}{article}{
 label={R},
 author={Ro\'e, Joaquim},
 title={Local positivity in terms of Newton--Okounkov bodies},
 note={arXiv:1505.02051},
 date={2015},
}

\end{biblist}
\end{bibdiv}

\raggedright

\end{document}